\documentclass[preprint,times]{article}
\usepackage{amsmath,mathrsfs,amssymb,amsthm,amscd}
\usepackage[]{fontenc}
\usepackage{xy}
\usepackage[hyperindex]{hyperref}
\hypersetup{breaklinks=true}
\usepackage{graphicx,color}
\usepackage{ifpdf}
\xyoption{all}


\numberwithin{equation}{section}
\nopagebreak%
\theoremstyle{plain}
\newtheorem{theorem}[equation]{Theorem}
\newtheorem{corollary}[equation]{Corollary}
\newtheorem{proposition}[equation]{Proposition}
\newtheorem{lemma}[equation]{Lemma}

\newtheorem{theorem-definition}[equation]{Theorem Definition}

\theoremstyle{definition}
\newtheorem{definition}[equation]{Definition}
\newtheorem{notation}[equation]{Notation}
\newtheorem{example}[equation]{Example}

\newtheorem{remark}[equation]{Remark}

\DeclareMathOperator{\Gr}{Gr}

\DeclareMathOperator{\Fil}{Fil}

\DeclareMathOperator{\ocone}{\overline{cone}}

\DeclareMathOperator{\Td}{Td}
\DeclareMathOperator{\cht}{\widetilde{ch}}

\DeclareMathOperator{\Tot}{Tot}

 \DeclareMathOperator{\dd}{d}

\DeclareMathOperator{\ch}{ch} 
\DeclareMathOperator{\rk}{rk} 
 
\DeclareMathOperator{\Ob}{Ob}

\DeclareMathOperator{\Hom}{Hom} 
\DeclareMathOperator{\uHom}{\underline{Hom}}
 
\DeclareMathOperator{\Id}{id}

\DeclareMathOperator{\Spec}{Spec}

\DeclareMathOperator{\cone}{cone}

\DeclareMathOperator{\Cb}{\mathbf{C}^{b}}
\DeclareMathOperator{\KA}{\mathbf{KA}}
\DeclareMathOperator{\oV}{\overline {\mathbf{V}}^{b}}
\DeclareMathOperator{\oVo}{\overline {\mathbf{V}}^{0}}
\DeclareMathOperator{\Vb}{\mathbf{V}^{b}}

\DeclareMathOperator{\Db}{\mathbf{D}^{b}}

\DeclareMathOperator{\oDb}{\overline{\mathbf{D}}^{b}}
\DeclareMathOperator{\oSm}{\overline{\mathbf{Sm}}}
\DeclareMathOperator{\Sm}{\mathbf{Sm}}
\DeclareMathOperator{\ocF}{\overline{\mathcal{F}}}
\DeclareMathOperator{\Coh}{\mathbf{Coh}}

\DeclareMathOperator{\even}{even}
\DeclareMathOperator{\odd}{odd}

\DeclareMathOperator{\im}{Im\,}
\renewcommand{\Im}{{\im}}

\newcommand{\ov}{\overline}
\newcommand{\dra}{\dashrightarrow}
\newcommand{\Ld}{}
\newcommand{\Rd}{}
\newcommand{\RuHom}{\Rd\uHom}
\newcommand{\Lotimes}{\otimes^{\Ld}}

\newcommand{\an}{\text{{\rm an}}}

\newcommand{\ZZ}{{\mathbb Z}}
\newcommand{\RR}{{\mathbb R}}

\newcommand{\CC}{{\mathbb C}}

\newcommand{\PP}{{\mathbb P}}
\newcommand{\DD}{{\mathbb D}}

\def\?{\ ???\ \immediate\write16{}%
\immediate\write16{Warning: There was still a question mark . . . }%
\immediate\write16{}}

\pagestyle{plain}
\begin{document}

\title{Hermitian
  structures on the derived category of coherent sheaves}
\author{
\\
\\
	Jos\'e I. Burgos Gil\footnote{Partially supported by grant
          MTM2009-14163-C02-01 and CSIC research project 2009501001.}\\ 
	\small{Instituto de Ciencias Matem\'aticas (ICMAT-CSIC-UAM-UCM-UC3)}\\
	\small{Consejo Superior de Investigaciones Cient\'ificas (CSIC)}\\
	\small{Spain}\\
	\small{\texttt{burgos@icmat.es}}\\
	\and
	Gerard Freixas i Montplet\footnote{Partially supported by
          grant MTM2009-14163-C02-01}\\
	\small{Institut de Math\'ematiques de Jussieu (IMJ)}\\
	\small{Centre National de la Recherche Scientifique (CNRS)}\\
	\small{France}\\
	\small{\texttt{freixas@math.jussieu.fr}}\\
	\and
	R\u azvan Li\c tcanu\footnote{Supported by CNCSIS -UEFISCSU,
          project number PNII - IDEI 2228/2008 .}\\ 
	\small{Faculty of Mathematics}\\
	\small{University Al. I. Cuza Iasi}\\
	\small{Romania}\\
	\small{\texttt{litcanu@uaic.ro}}
}

\maketitle
\begin{abstract}
  The main objective of the present paper is to set up the theoretical
  basis and the language needed to deal with the problem of direct
  images of hermitian vector bundles for projective non-necessarily
  smooth morphisms. To this end, we first define hermitian structures on the
  objects of the bounded derived category of coherent sheaves on a
  smooth complex variety. Secondly we extend the theory of Bott-Chern
  classes to these hermitian structures. Finally we introduce the
  category $\oSm_{\ast/\CC}$ whose morphisms are projective morphisms
  with a hermitian structure on the relative tangent complex.
\end{abstract}

\section{Introduction}
\label{sec:introduction}

Derived categories were introduced in the 60's of the last century by
Grothendieck and Verdier in order to study and generalize duality
phenomenons in Algebraic Geometry (see \cite{Hartshorne:rd},
\cite{Verdier:MR1453167}). Since
then, derived categories had become a standard tool in Algebra and Geometry
and the right framework to define derived functors and to study
homological properties. A paradigmatic example is the definition of
direct image of sheaves. Given a map $\pi\colon X\to Y$ between
varieties and a sheaf $\mathcal{F}$ on $X$, there is a notion of
direct image $\pi _{\ast}\mathcal{F}$. We are not specifying what kind
of variety or sheaf we are talking about because the same circle of
ideas can be used in many different settings. This direct image is not
exact in the sense that if $f\colon \mathcal{F}\to \mathcal{G}$ is a
surjective map of sheaves, the induced morphism $\pi _{\ast}f\colon
\pi _{\ast}\mathcal{F}\to \pi _{\ast}\mathcal{G}$ is not necessarily
surjective. One then can define a \emph{derived} functor $R\pi
_{\ast}$ that takes values in the derived category of sheaves on $Y$
and that is exact in an appropriate sense. This functor encodes a lot
of information about the topology of the fibres of the map $\pi $.

The interest for the derived category of coherent sheaves on a variety
exploded with the celebrated 1994 lecture by Kontsevich
\cite{Kontsevich:MR1403918}, interpreting
mirror symmetry as an equivalence between the derived category of the
Fukaya category of certain symplectic manifold and the derived category of
coherent sheaves of a dual complex manifold. In the last decades, many
interesting results about the derived category of coherent sheaves
have been obtained, like Bondal-Orlov
Theorem \cite{BondalOrlov:MR1818984} that shows that a projective
variety with ample canonical or anti-canonical bundle can be recovered
from its derived category of coherent sheaves. Moreover, new tools for
studying algebraic varieties have been developed in the context of
derived categories like the
Fourier-Mukai transform \cite{Mukai:FT}. The interested reader is
referred to books like \cite{Huybrechts:FM} and
\cite{Bartoccietal:MR2511017} for a thorough exposition of recent
developments in this area.

Hermitian vector bundles are ubiquitous in Mathematics. An
interesting problem is to define the direct image of hermitian vector
bundles. More concretely, let $\pi \colon X\to Y$ be a proper
holomorphic map of complex manifolds and let $\ov E=(E,h)$ be a
hermitian holomorphic vector bundle on $X$. We would like to define
the direct image
$\pi_{\ast}\ov E$ as something as close as possible to a hermitian
vector bundle on $Y$. The information that would be easier to extract from
such a direct image 
is encoded in the determinant of the cohomology
\cite{Deligne:dc}, that can be defined directly. Assume that $\pi $ is a 
submersion and that we have chosen a hermitian metric on the relative
tangent bundle $T_{\pi }$ of $\pi $ satisfying certain technical
conditions. Then the determinant line bundle $\lambda
(E)=\det(R\pi _{\ast}E)$ can be equipped  with the Quillen metric
(\cite{Quillen:dCRo}, \cite{BismutFreed:EFI}, \cite{BismutFreed:EFII}), that
depends on the metrics on $E$ and $T_{\pi }$ and is constructed using
the analytic torsion \cite{RaySinger:ATCM}. The Quillen metric has
applications in Arithmetic Geometry (\cite{Faltings:cas}, \cite{Deligne:dc},
\cite{GilletSoule:aRRt}) and also in String Theory
(\cite{Yau:MR915812}, \cite{AlvarezGaumeetals:MR908551}). Assume
furthermore that the higher direct image sheaves $R^{i}\pi _{\ast}E$
are locally free. In general it is not possible to define an
analogue of the Quillen metric as a hermitian metric on each vector
bundle $R^{i}\pi
_{\ast}E$. But following Bismut and K\"ohler \cite{Bismut-Kohler}, one
can do something almost as good. We can define the $L^{2}$-metric on $R^{i}\pi
_{\ast}E$ and \emph{correct} it using the higher analytic torsion
form. Although this \emph{corrected metric} is not properly a
hermitian metric, it is enough for constructing
characteristic forms and it appears in the Arithmetic
Grothendieck-Riemann-Roch
Theorem in higher degrees~\cite{GilletRoesslerSoule:_arith_rieman_roch_theor_in_higher_degrees_p}.

The main objective of the present paper is to set up the theoretical
basis and the language needed to deal with the problem of direct
images of hermitian vector bundles for projective non-necessarily
smooth morphisms.
This program will be continued in the
subsequent paper \cite{BurgosFreixasLitcanu:GenAnTor} where we
give an axiomatic characterization of analytic torsion forms and we
generalize them to projective morphisms.
The ultimate goal of this program is to state and prove an Arithmetic
Grothendieck-Riemann-Roch Theorem for general projective
morphisms. This last result will be the topic of a forthcoming paper.

When dealing with direct images of hermitian vector bundles for non
smooth morphisms, one is naturally
led to consider hermitian structures on objects of the bounded derived
category of coherent sheaves $\Db$. One reason for this is that, for
a non-smooth projective morphism $\pi $, instead
of the relative tangent
bundle one should consider the relative tangent complex, that defines
an object of $\Db(X)$. Another reason is that, in general, the higher
direct images $R^{i}\pi _{\ast}E$ are coherent sheaves and the derived
direct image $R\pi _{\ast}E$ is an object of $\Db(Y)$.

Thus the first goal of this paper is to define hermitian structures.
A possible starting point is to
define a hermitian metric on an object
$\mathcal{F}$ of $\Db(X)$ as an isomorphism $E\dra \mathcal{F}$ in
$\Db(X)$, with $E$ a bounded complex of vector bundles, together
with a choice of a hermitian metric on each constituent vector bundle
of $E$.
Here we find a problem, because even being $X$ smooth,
in the bounded derived category of
coherent sheaves of $X$, not every object can be
represented by a bounded complex of locally free sheaves (see
\cite{Voisin:counterHcK} and Remark \ref{rem:3}). Thus the previous
idea does not work for general complex manifolds.
To avoid this
problem we will restrict ourselves to the
algebraic category. Thus, from now on the letters $X,\ Y,\dots$ will
denote smooth algebraic varieties over $\CC$, and all sheaves
will be algebraic.

With the previous definition of hermitian metric, for each object of
$\Db(X)$ we obtain a class of
metrics that is too wide. Different constructions
that ought to produce the same metric produce in fact different
metrics. This indicates that we may define a hermitian structure as an equivalence
class of hermitian metrics.

Let us be more precise. Being $\Db(X)$ a triangulated category, to
every morphism
$\mathcal{F}\overset{f}{\dra}\mathcal{G}$ in $\Db(X)$ we can associate
its cone, that
is defined up to a (not unique) isomorphism by the fact that
\begin{displaymath}
  \mathcal{F}\dra \mathcal{G}\dra \cone(f)\dra \mathcal{F}[1]
\end{displaymath}
is a distinguished triangle. If now $\mathcal{F}$ and $\mathcal{G}$
are provided with hermitian metrics, we want that $\cone(f)$ has an
induced hermitian structure that is well defined up to
\emph{isometry}. By choosing a representative of the map $f$ by means
of morphisms of complexes of vector bundles, we can induce a hermitian metric on
$\cone(f)$, but this hermitian metric depends on the choices. The idea
behind the definition of hermitian structures is to introduce the
finest equivalence relation between metrics such that all possible
induced hermitian metrics on $\cone(f)$ are equivalent.

Once we have defined hermitian structures a new invariant of $X$
can be naturally defined. Namely, the set of hermitian structures on a zero object of
$\Db(X)$ is an abelian group that we denote $\KA(X)$ (Definition
\ref{def:KA}). In the same way that $K_{0}(X)$ is the universal
abelian group for additive characteristic classes of vector bundles,
$\KA(X)$ is the universal abelian group for secondary characteristic
classes of acyclic complexes of hermitian vector bundles (Theorem~\ref{thm:8}).

Secondary characteristic classes constitute other of the central topics of
this paper.  Recall that to each vector bundle we can associate its
Chern character, that is an additive characteristic class. If the
vector bundle is provided with a hermitian metric, we can use
Chern-Weil theory to construct a
concrete representative of the Chern character, that is a differential
form. This characteristic
form is additive only for orthogonally split short exact sequences and
not for general short exact sequences. Bott-Chern classes were
introduced in \cite{BottChern:hvb} and are secondary classes that
measure the lack of additivity of the characteristic forms.

The Bott-Chern classes have been extensively used in Arakelov Geometry
(\cite{GilletSoule:vbhm}, \cite{BismutGilletSoule:at}) and they can be
used to construct characteristic classes in higher $K$-theory
(\cite{Burgos-Wang}).
The second goal of this paper is to
extend the definition of
additive Bott-Chern
classes to the derived category. This is the most general definition
of additive Bott-Chern classes and encompasses both, the Bott-Chern
classes defined in \cite{BismutGilletSoule:at} and the ones defined in
\cite{Ma:MR1765553} (Example \ref{exm:1}).

Finally, recall that the hermitian structure on the direct image of a
hermitian vector bundle should also depend on a hermitian structure on
the relative tangent complex. Thus the last goal of this paper is to
introduce the category  $\ov{\Sm}_{\ast/\CC}$ (Definition
\ref{def:16}, Theorem \ref{thm:17}). The objects of this category are smooth
algebraic varieties over $\CC$ and the morphisms are pairs
$\ov{f}=(f,\ov{T}_{f})$ formed by a
projective morphism of smooth complex varieties $f$, together with a
hermitian structure on the relative tangent complex $T_{f}$. The main
difficulty here is to define the composition of two such morphisms.
 The remarkable
fact is that the hermitian cone construction enables us to define a
composition
rule for these morphisms.

We describe with more detail the contents of each section.

In Section \ref{sec:meager-complexes} we
define and characterize the notion of \textit{meager complex}
(Definition \ref{def:9} and Theorem \ref{thm:3}). Roughly speaking,
meager complexes are bounded acyclic complexes of hermitian vector
bundles whose Bott-Chern classes vanish for structural
reasons. We then introduce the concept of tight morphism (Definition
\ref{def:tight_morphism}) and tight equivalence relation (Definition
\ref{def:17}) between bounded complexes of hermitian vector
bundles. We explain a series of useful computational rules on the
monoid of hermitian vector bundles modulo tight equivalence relation,
that we call \textit{acyclic calculus} (Theorem \ref{thm:7}). We prove
that the submonoid of acyclic complexes modulo meager
complexes has a structure of abelian group, this is the group
$\KA(X)$ mentioned previously.

With these tools at hand, in Section
\ref{sec:oDb} we define hermitian
structures on objects of $\Db(X)$ and we introduce the category
$\oDb(X)$.  The objects of the category
$\oDb(X)$ are objects of $\Db(X)$ together with a hermitian structure,
and the morphisms are just morphisms in $\Db(X)$. Theorem \ref{thm:13}
is devoted to describe the structure of the forgetful functor
$\oDb(X)\to\Db(X)$. In particular, we show that the group $\KA(X)$
acts on the fibers of this functor, freely and
transitively.

An important example of use of hermitian structures is
the construction of the \textit{hermitian cone} of a morphism in
$\oDb(X)$ (Definition \ref{def:her_cone}), which is well defined only
up to tight isomorphism. We also study several elementary
constructions in $\oDb(X)$. Here we mention the classes of
isomorphisms and distinguished triangles in $\oDb(X)$. These classes
lie in the group $\KA(X)$ and their properties are listed in Theorem
\ref{thm:10}. As an application we show that $\KA(X)$ receives classes
from $K_{1}(X)$ (Proposition \ref{prop:K1_to_KA}).

Section \ref{sec:bott-chern-classes} is devoted to the extension of
Bott-Chern classes to the derived category. For every additive
genus, we associate to each isomorphism or
distinguished triangle in $\oDb(X)$ a Bott-Chern class satisfying
properties analogous to the classical ones.

We conclude the paper with Section \ref{sec:multiplicative-genera},
where we extend the definition of Bott-Chern classes to
multiplicative genera and in particular to the Todd genus. In this
section
we also define the category $\ov{\Sm}_{\ast/\CC}$.

\textbf{Acknowledgements:}
We would like to thank the following institutions where part of the
research conducting to this paper was done. The CRM in Bellaterra
(Spain), the CIRM in Luminy (France), the Morningside Institute of Beijing
(China), the University of Barcelona and the IMUB, the Alexandru Ioan
Cuza University of Iasi,
the Institut de Math\'ematiques de Jussieu and the ICMAT (Madrid).

We would also like to thank D. Eriksson and D. R\"ossler for several
discussions on the subject of this paper.

\section{Meager complexes and acyclic calculus}
\label{sec:meager-complexes}

The aim of this section is to construct a universal group for
additive Bott-Chern  classes of acyclic complexes of
hermitian vector bundles. To this end we first introduce and study the
class of meager complexes. Any Bott-Chern class that is
additive for certain short exact sequences of acyclic complexes (see
\ref{thm:8}) and that
vanishes on orthogonally split complexes, necessarily vanishes on
meager complexes. Then we develop an acyclic calculus that will ease
the task to check if a particular complex is meager. Finally we
introduce the group $\KA$, which is the universal group for additive
Bott-Chern classes.

Let $X$ be a complex algebraic variety over $\CC$, namely a reduced
and separated scheme of finite type over $\CC$. We denote by $\Vb(X)$
the exact category of bounded complexes of algebraic vector bundles on
$X$. Assume in addition that $X$ is smooth over $\CC$. Then $\oV(X)$
is defined as the category of pairs $\ov E=(E,h)$, where $E\in \Ob
\Vb(X)$ and $h$ is a smooth hermitian metric on the complex of
analytic vector bundle $E^{\an}$. From now on we shall make no
distinction between $E$ and $E^{\an}$. The complex $E$ will be called
\emph{the underlying complex of } $\ov E$.  We will denote by the
symbol $\sim$ the quasi-isomorphisms in any of the above categories.

A basic construction in $\Vb(X)$ is the cone of a morphism of
complexes. Recall that, if $f\colon E\to F$ is such a morphism, then,
as a graded vector bundle $\cone(f)=E[1]\oplus F$ and the differential
is given by $\dd(x,y)=(-\dd x,f(x)+\dd y).$
We can extend the cone construction easily to $\oV(X)$ as follows.

\begin{definition} \label{def:14}
If $f\colon \overline E\to \overline F$ is a morphism in $\oV(X)$,
\emph{the hermitian cone} of $f$,
denoted by $\ocone(f)$, is defined as the cone of $f$
provided with the orthogonal sum hermitian metric.

When the morphism
is clear from the context we will sometimes
denote $\ocone(f)$ by $\ocone(\ov E,\overline F)$.
\end{definition}

\begin{remark} \label{rem:2} Let $f\colon \ov E\to \ov F$ be a
  morphism in $\oV(X)$. Then
  there is an exact sequence of complexes
  \begin{displaymath}
    0\longrightarrow \overline F\longrightarrow \ocone(f)
    \longrightarrow \overline E[1]\longrightarrow 0,
  \end{displaymath}
  whose constituent short exact sequences are orthogonally
  split. Conversely, if
  \begin{displaymath}
    0\longrightarrow \overline F\longrightarrow \ov G
    \longrightarrow \overline E[1]\longrightarrow 0
  \end{displaymath}
  is a short exact sequence all whose constituent exact sequences are
  orthogonally split, then there is a natural section $s\colon E[1]\to
  G$. The image of $\dd s-s\dd$ belongs to
  $F$ and, in fact, determines a morphism
  of complexes
  \begin{displaymath}
    f_{s}:=\dd s-s\dd\colon \ov E
    \longrightarrow \ov F.
  \end{displaymath}
  Moreover, there is a natural isometry $\overline G \cong \ocone
  (f_{s})$.
\end{remark}

The hermitian cone has the following useful property.

  \begin{lemma}\label{lemm:13}
    Consider a diagram in $\oV(X)$
    \begin{displaymath}
      \xymatrix{
        \overline{E}'\ar[r]^{f'}\ar[d]_{g'} &\overline{F}'\ar[d]^{g}\\
        \overline{E}\ar[r]^{f}	&\overline{F}.
      }
    \end{displaymath}
    Assume that the diagram is commutative up to homotopy and fix a
    homotopy $h$. The homotopy $h$ induces
    morphisms of complexes
    \begin{align*}
      &\psi \colon\ocone(f')\longrightarrow\ocone(f)\\
      &\phi \colon\ocone(-g')\longrightarrow\ocone(g)
    \end{align*}
    and there is a natural isometry of complexes
    \begin{displaymath}
      \ocone(\phi )\overset{\sim}{\longrightarrow}\ocone(\psi ).
    \end{displaymath}
    Morever, let $h'$ be a second homotopy between $g\circ f'$ and
    $f\circ g'$ and let $\psi '$ be the induced morphism. If there
    exists a higher homotopy between $h$ and $h'$, then $\psi
    $ and $\psi '$ are homotopically equivalent.
  \end{lemma}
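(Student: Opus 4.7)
The plan is to write down $\psi$ and $\phi$ explicitly from the chosen homotopy $h$, unwind both iterated cones, and exhibit the natural isometry as a tautological permutation of summands.

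First I would record the explicit formulas. With the standard cone conventions, a homotopy $h$ of degree $-1$ satisfying $gf' - fg' = \dd h + h \dd$ induces a morphism of complexes $\psi \colon \ocone(f') \to \ocone(f)$ given on $(x,y) \in E'[1] \oplus F'$ by $\psi(x,y) = (g'(x),\, g(y) + h(x))$. With a matched sign convention, the \emph{same} homotopy $h$ produces $\phi \colon \ocone(-g') \to \ocone(g)$ on $(x,y) \in E'[1] \oplus E$ by $\phi(x,y) = (-f'(x),\, f(y) + h(x))$; the minus sign in front of $g'$ in the statement is exactly what is needed for one single $h$ to serve both purposes simultaneously. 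The chain-map property of each of $\psi$ and $\phi$ reduces, term by term, to the single relation defining $h$.

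Next I would unwind the two hermitian cones. As graded hermitian bundles equipped with the orthogonal sum metric, $\ocone(\psi) = \ocone(f')[1] \oplus \ocone(f)$ and $\ocone(\phi) = \ocone(-g')[1] \oplus \ocone(g)$ are both assembled from the same four shifted summands $\ov{E}'[2]$, $\ov{F}'[1]$, $\ov{E}[1]$, $\ov{F}$; only the order of the direct-sum decomposition differs. The desired isometry is the obvious reordering of the two middle factors, which is manifestly an isometry because it is the identity on each summand. The substantive point is to verify that this map intertwines the two differentials. After expanding both cone differentials using the formulas above, each has four components built from $\dd$, $f'$, $g'$, $f$, $g$ and $h$, and one sees that the components match in pairs after the swap, with identical signs. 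The main obstacle in the argument is precisely this sign bookkeeping; once the convention for $\phi$ has been chosen so that it uses the same $h$ as $\psi$ (hence the $-g'$ in the statement), the remainder of the verification is purely formal and requires no analytic input.

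For the last assertion, a higher homotopy between $h$ and $h'$ is a degree $-2$ map $H \colon E' \to F$ whose coboundary in the $\Hom$-complex equals $h' - h$. Given such an $H$, I would define $\tilde H \colon \ocone(f') \to \ocone(f)$ of degree $-1$ by $\tilde H(x,y) = (0,\, H(x))$. A direct calculation, again using only the defining identity for $H$, yields $\dd \tilde H + \tilde H \dd = \psi' - \psi$, which provides the required chain homotopy between $\psi$ and $\psi'$.
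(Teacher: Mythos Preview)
Your proposal is correct and follows essentially the same route as the paper: the same explicit formulas for $\psi$ and $\phi$, the isometry given by permuting the middle two summands of the common underlying graded bundle $\ov{E}'[2]\oplus\ov{F}'[1]\oplus\ov{E}[1]\oplus\ov{F}$, and the same chain homotopy $\tilde H(x,y)=(0,H(x))$ for the final assertion. If anything, you give slightly more detail than the paper, which simply asserts that ``a suitable reordering of factors'' does the job.
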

  \begin{proof}
    Since $h\colon E'\to F[-1]$ is a homotopy between $gf'$ and $fg'$,
    we have
    \begin{equation}\label{eq:homotopy_square_1}
      gf'-fg'=\dd h+h\dd.
    \end{equation}
    First of all, define the arrow $\psi \colon\ocone(f')\to\ocone(f)$ by
    the following rule:
    \begin{displaymath}
      \psi (x',y')=(g'(x'),g(y')+h(x')).
    \end{displaymath}
    From the definition of the differential of a cone and the homotopy
    relation (\ref{eq:homotopy_square_1}), one easily checks that $\psi $
    is a morphism of complexes. Now apply the same construction to the
    diagram
    \begin{equation}\label{eq:homotopy_square_2}
      \xymatrix{
        \overline{E}'\ar[r]^{-g'}\ar[d]_{-f'}	&\overline{E}\ar[d]^{f}\\
        \overline{F'}\ar[r]^{g}	&\overline{F}.
      }
    \end{equation}
    The diagram (\ref{eq:homotopy_square_2}) is still commutative up
    to homotopy and $h$ provides such a homotopy. We obtain a
    morphism of complexes $\phi :\ocone(-g')\to\ocone(g)$, defined by the
    rule
    \begin{displaymath}
      \phi (x',x)=(-f'(x'),f(x)+h(x')).
    \end{displaymath}
    One easily checks that a suitable reordering of factors sets an
    isometry of complexes between $\ocone(\phi )$ and $\ocone(\psi )$.
    Assume now that $h'$ is a second homotopy and that there is a
    higher homotopy $s\colon \ov E' \to \ov F[-2]$ such that
    \begin{displaymath}
      h'-h=\dd s- s \dd.
    \end{displaymath}
    Let $H\colon \ocone(f')\to \ocone(f)[-1]$ be given by
    $H(x',y')=(0,s(x'))$. Then
    \begin{displaymath}
      \psi '-\psi =\dd H + H \dd.
    \end{displaymath}
    Hence $\psi $ and $\psi' $ are homotopically equivalent.
  \end{proof}

  Recall that, given a morphism of complexes $f\colon \ov E \to \ov
  F$, we use the abuse of notation $\ocone(f)=\ocone(\ov E, \ov
  F)$. As seen in the previous lemma, sometimes it is natural to consider
  $\ocone(-f)$. With the notation above it will be denoted also by
  $\ocone(\ov E,\ov F)$. Note that this ambiguity is harmless because
  there is a natural isometry between $\ocone(f)$ and $\ocone(-f)$. Of
  course, when more than one morphism between $\ov E$ and $\ov F$ is
  considered, the above notation should be avoided.

  With this convention, Lemma \ref{lemm:13} can
  be written as
  \begin{equation}
    \label{eq:53}
    \ocone(\ocone(\ov E',\ov E),\ocone(\ov F',\ov F))\cong
    \ocone(\ocone(\ov E',\ov F'),\ocone(\ov E,\ov F)).
  \end{equation}

\begin{definition}\label{def:11}
  We will denote by $\mathscr{M}_{0}=\mathscr{M}_{0}(X)$ the subclass of
  $\oV(X)$ consisting of
  \begin{enumerate}
  \item the orthogonally split complexes;
  \item all objects $\ov E$ such that there
  is an acyclic complex $\ov F$ of $\oV(X)$, and an isometry $\ov E
  \to \ov F\oplus \ov F[1]$.
  \end{enumerate}
\end{definition}

We want to stabilize $\mathscr{M}_{0}$ with respect to hermitian cones.

\begin{definition}\label{def:9}
  We will denote by $\mathscr{M}=\mathscr{M}(X)$ the smallest subclass of
  $\oV(X)$ that satisfies the following
  properties:
  \begin{enumerate}
  \item \label{item:12} it contains $\mathscr{M}_{0}$;
  \item \label{item:13} if $f\colon \overline
  E\to \overline F$ is a morphism and two
  of $\overline E$, $\overline F$ and $\ocone(f)$ belong
  to $\mathscr{M}$, then so does the third.
  \end{enumerate}
  The elements of $\mathscr{M}(X)$ will be called \emph{meager
    complexes}.
\end{definition}

We next give a characterization of meager complexes.
For this, we introduce two auxiliary classes.
\begin{definition} \label{def:12}
  \begin{enumerate}
  \item \label{item:29} Let $\mathscr{M}_{F}$ be the subclass of
    $\oV(X)$ that contains all complexes $\overline E$ that have a
    finite filtration $\Fil$ such that
    \begin{enumerate}
    \item[({\bf A})] \label{item:14} for every $p,n\in \ZZ$, the exact
      sequences
      \begin{displaymath}
        0\to \Fil^{p+1}\overline E^{n}\to \Fil^{p}\overline E^{n}\to
        \Gr^{p}_{\Fil}\overline E^{n}\to 0,
      \end{displaymath}
      with the induced metrics, are orthogonally split short exact
      sequences of vector bundles;
    \item[({\bf B})] \label{item:15} the complexes
      $\Gr^{\bullet}_{\Fil}\overline E$ belong to $\mathscr{M}_{0}$.
    \end{enumerate}
  \item \label{item:32} Let $\mathscr{M}_{S}$ be the subclass of
    $\oV(X)$ that contains all complexes $\overline E$ such that there
    is a morphism of complexes $f\colon \overline E\to \overline F$
    and both $\overline F$ and $\ocone (f)$ belong to
    $\mathscr{M}_{F}$.
  \end{enumerate}
\end{definition}

  \begin{lemma} \label{lemm:10} Let $0\to \ov E\to \ov F\to \ov G\to
    0$ be an exact sequence in $\oV(X)$ whose
    constituent rows are orthogonally split. Assume $\ov E$ and $\ov
    G$ are in $\mathscr{M}_{F}$ . Then $\ov F\in\mathscr{M}_{F}$. In
    particular, $\mathscr{M}_{F}$ is closed under cone formation.
  \end{lemma}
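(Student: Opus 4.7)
The plan is to build a filtration on $\ov F$ that stacks a shifted copy of the filtration on $\ov E$ beneath the pullback of the filtration on $\ov G$, and then to deduce the cone statement as a formal consequence of this first construction.

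After shifting the filtration indices if necessary, I may assume that filtrations witnessing $\ov E,\ov G\in \mathscr{M}_F$ both have the shape $\Fil^{0}=\text{(whole complex)}\supset\cdots\supset\Fil^{N+1}=0$. Writing $\iota\colon \ov E\to \ov F$ and $\pi\colon \ov F\to \ov G$ for the two maps of the given exact sequence, I would set
\[
\Fil^{p} \ov F = \begin{cases}\pi^{-1}(\Fil^{p} \ov G), & 0\le p\le N,\\ \iota(\Fil^{p-N-1}\ov E), & N+1\le p\le 2N+1,\end{cases}
\]
together with $\Fil^{2N+2}\ov F=0$. The two halves are compatible at $p=N+1$ because $\pi^{-1}(0)=\iota(\ov E)$.

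Next I would verify conditions $({\bf A})$ and $({\bf B})$ of Definition \ref{def:12} for this filtration. The hypothesis that the given short exact sequence is degreewise orthogonally split yields identifications $\ov F^{n}\cong \ov E^{n}\oplus \ov G^{n}$ as orthogonal direct sums. Under these, $\Fil^{p}\ov F^{n}$ identifies with $\ov E^{n}\oplus \Fil^{p}\ov G^{n}$ on the upper half of the range and with $\Fil^{p-N-1}\ov E^{n}\oplus 0$ on the lower half, so taking successive quotients gives $\Gr^{p}\ov F\cong \Gr^{p}\ov G$ for $0\le p\le N$ and $\Gr^{p}\ov F\cong \Gr^{p-N-1}\ov E$ for $N+1\le p\le 2N+1$; all of these lie in $\mathscr{M}_{0}$ by hypothesis, which gives $({\bf B})$. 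Condition $({\bf A})$ follows directly from the orthogonality of the degreewise decomposition of $\ov F$ combined with the orthogonal splittings already present in the filtrations of $\ov E$ and $\ov G$. The one point that demands care is that the induced quotient metrics on the $\Gr^{p}\ov F^{n}$ really coincide with those on $\Gr^{p}\ov G^{n}$ and $\Gr^{p-N-1}\ov E^{n}$; this is a purely linear-algebraic verification in each fixed degree, so I do not anticipate a serious obstacle.

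For the second part, given a morphism $f\colon \ov E\to \ov F$ with $\ov E,\ov F\in \mathscr{M}_{F}$, Remark \ref{rem:2} supplies an exact sequence $0\to \ov F\to \ocone(f)\to \ov E[1]\to 0$ whose constituent rows are orthogonally split. Since $\mathscr{M}_{0}$ is visibly stable under the shift $[1]$, so is $\mathscr{M}_{F}$ (shift the filtration index), and applying the first part of the lemma to this sequence gives $\ocone(f)\in \mathscr{M}_{F}$.
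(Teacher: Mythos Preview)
Your proof is correct and takes essentially the same approach as the paper. The paper's proof is a two-sentence sketch (``the filtrations of $\ov E$ and $\ov G$ induce a filtration on $\ov F$ satisfying conditions ({\bf A}) and ({\bf B}); the second claim follows by Remark~\ref{rem:2}''), and you have simply made the implicit construction explicit---stacking the pullback of the filtration on $\ov G$ above the image of the filtration on $\ov E$---and carried out the verifications.
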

  \begin{proof}
    For the first claim, notice that the filtrations of $\ov E$ and $\ov G$
    induce a filtration on $\ov F$ satisfying conditions
    \ref{def:12}~({\bf A}) and \ref{def:12}~({\bf B}). The second
    claim then follows by Remark \ref{rem:2}.
  \end{proof}

\begin{example} \label{exm:2}
  Given any complex $\ov E\in \Ob \oV(X)$, the complex
  $\ocone(\Id_{\ov E})$ belongs to
  $\mathscr{M}_{F}$. This can be seen by induction on the length of
  $\ov E$ using Lemma \ref{lemm:10} and the b\^ete filtration of $\ov
  E$. For the starting point of the induction one takes into account
  that, if $\ov E$ has only one non zero
  degree, then $\ocone(\Id_{\ov E})$ is orthogonally split. In fact,
  this argument shows something slightly stronger. Namely, the complex
  $\ocone(\Id_{\ov E})$ admits a finite filtration $\Fil$ satisfying
  \ref{def:12}~({\bf A}) and such that the complexes
  $\Gr^{\bullet}_{\Fil}  \ocone(\Id_{\ov E})$ are orthogonally
  split.
\end{example}

\begin{theorem}\label{thm:3} The equality
  \begin{math}
    \mathscr{M}=\mathscr{M}_{S}
  \end{math}
  holds.
\end{theorem}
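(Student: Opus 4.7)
The plan is to prove the two inclusions separately. The direction $\mathscr{M}_S \subseteq \mathscr{M}$ reduces, via the 2-out-of-3 property of $\mathscr{M}$, to the intermediate claim $\mathscr{M}_F \subseteq \mathscr{M}$. For this, first observe that $\mathscr{M}$ is closed under shifts: applying 2-out-of-3 to the zero morphism $\ov E \to 0$ shows $\ov E[1] = \ocone(\ov E \to 0) \in \mathscr{M}$ whenever $\ov E \in \mathscr{M}$. Given $\ov E \in \mathscr{M}_F$ with finite filtration $\Fil$, I argue by descending induction on $p$ that $\Fil^p \ov E \in \mathscr{M}$: the orthogonally split short exact sequence $0 \to \Fil^{p+1}\ov E \to \Fil^p \ov E \to \Gr^p \ov E \to 0$, combined with Remark \ref{rem:2}, identifies $\Fil^p \ov E$ with the hermitian cone of a morphism $\Gr^p \ov E[-1] \to \Fil^{p+1}\ov E$ whose source and target lie in $\mathscr{M}$ by the inductive hypothesis (and by $\mathscr{M}_0 \subseteq \mathscr{M}$). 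Taking $p$ small then gives $\ov E \in \mathscr{M}$. Finally, for any $\ov E \in \mathscr{M}_S$ with witness $f \colon \ov E \to \ov F$, both $\ov F$ and $\ocone(f)$ belong to $\mathscr{M}_F \subseteq \mathscr{M}$, so 2-out-of-3 yields $\ov E \in \mathscr{M}$.

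For the converse $\mathscr{M} \subseteq \mathscr{M}_S$, the minimality of $\mathscr{M}$ reduces the task to showing that $\mathscr{M}_S$ contains $\mathscr{M}_0$ and is stable under 2-out-of-3 for hermitian cones. The containment $\mathscr{M}_0 \subseteq \mathscr{M}_S$ is immediate from Example \ref{exm:2}: for $\ov E \in \mathscr{M}_0 \subseteq \mathscr{M}_F$, the identity $\Id_{\ov E}$ serves as a witness, since $\ocone(\Id_{\ov E}) \in \mathscr{M}_F$.

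The main technical step, and the anticipated main obstacle, is proving 2-out-of-3 for $\mathscr{M}_S$. Given a morphism $f \colon \ov E \to \ov F$ with two of $\ov E, \ov F, \ocone(f)$ in $\mathscr{M}_S$ (together with their witnesses), I aim to construct a witness for the third. The key observation is that every witness morphism $g \colon \ov E \to \ov E'$ is a quasi-isomorphism, because $\ocone(g) \in \mathscr{M}_F$ is easily seen to be acyclic (an induction on the filtration length reduces this to $\mathscr{M}_0$, which visibly consists of acyclic complexes), and quasi-isomorphisms between bounded complexes of algebraic vector bundles on a smooth variety are homotopy equivalences. Concretely, in the case $\ov E, \ov F \in \mathscr{M}_S$ with witnesses $g, h$, I pick a homotopy inverse $\rho$ of $g$ and set $\alpha = h f \rho \colon \ov E' \to \ov F'$, which satisfies $\alpha g \simeq h f$ via an explicit homotopy built from $g \rho \simeq \Id_{\ov E}$. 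Lemma \ref{lemm:13} applied to this homotopy commutative square produces a morphism $\psi \colon \ocone(f) \to \ocone(\alpha)$ together with an isometry $\ocone(\psi) \cong \ocone(\phi)$, where $\phi$ is the induced morphism between $\ocone(-g)$ and $\ocone(h)$; both $\ocone(\alpha)$ and $\ocone(\psi)$ then lie in $\mathscr{M}_F$ by Lemma \ref{lemm:10} (applied to $\alpha$ and to $\phi$, respectively, between pairs of objects in $\mathscr{M}_F$). Thus $\psi$ is a witness for $\ocone(f) \in \mathscr{M}_S$.

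The two remaining cases of 2-out-of-3 are treated by the same philosophy, constructing suitable homotopy commutative squares from the homotopy inverses of the given witnesses and applying Lemma \ref{lemm:13} together with Lemma \ref{lemm:10} to extract the required witness. The main challenge is to organize these constructions so that the interplay between Lemma \ref{lemm:13} and Lemma \ref{lemm:10}—whose closure statement for $\mathscr{M}_F$ applies in one direction only—yields a morphism whose target and cone both lie in $\mathscr{M}_F$, matching the defining data of a witness for $\mathscr{M}_S$.
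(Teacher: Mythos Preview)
Your argument for $\mathscr{M}_S \subseteq \mathscr{M}$ is fine and essentially matches the paper. The problem lies in the converse, specifically in the ``key observation'' you invoke for the 2-out-of-3 property of $\mathscr{M}_S$: the claim that a quasi-isomorphism between bounded complexes of vector bundles on a smooth variety is a homotopy equivalence is \emph{false}. A quasi-isomorphism $f$ is a homotopy equivalence if and only if $\cone(f)$ is contractible, but acyclic bounded complexes of vector bundles need not be contractible. For a concrete counterexample, take the Euler sequence $0 \to \mathcal{O}(-1) \to \mathcal{O}^{2} \to \mathcal{O}(1) \to 0$ on $\PP^{1}$; it is exact but not split (there are no nonzero maps $\mathcal{O}(1)\to\mathcal{O}^{2}$), hence not contractible as a three-term complex. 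Equivalently, the natural functor $K^{b}(\mathrm{Vect}(X)) \to \Db(X)$ is not fully faithful, so an inverse in $\Db(X)$ does not lift to a homotopy inverse. Without the homotopy inverse $\rho$ of your witness $g$, you cannot form the chain map $\alpha = hf\rho$, and the homotopy-commutative square you feed into Lemma~\ref{lemm:13} does not exist. The same issue recurs in the two remaining cases you allude to.

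The paper circumvents this obstruction by never asking for homotopy inverses. It first proves a symmetry lemma (Lemma~\ref{lemm:9}): membership in $\mathscr{M}_S$ can be witnessed by a morphism \emph{into} $\ov E$ (with source and cone in $\mathscr{M}_F$) just as well as by a morphism out of $\ov E$. With witnesses $g\colon \ov G \to \ov E$ and $h\colon \ov H \to \ov F$ pointing inward, the composite $f\circ g$ is an honest chain map, and one builds $\ov G' = \ocone(\ov G,\ocone(h))[-1]$ together with explicit maps $g'\colon \ov G' \to \ov E$, $k\colon \ov G' \to \ov H$ and an explicit homotopy making the square with $f$ and $h$ commute; Lemma~\ref{lemm:13} and Lemma~\ref{lemm:10} then finish the job. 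The remaining 2-out-of-3 cases are handled by auxiliary Lemmas~\ref{lemm:11} and~\ref{lemm:12}, again via explicit cone identities rather than homotopy inverses. So the missing ingredient in your approach is precisely this symmetry-and-explicit-construction device, which replaces the nonexistent homotopy inverse.
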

\begin{proof}
  We start by proving that $\mathscr{M}_{F}\subset \mathscr{M}$.
  Let $\overline E\in \mathscr{M}_{F}$ and let $\Fil$ be any
  filtration that satisfies conditions \ref{def:12}~({\bf A}) and
  \ref{def:12}~({\bf B}).  We show that $
  \overline E\in \mathscr{M}$ by induction
  on the length of $\Fil$. If
  $\Fil $ has length one, then $\overline E$ belongs to
  $\mathscr{M}_{0}\subset \mathscr{M}$. If the length of $\Fil$ is
  $k>1$, let $p$ be
  such that $\Fil^{p}\overline E=\overline E$ and
  $\Fil^{p+1}\overline E\not = \overline E$. On the one hand, $\Gr
  ^{p}_{\Fil}\overline E [-1]\in
  \mathscr{M}_{0}\subset \mathscr{M}$ and, on the other hand, the
  filtration $\Fil$
  induces a filtration on $\Fil^{p+1}\overline E$ fulfilling
  conditions \ref{def:12}~({\bf A}) and \ref{def:12}~({\bf B}) and has
  length $k-1$. Thus, by induction hypothesis, $\Fil^{p+1}\overline
  E\in \mathscr{M}$. Then, by
  Lemma \ref{lemm:10}, we deduce that $\overline E\in
  \mathscr{M}$.

  Clearly, the fact that $\mathscr{M}_{F}\subset \mathscr{M}$
  implies that $\mathscr{M}_{S}\subset \mathscr{M}$. Thus, to
  prove the theorem, it only remains to show that $\mathscr{M}_{S}$
  satisfies the condition \ref{def:9}~\ref{item:13}.

  The content of the next result is that
  the apparent asymmetry in the definition of
  $\mathscr{M}_{S}$ is not real.

  \begin{lemma}\label{lemm:9}
    Let $\overline E\in \Ob \oV(X)$. Then there is a morphism $f\colon
    \overline
    E\to \overline F$ with $\overline F$ and
    $\ocone(f)$ in $\mathscr{M}_{F}$ if and only if there is a
    morphism $g\colon \overline G\to \overline E$ with
    $\overline G$ and
    $\ocone(g)$ in $\mathscr{M}_{F}$.
  \end{lemma}
  \begin{proof}
    Assume that there is a morphism $f\colon \overline
    E\to \overline F$ with $\overline F$ and
    $\ocone(f)$ in $\mathscr{M}_{F}$. Then, write $\overline
    G= \ocone(f)[-1]$ and let $g\colon \overline G\to
    \overline E$ be the natural map. By hypothesis, $\overline
    G\in \mathscr{M}_{F}$. Moreover, since there is a natural isometry
    \begin{displaymath}
      \ocone(\ocone(\overline E,\overline
      F)[-1],\overline E)\cong
      \ocone(\ocone(\Id_{\overline E})[-1],\overline
      F),
    \end{displaymath}
    by Example \ref{exm:2} and Lemma \ref{lemm:10} we obtain that
    $\ocone(g)\in \mathscr{M}_{F}$. Thus we have proved one
    implication. The proof of the other implication is analogous.
   \end{proof}

  Let now $f\colon \overline E\to \overline F$ be a morphism of
  complexes with $\overline E, \overline F\in
  \mathscr{M}_{S}$. We want to show that $\ocone(f)\in
  \mathscr{M}_{S}$. By Lemma \ref{lemm:9}, there are morphisms of
  complexes
  $g\colon \overline G\to \overline E$ and
  $h\colon \overline H\to \overline F$ with $\overline
  G,\ \overline H,\ \ocone(g),\ \ocone(h)\in
  \mathscr{M}_{F}$. We consider the map $\overline G\to
  \ocone(h)$ induced by $f\circ g$. Then we write
  \begin{displaymath}
    \overline{G'}=\ocone(\overline G, \ocone(h) )[-1].
  \end{displaymath}
  By Lemma \ref{lemm:10}, we have that $\overline{G'}\in
  \mathscr{M}_{F}$.  We denote by $g'\colon G'\to E$ and $k\colon G'\to H$ the
  maps $g'(a,b,c)=g(a)$ and $k(a,b,c)=-b$.

  There is an exact sequence
  \begin{displaymath}
    0\to \ocone(h)\to \ocone(g')\to \ocone(g)\to 0
  \end{displaymath}
  whose constituent short exact sequences are orthogonally split.
  Since $\ocone(h)$ and $\ocone(g)$ belong to $\mathscr{M}_{F}$,
  Lemma \ref{lemm:10} insures that $\ocone(g')$ belongs to
  $\mathscr{M}_{F}$ as well.

  There is a diagram
  \begin{equation}\label{eq:51}
    \xymatrix{
      \overline {G'} \ar[d]_{g'} \ar[r]^{k} & \overline H\ar[d]^{h} \\
      \overline E \ar[r]^{f} & \overline F
    }
  \end{equation}
  that commutes up to homotopy. We fix the homotopy $s\colon \ov G'\to
  F$ given by $s(a,b,c)=c$. By Lemma \ref{lemm:13} there is
  a natural isometry
  \begin{displaymath}
    \ocone(\ocone(g'),\ocone(h))\cong \ocone(\ocone(-k),\ocone(f)).
  \end{displaymath}
  Applying Lemma \ref{lemm:10} again, we have that $\ocone(-k)$ and
  $\ocone(\ocone(g'),\ocone(h))$ belong to
  $\mathscr{M}_{F}$. Therefore $\ocone(f)$ belongs to
  $\mathscr{M}_{S}$.

  \begin{lemma} \label{lemm:11}
    Let $f\colon \ov E\to \ov F$ be a morphism in $\oV(X)$.
    \begin{enumerate}
    \item \label{item:18} If $\ov E\in \mathscr{M}_{S}$ and $\ocone(f)\in
      \mathscr{M}_{F}$ then $\ov F\in \mathscr{M}_{S}$.
    \item \label{item:19} If $\ov F\in \mathscr{M}_{S}$ and $\ocone(f)\in
      \mathscr{M}_{F}$ then $\ov E\in \mathscr{M}_{S}$.
    \end{enumerate}
  \end{lemma}
  \begin{proof}
    Assume that $\ov E\in \mathscr{M}_{S}$ and $\ocone(f)\in
      \mathscr{M}_{F}$. Let $g\colon \ov G\to \ov E$ with $\ov G\in
      \mathscr{M}_{F}$ and $\ocone(g)\in \mathscr{M}_{F}$. By Lemma
      \ref{lemm:10} and Example \ref{exm:2}, $\ocone(\ocone(\Id_{\ov
        G}),\ocone(f))\in \mathscr{M}_{F}$. But 
      there is a natural isometry of complexes
      \begin{displaymath}
        \ocone(\ocone(\Id_{\ov G}),\ocone(f))\cong
        \ocone(\ocone(\ocone(g)[-1],\ov G),\ov F).
      \end{displaymath}
      Since, by Lemma \ref{lemm:10}, $\ocone(\ocone(g)[-1],\ov G)\in
      \mathscr{M}_{F}$, then $\ov F\in \mathscr{M}_{S}$.

      The second statement of the lemma is proved using the dual
      argument.
  \end{proof}

  \begin{lemma} \label{lemm:12}
    Let $f\colon \ov E\to \ov F$ be a morphism in $\oV(X)$.
    \begin{enumerate}
    \item \label{item:20} If $\ov E\in \mathscr{M}_{F}$ and $\ocone(f)\in
      \mathscr{M}_{S}$ then $\ov F\in \mathscr{M}_{S}$.
    \item \label{item:21} If $\ov F\in \mathscr{M}_{F}$ and $\ocone(f)\in
      \mathscr{M}_{S}$ then $\ov E\in \mathscr{M}_{S}$.
    \end{enumerate}
  \end{lemma}
  \begin{proof}
        Assume that $\ov E\in \mathscr{M}_{F}$ and $\ocone(f)\in
      \mathscr{M}_{S}$. Let $g\colon \ov G\to \ocone(f)$ with $\ov G$
      and $\ocone(\ov G, \ocone(f))$ in $\mathscr{M}_{F}$. There is a
      natural isometry of complexes
      \begin{displaymath}
        \ocone(\ov G,\ocone(f)))\cong \ocone(\ocone(\ov G[-1],\ov
        E),\ov F)
      \end{displaymath}
      that shows $\ov F\in \mathscr{M}_{S}$.

      The second statement of the lemma is proved by a dual argument.
  \end{proof}

  Assume now that $f\colon \ov E\to \ov F$ is a morphism in $\oV(X)$
  and $\ov E,\ \ocone(f)\in
  \mathscr{M}_{S}$. Let $g\colon \ov G \to \ov E$ with $\ov G,\
  \ocone(g)\in \mathscr{M}_{F}$. There is a natural isometry
  \begin{displaymath}
    \ocone(\ocone(\ov G,\ov E),\ocone(\Id_{\ov F}))
    \cong
    \ocone(\ocone(\ov G,\ov F),\ocone(\ov E, \ov F)),
  \end{displaymath}
  that implies $\ocone(\ocone(\ov G,\ov F),\ocone(\ov E, \ov
  F))\in \mathscr{M}_{F}$. By Lemma \ref{lemm:11}, we deduce that
  $\ocone(\ov G,\ov F)\in \mathscr{M}_{S}$. By Lemma \ref{lemm:12},
  $\ov F\in  \mathscr{M}_{S}$.

  With $f$ as above, the fact that, if $\ov F$ and $\ocone(f)$ belong
  to $\mathscr{M}_{S}$ so does $\ov E$, is proved by a similar
  argument. In conclusion, $\mathscr{M}_{S}$ satisfies the condition
  \ref{def:9}~\ref{item:13}, hence $\mathscr{M}\subset
  \mathscr{M}_{S}$, which completes the proof of the
  theorem.
\end{proof}

The class of meager complexes satisfies the next list of properties,
that follow almost directly from Theorem \ref{thm:3}.
\begin{theorem} \label{thm:4}
  \begin{enumerate}
  \item \label{item:22} If $\overline E$ is a meager complex and $\ov
    F$ is a hermitian vector bundle, then the complexes
    $\ov F \otimes  \ov E$, $\Hom(\ov F,\ov E)$ and $\Hom(\ov E,\ov
    F)$, with the induced metrics, are meager.
  \item \label{item:23} If $\overline E^{\ast,\ast}$ is a bounded
    double complex of hermitian vector bundles and all rows (or
    columns) are meager complexes,
    then the complex $\Tot(\ov E^{\ast,\ast})$ is meager.
  \item \label{item:24} If $\ov E$ is a meager complex and $\ov F$ is another
    complex of hermitian vector bundles, then the complexes
    \begin{align*}
      \ov E\otimes \ov F&=\Tot((\ov F^{i}\otimes \ov E^{j})_{i,j}),\\
      \uHom (\ov E,\ov F)&= \Tot(\Hom ((\ov E^{-i},\ov F^{j})_{i,j}))\text{
        and }\\
      \uHom (\ov F,\ov E)&= \Tot(\Hom ((\ov F^{-i},\ov E^{j})_{i,j})),
    \end{align*}
    are meager.
  \item \label{item:25} If $f\colon X\to Y$ is a morphism of smooth complex
    varieties and $\ov E$ is a meager complex on $Y$, then $f^{\ast}
    \ov E$ is a meager complex on $X$.
  \end{enumerate}
\end{theorem}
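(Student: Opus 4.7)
The strategy is to exploit the characterization $\mathscr{M}=\mathscr{M}_{S}$ furnished by Theorem~\ref{thm:3}. All four operations in the statement are exact, additive, preserve orthogonal direct sum decompositions, commute with the shift $[1]$, and commute up to natural isometry with the formation of hermitian cones. Consequently each of them sends $\mathscr{M}_{0}$ into itself, and the main point in each case is to verify preservation of the auxiliary class $\mathscr{M}_{F}$. Once this is done, for any $\ov E\in\mathscr{M}_{S}$ we pick a witness morphism $f\colon\ov E\to\ov F$ with $\ov F,\ocone(f)\in\mathscr{M}_{F}$; applying the operation term-by-term produces a morphism with target and cone still in $\mathscr{M}_{F}$, so the image of $\ov E$ lies again in $\mathscr{M}_{S}=\mathscr{M}$.

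For (i), let $\ov G$ be a hermitian vector bundle and $\ov E\in\mathscr{M}_{F}$ with filtration $\Fil$. The term-wise tensor filtration $\ov G\otimes\Fil^{p}\ov E$ on $\ov G\otimes\ov E$ inherits orthogonally split constituent rows and has graded pieces $\ov G\otimes\Gr^{p}_{\Fil}\ov E$. Since tensoring by $\ov G$ visibly preserves both orthogonal splittings and the shape $\ov H\oplus\ov H[1]$, these graded pieces remain in $\mathscr{M}_{0}$, so $\ov G\otimes\ov E\in\mathscr{M}_{F}$. The same argument applies to $\Hom(\ov G,\ov E)$ and $\Hom(\ov E,\ov G)$.

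For (ii), I would argue by induction on the number of non-zero columns of the bounded double complex $\ov E^{\ast,\ast}$. The one-column base case reduces to the fact that shifts of meager complexes are meager: applying \ref{def:9}~\ref{item:13} to the morphism $\ov E\to 0$ exhibits $\ov E[1]=\ocone(\ov E\to 0)$ as the third vertex of a meager triple. For the inductive step, the column-wise stupid filtration yields a short exact sequence
\begin{displaymath}
0\longrightarrow\Tot(\ov E^{>p,\ast})\longrightarrow\Tot(\ov E^{\ast,\ast})\longrightarrow\ov E^{p,\ast}[-p]\longrightarrow 0
\end{displaymath}
whose constituent rows are orthogonally split direct sums. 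By Remark~\ref{rem:2} this identifies $\Tot(\ov E^{\ast,\ast})$ with a hermitian cone whose source and target are meager by the induction hypothesis, so \ref{def:9}~\ref{item:13} concludes. The only mildly technical point of the whole theorem is precisely this identification of the filtration quotients of a bounded total complex with iterated hermitian cones of shifted rows in a way compatible with orthogonal splittings.

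Part (iii) is now a formal consequence of (i) and (ii): for each fixed $i$ the column $\ov F^{i}\otimes\ov E^{\bullet}$ of the double complex $(\ov F^{i}\otimes\ov E^{j})_{i,j}$ is meager by (i), so (ii) applies; the arguments for $\uHom(\ov E,\ov F)$ and $\uHom(\ov F,\ov E)$ are verbatim the same after fixing the appropriate index so that a single hermitian vector bundle is paired against the meager factor. Finally, (iv) is immediate: $f^{\ast}$ is exact on locally free sheaves, sends the hermitian metric to its pullback, and commutes strictly with direct sums, shifts and hermitian cones, so it preserves $\mathscr{M}_{0}$ and, by the same 2-out-of-3 bookkeeping, the whole of $\mathscr{M}$.
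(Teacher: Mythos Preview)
Your proposal is correct and follows precisely the route the paper has in mind: the paper's own proof is the single sentence ``follow almost directly from Theorem~\ref{thm:3}'', and you have spelled out exactly those details---preservation of $\mathscr{M}_{0}$ and $\mathscr{M}_{F}$ under the relevant operations, compatibility with hermitian cones, and then the $\mathscr{M}_{S}$ (or, for (ii), the 2-out-of-3) conclusion. One small remark: your argument for (ii) does not actually invoke Theorem~\ref{thm:3} at all but only the defining closure property \ref{def:9}~\ref{item:13}, which is if anything cleaner than what your opening paragraph advertises.
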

We now introduce the notion of tight morphism.
\begin{definition}\label{def:tight_morphism}
A morphism $f\colon\ov E\to \ov F$ in $\oV(X)$ is said to be
\emph{tight} if
$\ocone(f)$ is a meager complex.
\end{definition}

\begin{proposition} \label{prop:8}
  \begin{enumerate}
  \item Every meager complex is acyclic.
  \item Every tight morphism is a quasi-isomorphism.
  \end{enumerate}
\end{proposition}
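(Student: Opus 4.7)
The plan is to prove (i) by a minimality argument that sidesteps the characterization $\mathscr{M}=\mathscr{M}_{S}$ of Theorem \ref{thm:3}. Let $\mathscr{A}$ denote the class of bounded acyclic complexes in $\oV(X)$. By Definition \ref{def:9}, $\mathscr{M}$ is the smallest class satisfying properties \ref{def:9}~\ref{item:12} and \ref{def:9}~\ref{item:13}, so it suffices to check that $\mathscr{A}$ satisfies both; the inclusion $\mathscr{M}\subset\mathscr{A}$ will then be immediate, which is (i).

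For property \ref{def:9}~\ref{item:12}, I would verify $\mathscr{M}_{0}\subset\mathscr{A}$ directly from Definition \ref{def:11}. An orthogonally split complex decomposes degreewise as an orthogonal direct sum of elementary one-step complexes $A\xrightarrow{\Id}A$ placed in consecutive degrees, and is therefore acyclic; and a complex of the form $\ov F\oplus\ov F[1]$ with $\ov F$ acyclic is manifestly acyclic as a direct sum of acyclic complexes. For property \ref{def:9}~\ref{item:13}, given any morphism $f\colon\ov E\to\ov F$ in $\oV(X)$, Remark \ref{rem:2} supplies a short exact sequence of complexes
\begin{displaymath}
0\to\ov F\to\ocone(f)\to\ov E[1]\to 0,
\end{displaymath}
whose induced long exact sequence of cohomology sheaves shows tautologically that if two of $\ov E,\ov F,\ocone(f)$ are acyclic then so is the third. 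This settles (i).

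Part (ii) then falls out: if $f\colon\ov E\to\ov F$ is tight, $\ocone(f)$ is meager and hence, by (i), acyclic, so the same long exact sequence forces $f$ to induce isomorphisms $H^{n}(\ov E)\xrightarrow{\sim}H^{n}(\ov F)$ for every $n$. I do not anticipate any real obstacle. The only delicate point is to fix the interpretation of ``orthogonally split complex'' in Definition \ref{def:11}~\ref{item:12} --- namely, a complex whose differentials split orthogonally into kernel and its orthogonal complement, so that it is a degreewise orthogonal sum of elementary two-term complexes $A\xrightarrow{\Id}A$ --- after which everything reduces to a single application of the long exact sequence in cohomology.
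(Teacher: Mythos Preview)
Your proof is correct and takes a genuinely different route from the paper. The paper argues via the characterization $\mathscr{M}=\mathscr{M}_{S}$ of Theorem~\ref{thm:3}: first it observes that every $\overline{E}\in\mathscr{M}_{F}$ is acyclic because its graded pieces lie in $\mathscr{M}_{0}$, and then deduces acyclicity for $\overline{E}\in\mathscr{M}_{S}$ from the defining data $f\colon\overline{E}\to\overline{F}$ with $\overline{F},\ocone(f)\in\mathscr{M}_{F}$. Your argument bypasses Theorem~\ref{thm:3} entirely: you verify directly that the class $\mathscr{A}$ of acyclic complexes satisfies the two closure properties of Definition~\ref{def:9}, and conclude $\mathscr{M}\subset\mathscr{A}$ by minimality. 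This is more elementary and self-contained; in particular it makes Proposition~\ref{prop:8} logically independent of the harder Theorem~\ref{thm:3}. The paper's route, by contrast, illustrates the characterization theorem in action but is not actually needed here.
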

\begin{proof}
  Let $\overline E\in \mathscr{M}_{F}(X)$. Let $\Fil$ be any
  filtration that satisfies conditions
    \ref{def:12}~({\bf A}) and \ref{def:12}~({\bf B}).
   By definition, the
  complexes $\Gr_{\Fil}^{p}\overline E$ belong to $\mathscr{M}_{0}$,
  so they are acyclic. Hence
  $\overline E$ is acyclic.

  If $\overline E\in
  \mathscr{M}_{S}(X)$, let $\overline F$ and $\ocone(f)$
  be as in Definition \ref{def:12}~\ref{item:32}. Then, $\overline F$
  and
  $\ocone(f)$ are acyclic, hence $\overline E$ is also
  acyclic. Thus we have proved the first statement.
  The second statement is a direct consequence of the first one.
\end{proof}

Many arguments used for proving that a certain complex is meager or a certain
morphism is tight involve
cumbersome diagrams. In order to ease these arguments we will
develop a calculus of acyclic complexes.

Before starting we need some preliminary lemmas.

\begin{lemma} \label{lemm:16}
  Let $\ov E$, $\ov F$ be objects of $\oV(X)$. Then the following
  conditions are equivalent.
  \begin{enumerate}
  \item \label{item:37} There exists an object $\ov G$ and a diagram
    \begin{displaymath}
      \xymatrix{
        & \ov G \ar[dl]^{\sim}_{f} \ar[dr]^{g}&\\
        \ov E && \ov F,}
    \end{displaymath}
    such that $\ocone(g)\oplus \ocone(f)[1]$ is meager.
  \item \label{item:38}
    There exists an object $\ov G$ and a diagram
    \begin{displaymath}
      \xymatrix{
        & \ov G \ar[dl]^{\sim}_{f} \ar[dr]^{g}&\\
        \ov E && \ov F,}
    \end{displaymath}
    such that $f$ and $g$ are tight morphisms.
  \end{enumerate}
\end{lemma}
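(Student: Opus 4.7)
The equivalence is proved in two directions.

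For (ii) $\Rightarrow$ (i), the point is that the class $\mathscr{M}$ of meager complexes is closed under shifts and direct sums. Applying Definition \ref{def:9}(\ref{item:13}) to the morphism $\ov M\to 0$ shows $\ov M[1]\cong\ocone(\ov M\to 0)$ is meager whenever $\ov M$ is; iterating yields closure under $[-1]$ as well. Consequently $\ov M\oplus\ov N\cong\ocone(0\colon \ov M[-1]\to\ov N)$ is meager whenever $\ov M$ and $\ov N$ are. Hence if $f,g$ are tight, both $\ocone(f)$ and $\ocone(g)$ lie in $\mathscr{M}$, and so does $\ocone(g)\oplus\ocone(f)[1]$; in addition $f$ is a quasi-isomorphism by Proposition \ref{prop:8}, which is all (i) demands.

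For (i) $\Rightarrow$ (ii) the strategy is to enlarge $\ov G$ by a meager complex read off from the hypothesis. Set $\ov M := \ocone(g)[-1]\oplus\ocone(f)$, so that $\ov M[1] = \ocone(g)\oplus\ocone(f)[1]$ is meager by hypothesis, and therefore $\ov M$ is meager by the shift closure established above. Define $\ov G' := \ov G\oplus\ov M$ with its orthogonal projection $p\colon \ov G'\to\ov G$; a direct computation with differentials shows the hermitian cone decomposes orthogonally as $\ocone(p)\cong\ocone(\Id_{\ov G})\oplus\ov M[1]$, and both summands are in $\mathscr{M}$ (the first by Example \ref{exm:2}, the second by hypothesis), so $p$ itself is tight. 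It is then natural to propose the maps $f'\colon\ov G'\to\ov E$ and $g'\colon\ov G'\to\ov F$ extending $f\circ p$ and $g\circ p$ by correction terms built from the internal $\ocone(f)$-summand of $\ov M$ (which carries its own inclusion from $\ov E$) and the $\ocone(g)[-1]$-summand (with its inclusion from $\ov F[-1]$). The verification that both cones end up in $\mathscr{M}_S=\mathscr{M}$ proceeds by iteratively applying the rearrangement isometry \eqref{eq:53} of Lemma \ref{lemm:13}, combined with the fact that for any acyclic complex $\ov F$ the complex $\ov F\oplus\ov F[1]$ lies in $\mathscr{M}_0$.

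The main obstacle is that the naive choice $f'=f\circ p$ does not work: a direct calculation yields $\ocone(f')\cong\ov M[1]\oplus\ocone(f)=\ocone(g)\oplus\ocone(f)[1]\oplus\ocone(f)$, in which the pair $\ocone(f)\oplus\ocone(f)[1]$ lies in $\mathscr{M}_0$ but the residual $\ocone(g)$ is only known to be meager \emph{jointly} with $\ocone(f)[1]$. Thus the correction terms on $f'$ (and symmetrically on $g'$) must be engineered so that, after reshuffling, the remaining pieces combine in the correct pattern into a complex in $\mathscr{M}_F$ or $\mathscr{M}_S$. Bookkeeping the signs and the hermitian structures throughout this rearrangement is the delicate point, but the characterization $\mathscr{M}=\mathscr{M}_S$ in Theorem \ref{thm:3} means it suffices to exhibit an auxiliary morphism into a filtered complex of the required form, rather than to repeatedly invoke the cone-stability axiom.
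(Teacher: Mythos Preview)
Your (ii)$\Rightarrow$(i) direction is fine, though more elaborate than needed; once you know $\mathscr{M}$ is closed under shifts and direct sums the implication is immediate.

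The (i)$\Rightarrow$(ii) direction, however, has a genuine gap: you never actually construct the maps $f',g'$ nor verify that their cones are meager. Your choice $\ov M=\ocone(g)[-1]\oplus\ocone(f)$ is too large and forces you into the very obstacle you describe: with $f'=f\circ p$ one gets $\ocone(f')\cong\ocone(f)\oplus\ocone(g)\oplus\ocone(f)[1]$, and no regrouping makes this meager without already knowing one of $\ocone(f)$ or $\ocone(g)$ is meager on its own. The ``correction terms'' you allude to do not help either: the natural map you mention goes from $\ov E$ \emph{into} $\ocone(f)$, not out of it, so it cannot be used to build a map $\ov G'\to\ov E$. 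The final paragraph gestures at Theorem~\ref{thm:3} but does not exhibit any concrete morphism or filtration, so the argument is incomplete.

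The paper's fix is much simpler and avoids all of this: take $\ov G'=\ov G\oplus\ocone(f)$ (just the one summand), and let $f',g'$ be $f,g$ composed with the first projection. Then
\[
\ocone(f')\cong\ocone(f)\oplus\ocone(f)[1],\qquad
\ocone(g')\cong\ocone(g)\oplus\ocone(f)[1].
\]
The first is in $\mathscr{M}_{0}$ because $\ocone(f)$ is acyclic ($f$ being a quasi-isomorphism); the second is meager by hypothesis~(i). No correction terms, no rearrangements, no appeal to $\mathscr{M}_{S}$ are needed. The asymmetry between $f$ and $g$ in the construction is exactly what makes it work: you exploit that $f$ is already a quasi-isomorphism, which is part of the data in (i).
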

\begin{proof}
  Clearly, \ref{item:38} implies \ref{item:37}. To prove the converse
  implication, if $\ov G$ satisfies the conditions of \ref{item:37},
  we put $G'=G\oplus \ocone(f)$ and consider the morphisms $f'\colon \ov G'\to
  E$ and $g'\colon G'\to F$ induced by the first projection $G'\to
  G$. Then
  \begin{displaymath}
    \ocone(f')=\ocone(f)\oplus \ocone(f)[1],
  \end{displaymath}
  that is meager because $\ocone(f)$ is acyclic, and
  \begin{displaymath}
    \ocone(g')=\ocone(g)\oplus \ocone(f)[1],
  \end{displaymath}
  that is meager by hypothesis.
\end{proof}

\begin{lemma}\label{lemm:15}
Any diagram of tight morphisms, of the following types:
    \begin{equation}\label{diag:1}
      \begin{array}{ccc}
      \xymatrix{
        \ov E\ar[rd]_{f}	&	&\ov G\ar[ld]^{g}\\
        &\ov F
      } &
      \quad  \quad &
      \xymatrix{
		&\ov H\ar[rd]^{g'}\ar[ld]_{f'}	&\\
        \ov E	&	&\ov G
	}\\
    (i) && (ii)
    \end{array}
    \end{equation}
    can be completed into a diagram of tight morphisms
    \begin{equation}\label{eq:meager_1}
      \xymatrix{
        &\ov H\ar[ld]_{f'}\ar[rd]^{g'}	&\\
        \ov E	\ar[rd]_{f}&	&\ov G\ar[ld]^{g}\\
        &\ov F,	&
      }
    \end{equation}
which commutes up to homotopy.
\end{lemma}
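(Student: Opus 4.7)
For case (i), the strategy is to construct $\ov H$ as a ``hermitian homotopy fibre product''. Concretely, let $(f,-g)\colon \ov E\oplus\ov G\to\ov F$, set $\ov H=\ocone((f,-g))[-1]$, and take $f'$, $g'$ to be the projections onto the $\ov E$- and $\ov G$-summands. As a graded hermitian bundle, $H^n=E^n\oplus G^n\oplus F^{n-1}$ with the orthogonal direct sum metric; projection onto the $\ov F[-1]$-summand defines a homotopy $s$ with $ds+sd=ff'-gg'$, so the completed square commutes up to homotopy. Dually, for case (ii) I set $\ov F=\ocone((f',-g'))$, with underlying graded bundle $H^{\bullet+1}\oplus E^\bullet\oplus G^\bullet$, and take $f$, $g$ to be the coordinate inclusions; projection onto the $\ov H[1]$-summand supplies the homotopy.

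To verify that $f'$ (resp.\ $f$) is tight, I exhibit an orthogonally split short exact sequence of hermitian complexes. For case (i), a direct computation on the explicit description of $\ocone(f')$ shows that the subbundle spanned by the $G$- and $F$-coordinates is a subcomplex isometric to $\ocone(g)$, and the quotient (the $\ov E[1]\oplus \ov E$-part) is isometric to $\ocone(\Id_{\ov E})$, giving
\begin{equation*}
0\longrightarrow\ocone(g)\longrightarrow\ocone(f')\longrightarrow\ocone(\Id_{\ov E})\longrightarrow 0.
\end{equation*}
For case (ii), the dual calculation, based on the subcomplex $\{(x',0,x,0)\}\subset\ocone(f)$ and the natural projection onto the $H\oplus G$-factor, produces
\begin{equation*}
0\longrightarrow\ocone(\Id_{\ov E})\longrightarrow\ocone(f)\longrightarrow\ocone(g')\longrightarrow 0.
\end{equation*}
In both cases, $\ocone(\Id_{\ov E})$ is meager by Example \ref{exm:2}, while the remaining endpoint is meager by the hypothesis that $g$ (resp.\ $g'$) is tight.

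By Remark \ref{rem:2}, each of these orthogonally split sequences identifies the middle term with a hermitian cone of a morphism between its two endpoints, both of which are meager; the two-out-of-three stability in Definition \ref{def:9}\ref{item:13} then forces $\ocone(f')$ (respectively $\ocone(f)$) to be meager, i.e.\ $f'$ (resp.\ $f$) to be tight. Tightness of $g'$ (resp.\ $g$) is obtained by the symmetric argument interchanging the roles of $\ov E$ and $\ov G$. The main obstacle is the careful sign tracking needed to identify the subcomplex and quotient with $\ocone(g)$, $\ocone(g')$, and $\ocone(\Id_{\ov E})$ on the nose; the observation recorded just before Definition \ref{def:11}, that $\ocone(h)$ and $\ocone(-h)$ are naturally isometric, absorbs the residual sign ambiguities, after which the argument reduces to the formal closure properties of $\mathscr{M}$.
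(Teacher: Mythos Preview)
Your proof is correct and follows essentially the same construction as the paper: your $\ov H=\ocone((f,-g))[-1]$ is, up to reordering the summands, exactly the paper's $\ov H=\ocone(\ov G,\ocone(f))[-1]$, and the homotopy you write down coincides with theirs. The only genuine difference is in how tightness is verified. The paper first checks that $g'$ is tight via an explicit isometry $\ocone(\ov H,\ov G)\cong\ocone(\ocone(\Id_{\ov G}),\ocone(f)[-1])$, and then deduces tightness of $f'$ by invoking Lemma~\ref{lemm:13} to swap the order of iterated cones. You instead treat $f'$ and $g'$ symmetrically, exhibiting for each an orthogonally split short exact sequence whose outer terms are already known to be meager, then using Remark~\ref{rem:2} and the two-out-of-three property of $\mathscr{M}$. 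Your route is slightly more elementary in that it avoids Lemma~\ref{lemm:13}, at the cost of a bit more explicit bookkeeping with differentials and signs; the paper's route leverages the cone-interchange lemma so that only one of the two tightness claims needs a direct check.
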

\begin{proof}
  We prove the statement only for the case (i), the other one being analogous.
  Note that there is a natural arrow $\ov
  G\to\ocone(f)$. Define
  \begin{displaymath}
    \ov H=\ocone(\ov G,\ocone(f))[-1].
  \end{displaymath}
  With this choice, diagram (\ref{eq:meager_1} i) becomes commutative up
  to homotopy, taking the projection $H\to F[-1]$ as homotopy. We
  first show that $\ocone(\ov H, \ov G)$ is meager. Indeed, there is a
  natural isometry
  \begin{displaymath}
    \ocone(\ov H,\ov G)
    \cong\ocone(\ocone(\Id_{\ov G}), \ocone(\ov E,\ov F)[-1])
  \end{displaymath}
  and the right hand side complex is meager. Now for $\ocone(\ov H,
  \ov E)$. By Lemma \ref{lemm:13}, there is an isometry
  \begin{equation}
    \ocone(\ocone(\ov H, \ov E),\ocone(\ov G, \ov F))
    \cong
    \ocone(\ocone(\ov H, \ov G),\ocone(\ov E, \ov F)).
  \end{equation}
  The right hand side complex is meager, hence the left hand side is
  meager as well. Since, by hypothesis, $\ocone(\ov G, \ov F)$ is
  meager, the same is true for $\ocone(\ov H, \ov E)$.
\end{proof}

\begin{definition} \label{def:17}
  We will say that two complexes $\ov E$ and $\ov F$ are \emph{tightly
    related} if any of the equivalent conditions of Lemma
  \ref{lemm:16} holds.
\end{definition}
 It is easy to see, using Lemma \ref{lemm:15}, that to be tightly
related is an equivalence relation.

\begin{definition} \label{def:10}
  We denote by $\oV(X)/\mathscr{M}$ the set of classes of
  tightly related complexes. The class of a complex $\ov E$ will be
  denoted $[\ov E]$.
\end{definition}

\begin{theorem}[Acyclic calculus] \label{thm:7}
  \begin{enumerate}
  \item \label{item:34} For a complex $\ov E\in \Ob\oV(X)$, the class
    $[\ov E]=0$ if and only if $\ov E\in \mathscr{M}$.
  \item \label{item:33} The operation $\oplus$ induces an operation,
    that we denote $+$, in $\oV(X)/\mathscr{M}$. With this operation
    $\oV(X)/\mathscr{M}$ is an associative abelian semigroup.
  \item \label{item:39} For a complex $\ov E$, there exists a complex
    $\ov F$ such that $[\ov F]+[\ov E]=0$, if and only if $\ov E$ is acyclic. In
    this case $[\ov E[1]]=-[\ov E]$.
  \item \label{item:35} For every morphism $f\colon \ov E \to \ov F$,
    if $E$ is acyclic, then the equality
    \begin{displaymath}
      [\ocone(\ov E,\ov F)]=[\ov F]-[\ov E]
    \end{displaymath}
    holds.
  \item \label{item:40} For every morphism $f\colon \ov E \to \ov F$,
    if $F$ is acyclic, then the equality
    \begin{displaymath}
      [\ocone(\ov E,\ov F)]=[\ov F]+[\ov E[1]]
    \end{displaymath}
    holds.
  \item \label{item:36} Given a diagram
    \begin{displaymath}
      \xymatrix{
        \overline{E}'\ar[r]^{f'}\ar[d]_{g'} &\overline{F}'\ar[d]^{g}\\
        \overline{E}\ar[r]^{f}	&\overline{F}
      }
    \end{displaymath}
    in $\oV(X)$, that commutes up to homotopy, then for every choice
    of homotopy we have
    \begin{displaymath}
      [\ocone(\ocone(f'),\ocone(f))]=[\ocone(\ocone(-g'),\ocone(g))].
    \end{displaymath}
  \item \label{item:41} Let $f\colon \ov E\to \ov F$, $g\colon \ov
    F\to \ov G$ be morphisms of complexes. Then
    \begin{align*}
      [\ocone(\ocone(g\circ f),\ocone(g))]&=[\ocone(f)[1]],\\
      [\ocone(\ocone(f),\ocone(g\circ f))]&=[\ocone(g)].
    \end{align*}
    If one of $f$ or $g$ are quasi-isomorphisms, then
    \begin{displaymath}
      [\ocone(g\circ f)]=[\ocone(g)]+[\ocone(f)].
    \end{displaymath}
    If $g\circ f$ is a quasi-isomorphism, then
    \begin{displaymath}
      [\ocone(g)]=[\ocone(f)[1]]+[\ocone(g\circ f)].
    \end{displaymath}
  \end{enumerate}
\end{theorem}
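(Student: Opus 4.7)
The plan is to prove the seven items in the stated order, each building on the previous and on the tools already established: Lemma \ref{lemm:16}, Remark \ref{rem:2}, Example \ref{exm:2}, Lemma \ref{lemm:10}, Proposition \ref{prop:8}, Lemma \ref{lemm:13}, and the (easy) facts that $\mathscr{M}$ is closed under shift and under direct sum, both of which follow from Theorem \ref{thm:3} together with Definition \ref{def:9}(ii).

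Items (i)--(iii) are essentially bookkeeping. For (i), $[\ov E]=0$ means $\ov E$ is tightly related to $0$; Lemma \ref{lemm:16} produces $\ov G$ with tight maps to $0$ and to $\ov E$, whence $\ov G[1]=\ocone(\ov G\to 0)$ is meager, so $\ov G$ itself is meager, and then Definition \ref{def:9}(ii) forces $\ov E\in\mathscr{M}$. The converse uses $\ov G=\ov E$ with the identity and zero maps, both of whose cones are meager. For (ii), direct sums of tight diagrams remain tight (the cone of a direct sum of morphisms is the direct sum of the cones, and meagers are closed under direct sum), and the natural isometries of $\oplus$ give associativity and commutativity. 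For (iii), if $\ov E$ is acyclic then $\ov E\oplus\ov E[1]\in\mathscr{M}_0\subset\mathscr{M}$, so $[\ov E[1]]=-[\ov E]$; conversely, if $[\ov F]+[\ov E]=0$ then $\ov E\oplus\ov F\in\mathscr{M}$ is acyclic by Proposition \ref{prop:8}, hence so is $\ov E$.

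For (iv), with $\ov E$ acyclic and $f\colon\ov E\to\ov F$ arbitrary, consider the inclusion $\iota\colon\ov F\to\ocone(f)\oplus\ov E$, $y\mapsto((0,y),0)$. Projection of $\ocone(\iota)$ onto its $(\ov E[1],\ov E)$-coordinates yields an orthogonally split short exact sequence
\[
0\to\ocone(\Id_{\ov F})\to\ocone(\iota)\to\ov E[1]\oplus\ov E\to 0,
\]
with left end in $\mathscr{M}_F$ by Example \ref{exm:2} and right end in $\mathscr{M}_0$ because $\ov E$ is acyclic. Remark \ref{rem:2} together with Definition \ref{def:9}(ii) then places $\ocone(\iota)$ in $\mathscr{M}$, so $\iota$ is tight and $[\ov F]=[\ocone(f)]+[\ov E]$. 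Item (v) is proved by the dual construction: with $\ov F$ acyclic, the morphism $\tilde\pi\colon\ocone(f)\oplus\ov F[1]\to\ov E[1]$, $(x,y,a)\mapsto x$, produces an analogous orthogonally split extension
\[
0\to\ov F[1]\oplus\ov F[2]\to\ocone(\tilde\pi)\to\ocone(\Id_{\ov E[1]})\to 0
\]
whose ends are both meager; hence $\tilde\pi$ is tight, giving $[\ov E[1]]=[\ocone(f)]+[\ov F[1]]$, from which (v) follows after using $[\ov F[1]]=-[\ov F]$ from (iii).

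Item (vi) is immediate: pass to classes in the natural isometry (\ref{eq:53}) of Lemma \ref{lemm:13}. For (vii), apply (vi) to the two strictly commutative squares
\[
\xymatrix@R=1.2em@C=1.2em{\ov E\ar[r]^{f}\ar[d]_{\Id}&\ov F\ar[d]^{g}\\ \ov E\ar[r]^{gf}&\ov G}
\qquad\text{and}\qquad
\xymatrix@R=1.2em@C=1.2em{\ov E\ar[r]^{gf}\ar[d]_{f}&\ov G\ar[d]^{\Id}\\ \ov F\ar[r]^{g}&\ov G.}
\]
The morphisms $\ocone(-\Id_{\ov E})\to\ocone(g)$ and $\ocone(-f)\to\ocone(\Id_{\ov G})$ appearing on the right-hand sides of (\ref{eq:53}) have acyclic source and target respectively, so (iv) and (v) reduce them to $[\ocone(g)]$ and $[\ocone(f)[1]]$; this yields the first two identities $[\ocone(\ocone(f),\ocone(gf))]=[\ocone(g)]$ and $[\ocone(\ocone(gf),\ocone(g))]=[\ocone(f)[1]]$. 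The remaining identities follow by applying (iv) or (v) to the induced morphisms $\phi'\colon\ocone(f)\to\ocone(gf)$ and $\phi\colon\ocone(gf)\to\ocone(g)$ and combining with the first two: the case $f$ (resp.\ $gf$) a quasi-isomorphism uses (iv) on $\phi'$ (resp.\ $\phi$) since the source is acyclic; the case $g$ a quasi-isomorphism uses (v) on $\phi$, yielding $[\ocone(f)[1]]=[\ocone(g)]+[\ocone(gf)[1]]$, which via $[\ocone(g)]=-[\ocone(g)[1]]$ from (iii) becomes an equality of shifted classes that must finally be un-shifted by exploiting the fact that shift preserves and reflects tight equivalence. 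The main subtlety is precisely this ``shift argument'' in the $g$ quasi-isomorphism case.
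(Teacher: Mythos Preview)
Your proof is correct and follows essentially the same route as the paper's. The paper disposes of (i)--(iii) as ``immediate'' and treats (vii) only as ``an easy consequence of the previous properties,'' so your added detail there (the two commutative squares and the shift-unshift argument) is a genuine expansion rather than a deviation. The one cosmetic difference is in (iv): the paper uses the map $\ov F\oplus\ov E[1]\to\ocone(f)$ induced by $\ov F\hookrightarrow\ocone(f)$ and exhibits a direct isometry $\ocone(\ov F\oplus\ov E[1],\ocone(f))\cong\ocone(\ov E\oplus\ov E[1],\ocone(\Id_{\ov F}))$, whereas you use the map $\ov F\to\ocone(f)\oplus\ov E$ and an orthogonally split short exact sequence; both identify the relevant cone as built from $\ocone(\Id_{\ov F})$ and $\ov E\oplus\ov E[1]$, so the content is the same. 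Your ``un-shift'' step in the $g$ quasi-isomorphism case is valid because shift is a semigroup automorphism of $\oV(X)/\mathscr{M}$ (meager complexes are closed under shift in both directions), but you might state this explicitly rather than leave it as a parenthetical remark.
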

\begin{proof}
  The statements \ref{item:34} and \ref{item:33} are immediate.
  For assertion \ref{item:39}, observe that, if $\ov E$ is
  acyclic, then $\ov E\oplus \ov E[1]$ is meager. Thus
  \begin{displaymath}
    [\ov E]+[\ov E[1]]=[\ov E\oplus \ov E[1]]=0.
  \end{displaymath}
  Conversely, if $[\ov F]+[\ov E]=0$, then $\ov F\oplus \ov E$ is
  meager, hence acyclic. Thus $\ov E$ is acyclic.

  For property \ref{item:35} we consider the map $\ov F\oplus \ov E[1]\to
  \ocone(f)$ defined by the map $\ov F\to \ocone(f)$. There is a
  natural isometry
  \begin{displaymath}
    \ocone(\ov F\oplus \ov E[1],\ocone(f))\cong
    \ocone(\ov E\oplus \ov E[1],\ocone(\Id_{F})).
  \end{displaymath}
  Since the right hand complex is meager, so is the first. In consequence
  \begin{displaymath}
    [\ocone(f)]=[\ov F\oplus \ov E[1]]=[\ov F]+[\ov E[1]]=[\ov F]-[\ov E].
  \end{displaymath}
  Statement \ref{item:40} is proved analogously.

  Statement \ref{item:36} is a direct consequence of Lemma
  \ref{lemm:13}.

  Statement \ref{item:41} is an easy consequence of the previous
  properties.
\end{proof}

\begin{remark}\label{rem:9}
  In $f\colon \ov E \to \ov F$ is a morphism and neither $\ov E$ nor
  $\ov F$ are acyclic, then $[\ocone(f)]$ depends on the homotopy
  class of $f$ and not only on $\ov E$ and $\ov F$. For instance, let
  $\ov E$ be a non-acyclic complex of hermitian bundles. Consider the
  zero map and the identity map $0,\Id\colon \ov E\to \ov E$.  Since,
  by Example \ref{exm:2}, we know that $\ocone (\Id)$ is
  meager, then $[\ocone (\Id)]=0$. By contrast,
  \begin{displaymath}
    [\ocone(0)]=[\ov E]+[\ov E[-1]]\not = 0
  \end{displaymath}
  because $\ov E$ is not acyclic. This implies that we can not extend
  Theorem \ref{thm:7}~\ref{item:35} or \ref{item:40} to the case
  when none of the complexes are acyclic.
\end{remark}

\begin{corollary}\label{cor:3}
  \begin{enumerate}
  \item \label{item:42}
    Let
    \begin{displaymath}
      0\longrightarrow \ov E\longrightarrow \ov F\longrightarrow \ov G
      \longrightarrow 0
    \end{displaymath}
    be a short exact sequence in $\oV(X)$ all whose constituent short
    exact sequences are orthogonally split.  If either $\ov E$ or
    $\ov G$ is acyclic, then
    \begin{displaymath}
      [\ov F]=[\ov E]+[\ov G].
    \end{displaymath}
  \item \label{item:58} Let $\ov E^{\ast,\ast}$ be a bounded double complex of
    hermitian vector bundles. If the columns of $\ov E^{\ast,\ast}$
    are acyclic, then
    \begin{displaymath}
      [\Tot(\ov E^{\ast,\ast})]=\sum_{k}(-1)^{k} [\ov E^{k,\ast}].
    \end{displaymath}
    If the rows are acyclic, then
    \begin{displaymath}
      [\Tot(\ov E^{\ast,\ast})]=\sum_{k}(-1)^{k} [\ov E^{\ast,k}].
    \end{displaymath}
    In particular, if rows and columns are acyclic
    \begin{displaymath}
      \sum_{k}(-1)^{k} [\ov E^{k,\ast}]=\sum_{k}(-1)^{k} [\ov E^{\ast,k}].
    \end{displaymath}
  \end{enumerate}
\end{corollary}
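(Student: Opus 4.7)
The plan is to derive (i) from Remark~\ref{rem:2} together with the acyclic calculus of Theorem~\ref{thm:7}, and then to deduce (ii) from (i) by a brutal filtration on the double complex. The main bookkeeping point throughout is the shift formula $[\ov E[n]]=(-1)^{n}[\ov E]$ for acyclic $\ov E$, which follows by iteration from Theorem~\ref{thm:7}~\ref{item:39}.

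For (i), by Remark~\ref{rem:2} the orthogonal splitting of every constituent row of $0\to\ov E\to\ov F\to\ov G\to 0$ produces a morphism of complexes $f_{s}\colon \ov G[-1]\to \ov E$ together with a natural isometry $\ov F\cong \ocone(f_{s})$. If $\ov E$ is acyclic, then it is the codomain of $f_{s}$ and Theorem~\ref{thm:7}~\ref{item:40} gives
\begin{equation*}
[\ov F]=[\ocone(f_{s})]=[\ov E]+[\ov G[-1][1]]=[\ov E]+[\ov G],
\end{equation*}
since $\ov G[-1][1]$ is literally equal to $\ov G$. If instead $\ov G$ is acyclic, then so is $\ov G[-1]$, and Theorem~\ref{thm:7}~\ref{item:35} combined with $[\ov G[-1]]=-[\ov G]$ (a case of~\ref{item:39}) again yields $[\ov F]=[\ov E]-[\ov G[-1]]=[\ov E]+[\ov G]$.

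For (ii), let $F^{p}\subset \Tot(\ov E^{\ast,\ast})$ be the brutal filtration by columns, $F^{p}=\bigoplus_{p'\ge p}\ov E^{p',\ast}[-p']$. Since in each degree $F^{p}$ is literally the direct sum $F^{p+1}\oplus \ov E^{p,\ast}[-p]$, the short exact sequence
\begin{equation*}
0\longrightarrow F^{p+1}\longrightarrow F^{p}\longrightarrow \ov E^{p,\ast}[-p]\longrightarrow 0
\end{equation*}
has orthogonally split constituent rows. By hypothesis every column $\ov E^{p,\ast}$ is acyclic, hence so is the shifted quotient $\ov E^{p,\ast}[-p]$, and part (i) applies to give $[F^{p}]=[F^{p+1}]+[\ov E^{p,\ast}[-p]]$. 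The filtration is finite, so a descending induction starting from $F^{N}=0$ for $N$ large, together with $[\ov E^{p,\ast}[-p]]=(-1)^{p}[\ov E^{p,\ast}]$, produces the formula $[\Tot(\ov E^{\ast,\ast})]=\sum_{k}(-1)^{k}[\ov E^{k,\ast}]$. The row version is the same argument applied to the transposed double complex, and the final identity is just the comparison of these two expressions for $[\Tot(\ov E^{\ast,\ast})]$. No serious obstacle is anticipated beyond the consistent handling of shifts, which is entirely controlled by~\ref{item:39}.
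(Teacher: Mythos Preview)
Your proof is correct and follows essentially the same route as the paper: part~(i) via Remark~\ref{rem:2} and Theorem~\ref{thm:7}~\ref{item:35}, \ref{item:40}, and part~(ii) by induction along the b\^ete filtration of $\Tot(\ov E^{\ast,\ast})$. The paper's argument is stated more tersely, but your version simply makes the shifts and the use of \ref{item:39} explicit.
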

\begin{proof}
  The first item follows from Theorem \ref{thm:7}~\ref{item:35}
  and \ref{item:40}, by using Remark \ref{rem:2}. The second assertion
  follows from the first by induction on the size of the complex, by
  using the usual filtration of $\Tot(E^{\ast,\ast})$.
\end{proof}

As an example of the use of the acyclic calculus we prove
\begin{proposition}\label{prop:6}
  Let $f\colon \ov E\to\ov F$ and $g\colon \ov F\to\ov G$ be morphisms
  of complexes. If two of $f,\ g,\ g\circ f$ are tight, then so is
  the third.
\end{proposition}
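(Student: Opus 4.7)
The natural framework is the semigroup $\oV(X)/\mathscr{M}$: by Theorem~\ref{thm:7}~\ref{item:34}, a morphism $\varphi$ is tight if and only if $[\ocone(\varphi)]=0$. My plan is to derive each of the three implications directly from the acyclic calculus, starting from the two identities
\begin{align*}
[\ocone(\ocone(g\circ f),\ocone(g))]&=[\ocone(f)[1]],\\
[\ocone(\ocone(f),\ocone(g\circ f))]&=[\ocone(g)]
\end{align*}
of Theorem~\ref{thm:7}~\ref{item:41}, combined with the evaluation rules \ref{item:35} and \ref{item:40} that apply when the source or the target of a cone is acyclic. A preliminary observation is that by Proposition~\ref{prop:8} tight morphisms are quasi-isomorphisms, so whenever two of $f$, $g$, $g\circ f$ are tight the third is at least a quasi-isomorphism by two-out-of-three.

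If $f$ and $g$ are tight, then $f$ is a quasi-isomorphism, and the corresponding clause of Theorem~\ref{thm:7}~\ref{item:41} gives directly $[\ocone(g\circ f)]=[\ocone(g)]+[\ocone(f)]=0$. If $f$ and $g\circ f$ are tight, then $\ocone(f)$ is meager and in particular acyclic, so Theorem~\ref{thm:7}~\ref{item:35} turns the second identity into
\begin{displaymath}
[\ocone(g)]=[\ocone(\ocone(f),\ocone(g\circ f))]=[\ocone(g\circ f)]-[\ocone(f)]=0.
\end{displaymath}
If $g$ and $g\circ f$ are tight, then $\ocone(g)$ is acyclic and Theorem~\ref{thm:7}~\ref{item:40} transforms the first identity into
\begin{displaymath}
[\ocone(f)[1]]=[\ocone(g)]+[\ocone(g\circ f)[1]]=0,
\end{displaymath}
where the second summand vanishes because $\ocone(g\circ f)$ is meager and the shift of a meager complex is again meager by \ref{item:39}. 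Since $f$ is a quasi-isomorphism, $\ocone(f)$ is acyclic, and \ref{item:39} then gives $[\ocone(f)]=-[\ocone(f)[1]]=0$, so $f$ is tight.

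The argument is essentially a bookkeeping exercise; the only point requiring care is that $\oV(X)/\mathscr{M}$ is a semigroup rather than a group, so each ``cancellation'' performed above must be justified by the fact that the class being cancelled is the class of an acyclic complex, hence invertible in the semigroup by Theorem~\ref{thm:7}~\ref{item:39}. In each of the three cases the needed acyclicity is exactly what tightness of one of the two given morphisms provides, so the verifications go through without further trouble.
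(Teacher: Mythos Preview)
Your proof is correct and follows the same strategy as the paper: reduce tightness to the vanishing of $[\ocone(\,\cdot\,)]$ via Theorem~\ref{thm:7}~\ref{item:34} and use the identities of~\ref{item:41}. The paper's argument is shorter because it observes that in each of the three cases at least one of $f$, $g$ is tight and hence a quasi-isomorphism, so the single formula $[\ocone(g\circ f)]=[\ocone(g)]+[\ocone(f)]$ from~\ref{item:41} applies uniformly and the case split (together with your detours through~\ref{item:35} and~\ref{item:40}) becomes unnecessary.
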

\begin{proof}
  Since tight morphisms are quasi-isomorphisms, by Theorem
  \ref{thm:7}~\ref{item:41}
  \begin{displaymath}
    [\ocone(g\circ f)]=[\ocone(f)]+[\ocone(g)].
  \end{displaymath}
  Hence the result follows from \ref{thm:7}~\ref{item:34}.
\end{proof}

\begin{definition}\label{def:KA}
  We will denote by $\KA(X)$ the set of invertible elements of
  $\oV(X)/\mathscr{M}$. This is an abelian subgroup. By Theorem
  \ref{thm:7}~\ref{item:39} the group $\KA(X)$
  agrees with the image in $\oV(X)/\mathscr{M}$ of the class of
  acyclic complexes.
\end{definition}

The group $\KA(X)$ is a universal abelian group for additive Bott-Chern
classes. More precisely, let us denote by $\oVo(X)$ the
full subcategory of $\oV(X)$ of acyclic complexes.

\begin{theorem} \label{thm:8}
  Let $\mathscr{G}$ be an abelian group and let $\varphi\colon \Ob \oVo(X) \to
  \mathscr{G}$ be an assignment such that
  \begin{enumerate}
  \item (Normalization) Every complex of the form
    \begin{displaymath}
      \ov E\colon\quad 0\longrightarrow \ov A
      \overset{\Id}{\longrightarrow}
      \ov A \longrightarrow 0
    \end{displaymath}
satisfies $\varphi(\ov E)=0$.
  \item (Additivity for exact sequences) For every short exact
    sequence in $\oVo(X)$
    \begin{displaymath}
      0\longrightarrow \ov E\longrightarrow \ov F\longrightarrow \ov G
      \longrightarrow 0,
    \end{displaymath}
    all whose constituent short
    exact sequences are orthogonally split, we have
    \begin{displaymath}
      \varphi(\ov F)=\varphi(\ov E)+\varphi(\ov G).
    \end{displaymath}
  \end{enumerate}
Then $\varphi$ factorizes through a group homomorphism $\widetilde
\varphi\colon \KA(X)\to \mathscr{G}$.
\end{theorem}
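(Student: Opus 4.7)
The strategy is to show that $\varphi$ vanishes on $\mathscr{M}(X)$, is additive with respect to $\oplus$, and is constant on tight equivalence classes; then defining $\widetilde\varphi([\ov E]) := \varphi(\ov E)$ produces the desired homomorphism. Additivity under $\oplus$ on acyclic complexes is immediate: applying the additivity axiom to the orthogonally split short exact sequence $0 \to \ov E \to \ov E \oplus \ov F \to \ov F \to 0$ gives $\varphi(\ov E \oplus \ov F) = \varphi(\ov E) + \varphi(\ov F)$, and in particular $\varphi(0)=0$.

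For vanishing on $\mathscr{M}$, by Theorem \ref{thm:3} it suffices to treat $\mathscr{M}_S$, which reduces to handling $\mathscr{M}_F$, which in turn rests on $\mathscr{M}_0$. Orthogonally split complexes in $\mathscr{M}_0$ are direct sums of identity complexes, so normalization and additivity yield $\varphi = 0$ on them. For the second type, let $\ov F$ be acyclic; by Example \ref{exm:2} the complex $\ocone(\Id_{\ov F})$ admits a finite filtration whose graded pieces are orthogonally split (and acyclic, so all intermediate subcomplexes are acyclic as well), and iterated additivity gives $\varphi(\ocone(\Id_{\ov F})) = 0$. Applying additivity to the orthogonally split sequence $0 \to \ov F \to \ocone(\Id_{\ov F}) \to \ov F[1] \to 0$ then yields the key identity $\varphi(\ov F[1]) = -\varphi(\ov F)$, whence $\varphi(\ov F \oplus \ov F[1]) = 0$. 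For $\ov E \in \mathscr{M}_F$, induction on the filtration length using the split sequences $0 \to \Fil^{p+1}\ov E \to \Fil^p \ov E \to \Gr^p_{\Fil}\ov E \to 0$ (each term acyclic, as the graded pieces lie in $\mathscr{M}_0$) reduces $\varphi(\ov E)$ to a sum over graded pieces, which all vanish. Finally, for $\ov E \in \mathscr{M}_S$, choose $f\colon \ov E \to \ov F$ with $\ov F, \ocone(f) \in \mathscr{M}_F$; the sequence $0 \to \ov F \to \ocone(f) \to \ov E[1] \to 0$ from Remark \ref{rem:2} is orthogonally split and has all terms acyclic, so additivity gives $0 = 0 + \varphi(\ov E[1])$, hence $\varphi(\ov E) = -\varphi(\ov E[1]) = 0$.

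To descend to $\KA(X)$, let $\ov E$ and $\ov F$ be tightly related acyclic complexes; by Lemma \ref{lemm:16} choose $\ov G$ with tight morphisms $g\colon \ov G \to \ov E$ and $h\colon \ov G \to \ov F$ (with $\ov G$ acyclic, since tight morphisms are quasi-isomorphisms). From $0 \to \ov E \to \ocone(g) \to \ov G[1] \to 0$, additivity together with $\varphi(\ocone(g)) = 0$ and $\varphi(\ov G[1]) = -\varphi(\ov G)$ yields $\varphi(\ov E) = \varphi(\ov G)$, and similarly $\varphi(\ov F) = \varphi(\ov G)$, so $\varphi(\ov E) = \varphi(\ov F)$. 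Setting $\widetilde\varphi([\ov E]) := \varphi(\ov E)$ is therefore well-defined on $\KA(X)$, and the additivity of $\varphi$ under $\oplus$ shows $\widetilde\varphi$ is a group homomorphism. The main obstacle is the $\mathscr{M}_0$ step: one must carefully combine the filtration of $\ocone(\Id_{\ov F})$ with the split sequence above to extract $\varphi(\ov F[1]) = -\varphi(\ov F)$, since this identity is the engine driving every subsequent cancellation (notably the reductions on $\mathscr{M}_S$ and on tight equivalence).
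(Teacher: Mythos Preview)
Your proof is correct and follows the same overall strategy as the paper: establish additivity under $\oplus$, show $\varphi$ vanishes on meager complexes by first handling $\mathscr{M}_0$ (via the key identity $\varphi(\ov F[1])=-\varphi(\ov F)$ extracted from $\ocone(\Id_{\ov F})$), and then descend to $\KA(X)$. The one genuine difference is in passing from $\mathscr{M}_0$ to $\mathscr{M}$: the paper uses the \emph{minimality} of $\mathscr{M}$ directly, observing that the shift formula together with additivity makes the vanishing locus of $\varphi$ closed under the cone condition of Definition~\ref{def:9}~\ref{item:13}; you instead invoke the characterization $\mathscr{M}=\mathscr{M}_S$ from Theorem~\ref{thm:3} and work explicitly through $\mathscr{M}_F$ and $\mathscr{M}_S$. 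Both routes are sound; the paper's is slightly more economical, while yours has the virtue of making every reduction concrete and also spelling out the descent to tight equivalence classes, which the paper leaves implicit.
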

\begin{proof}
  The second condition tells us that $\varphi$ is a morphism of
  semigroups. Thus we only need to show that it vanishes on meager
  complexes.

  Again by the second condition, it is enough to prove that $\varphi$
  vanishes on the class $\mathscr{M}_{0}$. Both conditions together
  imply that $\varphi$ vanishes on orthogonally split
  complexes. Therefore, by Example
  \ref{exm:2}, it vanishes on complexes of the form
  $\ocone(\Id_{E})$. Once more by the second condition, if $E$ is acyclic,
  \begin{displaymath}
    \varphi(E)+\varphi(E[1])=\varphi(\ocone(\Id_{E}))=0.
  \end{displaymath}
  Thus $\varphi$ vanishes also on the complexes described in Definition \ref{def:11} (ii).
  Hence $\varphi$ vanishes on the class $\mathscr{M}$.
\end{proof}

\begin{remark}
  The considerations of this section carry over to the category of
  complex analytic varieties. If $M$ is a complex analytic variety, one thus obtains for
  instance a group $\KA^{\an}(M)$. Observe that, by GAGA principle, whenever $X$ is a
  proper smooth algebraic variety over $\CC$, the group $\KA^{\an}(X^{\an})$ is
  canonically isomorphic to $\KA(X)$.
\end{remark}

As an example, we consider the simplest case $\Spec \CC$ and we
compute the group $\KA(\Spec \CC)$. Given an acyclic
complex $E$ of $\CC$-vector spaces, there is a canonical isomorphism
\begin{displaymath}
  \alpha :\det E\longrightarrow \CC.
\end{displaymath}
If we have an acyclic complex of hermitian vector bundles $\ov E$,
there is an induced metric on $\det E$. If we put on $\CC$ the trivial hermitian metric, then there is a well
defined positive real number $\|\alpha \|$, namely the norm of the isomorphism $\alpha$.

\begin{theorem}
  The assignment $\ov E\mapsto \log\|\alpha \|$ induces an isomorphism
  \begin{displaymath}
    \widetilde{\tau }\colon \KA(\Spec
    \CC)\overset{\simeq}{\longrightarrow}\RR.
  \end{displaymath}
\end{theorem}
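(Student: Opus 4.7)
\textbf{Step 1 (Factorization through Theorem \ref{thm:8}).} The plan is to first invoke Theorem \ref{thm:8} applied to the assignment $\tau(\ov E):=\log\|\alpha_{\ov E}\|$. For the normalization axiom, the complex $\ov E\colon 0\to \ov A\xrightarrow{\Id}\ov A\to 0$ has $\det\ov E=\det A\otimes(\det A)^{\vee}$, on which $\alpha$ is the canonical evaluation pairing and is an isometry for the induced metrics, so $\log\|\alpha\|=0$. For additivity along an orthogonally split short exact sequence $0\to\ov E\to\ov F\to\ov G\to 0$ of acyclic complexes, the canonical identification $\det F^{\bullet}=\det E^{\bullet}\otimes\det G^{\bullet}$ interchanges the three trivializations and is an isometry degreewise (because the splitting is orthogonal), giving $\|\alpha_{F}\|=\|\alpha_{E}\|\cdot\|\alpha_{G}\|$ and additivity of the logarithm. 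Theorem \ref{thm:8} then produces $\widetilde\tau\colon \KA(\Spec\CC)\to\RR$.

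\textbf{Step 2 (The one-parameter family $\ov E_\lambda$).} For $\lambda\in\CC^{\ast}$ let $\ov E_{\lambda}$ denote the acyclic complex $0\to\CC\xrightarrow{\lambda}\CC\to 0$ in degrees $0$ and $1$, each factor with the standard hermitian metric. A direct computation of the canonical trivialization of $\det\ov E_{\lambda}=\CC\otimes\CC^{\vee}$ gives $\widetilde\tau([\ov E_{\lambda}])=\log|\lambda|$, already showing surjectivity of $\widetilde\tau$. Multiplication by $u\in U(1)$ on the degree-$1$ factor furnishes an isometric isomorphism $\ov E_{\lambda}\cong\ov E_{u\lambda}$, so $[\ov E_{\lambda}]$ depends only on $|\lambda|$. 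Applying Theorem \ref{thm:7}~\ref{item:41} to the pair of quasi-isomorphisms $\lambda_{1},\lambda_{2}\colon \CC\to\CC$ yields $[\ocone(\lambda_{1}\lambda_{2})]=[\ocone(\lambda_{1})]+[\ocone(\lambda_{2})]$, and combining this with the natural isometry $\ocone(\lambda)\cong\ov E_{\lambda}[1]$ and the shift identity $[\ov E[1]]=-[\ov E]$ of Theorem \ref{thm:7}~\ref{item:39}, one obtains the multiplicativity
\[
  [\ov E_{\lambda_{1}\lambda_{2}}]=[\ov E_{\lambda_{1}}]+[\ov E_{\lambda_{2}}],\qquad \lambda_{1},\lambda_{2}\in\RR_{>0}.
\]

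\textbf{Step 3 (Reduction and conclusion).} For an arbitrary acyclic $\ov E\in\Ob\oVo(\Spec\CC)$, the orthogonal splitting $E^{n}=(\ker d^{n})^{\perp}\oplus\ker d^{n}$ realizes $\ov E$ as an isometric direct sum of two-term acyclic complexes $\ov C_{n}\colon 0\to(\ker d^{n})^{\perp}\xrightarrow{d^{n}}\ker d^{n+1}\to 0$ placed in degrees $n,n+1$. Applying a singular value decomposition to each restricted differential, each $\ov C_{n}$ splits further and orthogonally into rank-one pieces isometric to $\ov E_{\lambda_{n,i}}[-n]$ with $\lambda_{n,i}>0$. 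Passing to classes with Corollary \ref{cor:3}, the shift rule $[\ov E[-n]]=(-1)^{n}[\ov E]$, and the multiplicativity of Step 2, gives
\[
  [\ov E]=[\ov E_{\mu}],\qquad \mu=\prod_{n,i}\lambda_{n,i}^{(-1)^{n}}\in\RR_{>0}.
\]
Since $\widetilde\tau([\ov E_{\mu}])=\log\mu$ by Step 2, vanishing of $\widetilde\tau([\ov E])$ forces $\mu=1$; then $\ov E_{1}\cong\ocone(\Id_{\CC[1]})$ is meager by Example \ref{exm:2}, so $[\ov E]=0$. This establishes injectivity and concludes the proof.

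\textbf{Main obstacle.} The heart of the argument lies in Step 3: one must check that the orthogonal and singular value decompositions genuinely present $\ov E$ as an \emph{isometric} direct sum of shifted copies of $\ov E_{\lambda}$, and then carry the resulting classes through the acyclic calculus so that everything collapses onto a single $[\ov E_{\mu}]$. Once this structural reduction is in place, Steps 1 and 2 furnish respectively the general formalism of Theorem \ref{thm:8} and the explicit identification of $\widetilde\tau$ with the logarithm $\log\colon \RR_{>0}\to\RR$.
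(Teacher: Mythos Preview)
Your proof is correct and follows the same overall strategy as the paper: verify the hypotheses of Theorem \ref{thm:8} to produce the homomorphism $\widetilde\tau$, exhibit surjectivity via explicit two-term complexes, and then show that every class in $\KA(\Spec\CC)$ is represented by such a complex.

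The one genuine difference lies in Step~3. The paper argues by induction on the total rank $\sum_i\rk E^i$: it locates the first nonzero differential, picks a vector $v\in E^n$, and peels off a rank-one subcomplex $e^{a_1}$ via an orthogonally split short exact sequence, appealing to Corollary~\ref{cor:3}. You instead use the orthogonal decomposition $E^n=(\ker d^n)^{\perp}\oplus\ker d^n$ together with a singular value decomposition of each $d^n|_{(\ker d^n)^{\perp}}$ to write $\ov E$ \emph{globally} as an isometric direct sum of shifted rank-one two-term complexes in one stroke. Your route is slightly cleaner over $\Spec\CC$, where full linear algebra (spectral theorem, SVD) is available; the paper's inductive peeling is closer in spirit to the general acyclic calculus and would adapt more readily to situations where such decompositions are not at hand. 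For the additivity relation you use Theorem~\ref{thm:7}~\ref{item:41} on the composition $\lambda_1\lambda_2$, whereas the paper uses the $2\times 2$ square and Corollary~\ref{cor:3}~\ref{item:58}; these are equivalent manipulations. One cosmetic point: your identification $\ov E_1\cong\ocone(\Id_{\CC[1]})$ has the shift off by one (it is $\ocone(\Id_{\CC})[-1]$), but since meager complexes are closed under shifts this does not affect the conclusion.
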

\begin{proof}
  First, we observe that the assignment in the theorem
  satisfies the hypothesis of Theorem \ref{thm:8}. Thus,
  $\widetilde{\tau }$ exists and is a group morphism.
  Second, for every $a\in \RR$ we consider the acyclic complex
  \begin{displaymath}
    e^{a}:= 0\longrightarrow
    \overline{\CC}\overset{e^{a}}{\longrightarrow}
    \overline{\CC}\longrightarrow 0,
  \end{displaymath}
  where $\overline {\CC}$ has the standard metric and the left copy of
  $\ov {\CC}$ sits in degree $0$.
  Since $\widetilde {\tau }([e^{a}])=a$ we deduce that $\widetilde{\tau }$ is
  surjective.

  Next we prove that the complexes of the form $[e^{a}]$ form a set of
  generators of $\KA(\Spec \CC)$.
  Let
  $\ov E=(\ov E^{\ast},f^{\ast})$ be an acyclic complex. Let $r=\sum_{i}
  \rk(E^{i})$. We will show by induction on $r$ that
  $[\ov E]=\sum_{k}(-1)^{i_{k}}[e^{a_{k}}]$ for certain integers
  $i_{k}$ and real numbers $a_{k}$. Let $n$ be the
  smallest integer such that $f^{n}\colon E^{n}\to E^{n+1}$ is
  non-zero. Let $v\in E^{n}\setminus \{0\}$. By acyclicity, $f^{n}$ is
  injective, hence $\|f^{n}(v)\|\not=0.$ Set $i_{1}=n$ and
  $a_{1}=\log(\|f^{n}(v)\|/\|v\|)$ and consider the diagram
  \begin{displaymath}
    \xymatrix{
      & &0\ar[d]  &0\ar[d] &  &\\
      &0\ar[r] &\ov{\CC}\ar[r]^{e^{a}}\ar[d]^{\gamma ^{n}}
      &\ov{\CC}\ar[r]\ar[d]^{\gamma ^{n+1}}
      &0 \ar[r]\ar[d]
      & \dots
      \\
      &0\ar[r] &\ov E^{n}\ar[r]\ar[d] &\ov E^{n+1}\ar[r]\ar[d] &\ov E^{n+2}\ar[r]\ar[d] &\dots
      \\
      &0\ar[r] &\ov F^{n}\ar[r]\ar[d] &\ov F^{n+1}\ar[r]\ar[d] &\ov F^{n+2}\ar[r]\ar[d] &\dots
      \\
      & &0 &0 &0 & &
    }
  \end{displaymath}
  where $\gamma ^{n}(1)=v$, $\gamma ^{n+1}(1)=f^{n}(v)$ and all the
  columns are orthogonally split short exact sequences. By Corollary
  \ref{cor:3}~\ref{item:42} and Theorem \ref{thm:7}~\ref{item:39}, we have
  \begin{displaymath}
    [\ov E]=(-1)^{i_{1}}[e^{a_{1}}]+[\ov F].
  \end{displaymath}
  Thus we deduce the claim.

  Considering now the diagram
  \begin{displaymath}
    \xymatrix{ \ov \CC \ar[r]^{e^{a}}\ar[d]_{\Id}& \ov \CC\ar[d]^{e^{b}}\\
       \ov \CC \ar[r]_{e^{a+b}}& \ov \CC
    }
  \end{displaymath}
  and using Corollary \ref{cor:3}~\ref{item:58} we deduce that
  $[e^{a}]+[e^{b}]=[e^{a+b}]$ and  $[e^{-a}]=-[e^{a}]$. Therefore
  every element of $\KA(\Spec \CC)$ is of the form $[e^{a}]$. Hence
  $\widetilde{\tau }$ is also injective.
\end{proof}

\section{Definition of $\oDb(X)$ and basic
  constructions}\label{sec:oDb}
Let $X$ be a smooth algebraic variety over $\CC$. We denote by
$\Coh(X)$ the abelian category of coherent sheaves on $X$ and by
$\Db(X)$ its bounded derived category.
The objects of $\Db(X)$ are complexes of quasi-coherent sheaves
with bounded coherent cohomology.
The reader is referred to \cite{GelfandManin:MR1950475} for an introduction to derived categories.
For notational convenience, we also introduce $\Cb(X)$,
the abelian category of bounded cochain complexes of coherent sheaves
on $X$. Arrows in $\Db(X)$ will be written as $\dashrightarrow$, while
arrows in $\Cb(X)$ will be denoted by $\rightarrow$. The symbol $\sim$
will mean either quasi-isomorphism in $\Cb(X)$ or isomorphism in
$\Db(X)$. Every functor from $\Db(X)$ to another category will tacitly
be assumed to be the derived functor. Therefore we will denote just by
$\Rd f_{\ast}$, $\Ld f^{\ast}$, $\Lotimes$ and $\RuHom$ the derived
direct image, inverse image, tensor product and internal Hom.
Finally, we will refer to (complexes of) locally free sheaves by normal upper case
letters (such as $F$) whereas we reserve script upper case letters for
(complexes of) quasi-coherent sheaves in general (for instance $\mathcal{F}$).

\begin{remark}\label{rem:3}
  Because $X$ is in particular a smooth noetherian scheme over $\CC$,
  every object $\mathcal{F}$ of $\Cb(X)$ admits a quasi-isomorphism
  $F\to \mathcal{F}$, with $F$ an object of $\Vb(X)$. Hence, if
  $\mathcal{F}$ is an object in $\Db(X)$, then there is an
  isomorphism $F\dra\mathcal{F}'$ in $\Db(X)$, for some
  object  $F\in\Vb(X)$. In general, the analogous
  statement is no longer true if we work with complex manifolds, as
  shown by the counterexample \cite[Appendix,
  Cor. A.5]{Voisin:counterHcK}.
\end{remark}

For the sake of completeness, we recall how morphisms in $\Db(X)$
between bounded complexes of vector bundles can be represented.
\begin{lemma}\label{lemma:morphisms_Db}
  \begin{enumerate}
  \item \label{item:26} Let $F, G$ be bounded complexes of vector
    bundles on $X$. Every morphism $F\dra G$ in $\Db(X)$ may be
    represented by a diagram in $\Cb(X)$
    \begin{displaymath}
      \xymatrix{
        &E\ar[ld]_{f}\ar[rd]^{g}	&\\
        F	&	&G,
      }
    \end{displaymath}
    where $E\in \Ob \Vb(X)$ and $f$ is a quasi-iso\-morphism.
  \item \label{item:27} Let $E$, $E'$, $F$, $G$ be bounded
    complexes of 
    vector bundles on $X$. Let $f$, $f'$, $g$, $g'$
    be morphisms in $\Cb(X)$ as in the diagram below, with $f$, $f'$
    quasi-isomorphisms.  These data define the same morphism $F \dra
    G$ in $\Db(X)$ if, and only if, there exists a bounded complex of
    vector bundles $E''$ and a diagram
    \begin{displaymath}
      \xymatrix{
	& & E''\ar[ld]_{\alpha}\ar[rd]^{\beta} &	&\\
	&E\ar[ld]_{f}\ar[rrrd]	&{}_g\hspace{2cm} {}_{f'}
        &E'\ar[llld]\ar[rd]^{g'} &\\
	F & &	& &G,
      }
    \end{displaymath}
    whose squares are commutative up to homotopy and where $\alpha$
    and $\beta$ are quasi-isomorphisms.
  \end{enumerate}
\end{lemma}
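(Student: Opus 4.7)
The plan is to reduce both statements to the analogous facts about morphisms in $\Db(X)$ represented by roofs whose middle term lies merely in $\Cb(X)$, and then upgrade those middle terms to objects of $\Vb(X)$ using Remark \ref{rem:3}.

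For part (i), I would begin by recalling that $\Db(X)$ is obtained from the homotopy category of bounded complexes of coherent sheaves by a calculus of fractions at quasi-isomorphisms. Any morphism $F\dra G$ in $\Db(X)$ is therefore represented by a roof $F\xleftarrow{f_{0}}\mathcal{E}_{0}\xrightarrow{g_{0}}G$ with $\mathcal{E}_{0}\in \Ob\Cb(X)$ and $f_{0}$ a quasi-isomorphism. By Remark \ref{rem:3}, there exists a quasi-isomorphism $h\colon E\to \mathcal{E}_{0}$ with $E\in \Ob \Vb(X)$. The composite roof $F\xleftarrow{f_{0}h}E\xrightarrow{g_{0}h}G$ still represents the original morphism and has the required form.

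For part (ii), the ``if'' direction is immediate: the existence of the diagram expresses the equality of the two fractions in the localized category, because homotopic maps of complexes induce equal morphisms in $\Db(X)$. For ``only if'', the calculus-of-fractions criterion gives, from the equality of the two roofs $(f,g)$ and $(f',g')$ in $\Db(X)$, an object $\mathcal{E}''\in\Ob\Cb(X)$ together with quasi-isomorphisms $\alpha_{0}\colon \mathcal{E}''\to E$ and $\beta_{0}\colon \mathcal{E}''\to E'$ such that $f\alpha_{0}$ is homotopic to $f'\beta_{0}$ and $g\alpha_{0}$ is homotopic to $g'\beta_{0}$. Applying Remark \ref{rem:3} once more, I choose a quasi-isomorphism $\gamma\colon E''\to\mathcal{E}''$ with $E''\in \Ob \Vb(X)$ and set $\alpha=\alpha_{0}\gamma$, $\beta=\beta_{0}\gamma$. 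These are quasi-isomorphisms between objects of $\Vb(X)$, and the two homotopy conditions persist after pre-composition with $\gamma$.

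The main obstacle is really bookkeeping rather than conceptual: one must be confident that the right calculus-of-fractions description of $\Db(X)$ is available (so that both existence of a common refinement and the homotopy form of the equivalence relation are guaranteed), and that Remark \ref{rem:3} applies not only in the initial reduction but also to the refinement $\mathcal{E}''$. Once these two ingredients are set up, the rest of the argument is essentially formal, since pre-composing with a morphism of complexes preserves the homotopy relation and since compositions of quasi-isomorphisms are quasi-isomorphisms.
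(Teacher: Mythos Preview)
Your proposal is correct and follows essentially the same approach as the paper: the paper's proof is a one-line appeal to the description of $\Db(X)$ as the localization of the homotopy category of $\Cb(X)$ at quasi-isomorphisms together with Remark~\ref{rem:3}, and you have simply unpacked those two ingredients explicitly.
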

\begin{proof}
  This follows from the equivalence of $\Db(X)$ with the localization
  of the homotopy category of $\Cb(X)$ with respect to the class of
  quasi-isomorphisms and Remark \ref{rem:3}.
\end{proof}

\begin{proposition} \label{prop:7} Let $f\colon \ov E\to \ov E$ be an
  endomorphism in $\oV(X)$ that represents $\Id_{E}$ in $\Db(X)$. Then
  $\ocone(f)$ is meager.
\end{proposition}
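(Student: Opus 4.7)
The key preliminary step is to translate the derived-category hypothesis into concrete information in $\Cb(X)$. Both spans $E \xleftarrow{\Id} E \xrightarrow{f} E$ and $E \xleftarrow{\Id} E \xrightarrow{\Id} E$ represent the morphism $\Id\colon E \dra E$ in $\Db(X)$, so by Lemma \ref{lemma:morphisms_Db}~\ref{item:27} there exists a bounded complex of vector bundles $E''$ and a quasi-isomorphism $s\colon E'' \to E$ such that $f \circ s$ is homotopic to $s$ in $\Cb(X)$. This is the only non-formal input; the rest of the argument is a computation in $\oV(X)/\mathscr{M}$ via the acyclic calculus.

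Endow $E''$ with any smooth hermitian metric; then the square
$$\xymatrix{
\ov{E''} \ar[r]^{s} \ar[d]_{s} & \ov{E} \ar[d]^{f}\\
\ov{E} \ar[r]_{\Id} & \ov{E}
}$$
commutes up to homotopy, placing us in the setting of Lemma \ref{lemm:13}. The lemma, in the symmetric form (\ref{eq:53}), yields a natural isometry
$$\ocone(\ocone(s), \ocone(f)) \cong \ocone(\ocone(-s), \ocone(\Id_{\ov E})).$$

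I then collect the relevant facts: the map $s$ is a quasi-isomorphism, so $\ocone(\pm s)$ is acyclic; $f$ represents $\Id$ in $\Db(X)$, hence is itself a quasi-isomorphism, so $\ocone(f)$ is acyclic; and $\ocone(\Id_{\ov E})$ is meager by Example \ref{exm:2}. Both outer cones above therefore have an acyclic source, so Theorem \ref{thm:7}~\ref{item:35} applies on each side. Using also $[\ocone(\Id_{\ov E})] = 0$ together with the natural isometry $\ocone(-s) \cong \ocone(s)$, I obtain
$$[\ocone(f)] - [\ocone(s)] = -[\ocone(s)]$$
in the group $\KA(X)$, hence $[\ocone(f)] = 0$, and by Theorem \ref{thm:7}~\ref{item:34} the complex $\ocone(f)$ lies in $\mathscr{M}$.

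The only subtle step is the very first one: unpacking $f = \Id$ in $\Db(X)$ into a concrete homotopy-commutative square in $\Cb(X)$. Once that square is at hand, the acyclic calculus mechanizes everything, sparing me from any explicit filtration argument on $\ocone(f)$ and collapsing the proof to a single linear identity in $\KA(X)$.
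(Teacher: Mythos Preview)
Your proof is correct and follows essentially the same approach as the paper: extract from Lemma~\ref{lemma:morphisms_Db}\ref{item:27} a homotopy-commutative square and then apply the acyclic calculus (Lemma~\ref{lemm:13} and Theorem~\ref{thm:7}) to conclude $[\ocone(f)]=0$. The only difference is cosmetic: the paper keeps the two quasi-isomorphisms $\alpha,\beta\colon E'\to E$ supplied by the lemma and runs the cone calculation on two squares, whereas you first use transitivity of homotopy to replace $\alpha,\beta$ by a single $s$ with $f\circ s\simeq s$, reducing to one square.
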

\begin{proof}
  By Lemma \ref{lemma:morphisms_Db}~\ref{item:27}, the fact that $f$
  represents the identity in $\Db(X)$ means that there are diagrams
  \begin{displaymath}
    \xymatrix{
      \ov E' \ar[r]^{\alpha }_{\sim} \ar[d]_{\beta }^{\sim}& \ov E \ar[d]^{\Id_{E}} &&
      \ov E' \ar[r]^{\alpha }_{\sim} \ar[d]_{\beta }^{\sim}& \ov E\ar[d]^{f}\\
      \ov E \ar[r]_{\Id_{E}}& \ov E, && \ov E \ar[r]_{\Id_{E}}& \ov E,
    }
  \end{displaymath}
  that commute up to homotopy. By Theorem \ref{thm:7}~\ref{item:35}
  and \ref{item:36} the equalities
  \begin{align*}
    [\ocone(\alpha )]-[\ocone(\Id_{E})]&=
    [\ocone(\beta )]-[\ocone(\Id_{E})]\\
    [\ocone(\alpha )]-[\ocone(\Id_{E})]&= [\ocone(\beta )]-[\ocone(f)]
  \end{align*}
  hold in the group $\KA(X)$ (observe that these relations do not
  depend on the choice of homotopies).Therefore
  \begin{displaymath}
    [\ocone(f)]=[\ocone(\Id_{E})]=0.
  \end{displaymath}
  Hence $\ocone(f)$ is meager.
\end{proof}

\begin{definition}\label{definition:hermitian_structure}
  Let $\mathcal{F}$ be an object of $\Db(X)$. A \textit{hermitian
    metric on} $\mathcal{F}$ consists of the following data:
  \begin{enumerate}
  \item[--] an isomorphism
    $E\overset{\sim}{\dashrightarrow}\mathcal{F}$ in $\Db(X)$, where
    $E\in \Ob \Vb(X)$;
  \item[--] an object $\ov E\in \Ob \oV(X)$, whose underlying complex
    is $E$.
  \end{enumerate}
  We write $\overline{E}\dashrightarrow\mathcal{F}$ to refer to the
  data above and we call it a \textit{metrized object of} $\Db(X)$.
\end{definition}

Our next task is to define the category $\oDb(X)$, whose objects are
objects of $\Db(X)$ provided with equivalence classes of metrics. We
will show that in this category there is a hermitian cone well defined
up to isometries.

\begin{lemma}\label{lemm:14}
  Let $\ov E, \ov E'\in \Ob(\oV(X))$ and consider an arrow $E\dra E'$ in
  $\Db(X)$.
  Then the following statements are equivalent:
  \begin{enumerate}
  \item \label{item:30} for any diagram
    \begin{equation}\label{diag:2}
      \xymatrix{
        & E'' \ar[dl]_{\sim} \ar[dr]&\\
        E && E',}
    \end{equation}
    that represents $E \dra E'$, and any choice of hermitian metric on
    $E''$, the complex
    \begin{equation}\label{eq:54}
      \ocone(\ov E'',\ov E)[1]\oplus \ocone(\ov E'',\ov E')
    \end{equation}
    is meager;
  \item \label{item:31} there is a diagram (\ref{diag:2})
    that represents $E \dra E'$, and a choice of hermitian metric on
    $E''$, such that the complex (\ref{eq:54}) is meager;
  \item \label{item:31bis}there is a diagram (\ref{diag:2})
    that represents $E \dra E'$, and a choice of hermitian metric on
    $E''$, such that the arrows $\ov E''\to \ov E$ and $\ov E'\to \ov
    E'$ are tight morphisms.
  \end{enumerate}
\end{lemma}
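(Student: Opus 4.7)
The plan is to establish the cycle of implications (iii) $\Rightarrow$ (ii) $\Rightarrow$ (i) $\Rightarrow$ (iii). The direction (iii) $\Rightarrow$ (ii) is essentially immediate: if both arrows are tight, their cones are meager, and the class $\mathscr{M}$ is stable under shifts and direct sums, either of which can be realized as a cone construction covered by Definition \ref{def:9}~\ref{item:13}; hence the complex (\ref{eq:54}) is meager.

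For (ii) $\Rightarrow$ (i), the key tool is the invariant
\begin{displaymath}
\lambda := [\ocone(\ov E'',\ov E')] - [\ocone(\ov E'',\ov E)] \in \KA(X),
\end{displaymath}
which is well defined because both cones are acyclic: $\ov E'' \to \ov E$ is a quasi-isomorphism by hypothesis on the diagram, and $E'' \to E'$ is a quasi-isomorphism since it represents an isomorphism in $\Db(X)$. I would then prove that $\lambda$ depends only on the metrized objects $\ov E, \ov E'$ and the morphism $E \dra E'$, not on the intermediate $\ov E''$. Independence of the metric on the underlying complex $E''$ follows from the composition rule in Theorem \ref{thm:7}~\ref{item:41}: factoring the maps through the identity between two metrics on $E''$ adds the same correction to both cone classes, which then cancels in the difference. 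Independence of the roof uses Lemma \ref{lemma:morphisms_Db}~\ref{item:27} to dominate two roofs by a third and applies Theorem \ref{thm:7}~\ref{item:41} on the dominating complex, after observing that cones of homotopic morphisms have the same class in $\KA(X)$; this last fact follows from Theorem \ref{thm:7}~\ref{item:36} applied to an identity square combined with Example \ref{exm:2}. Condition (ii) then amounts to $\lambda = 0$, and since the class in (\ref{eq:54}) equals $-\lambda$ for every roof and metric, (i) follows.

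The main obstacle is (i) $\Rightarrow$ (iii), which requires exhibiting a roof where both arrows are tight. Starting from an arbitrary roof $\ov E \xleftarrow{f} \ov E'' \xrightarrow{g} \ov E'$, the strategy is to enlarge $\ov E''$ so that the left arrow becomes tight; the invariance and vanishing of $\lambda$ forced by (i) will then automatically make the right arrow tight as well. The candidate construction is $\ov E''' := \ov E'' \oplus \ocone(f)[-1]$ with the orthogonal sum metric, together with morphisms to $\ov E$ and $\ov E'$ built from $f$, $g$ and the natural morphism $\ocone(f)[-1] \to \ov E''$ coming from the cone triangle. Using the isometry (\ref{eq:53}) together with Example \ref{exm:2}, the cone $\ocone(\ov E''' \to \ov E)$ can be identified up to isometry with a complex in $\mathscr{M}_F$, hence meager by Theorem \ref{thm:3}. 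The delicate step will be to fix these maps so that the new roof still represents the morphism $E \dra E'$ in $\Db(X)$; once this is verified, the invariance of $\lambda$ combined with (i) forces $[\ocone(\ov E''' \to \ov E')] = 0$, and Theorem \ref{thm:7}~\ref{item:34} then guarantees that this cone is also meager, which is (iii).
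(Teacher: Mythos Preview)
Your proposal is correct and matches the paper's proof: the invariance of $\lambda$ under change of roof and metric is exactly the paper's argument for (ii) $\Rightarrow$ (i) via Lemma~\ref{lemma:morphisms_Db}~\ref{item:27} and acyclic calculus, and your enlargement $\ov E''\oplus\ocone(f)[-1]$ is (up to a shift) the construction of Lemma~\ref{lemm:16}, which the paper invokes verbatim for (ii) $\Leftrightarrow$ (iii). Two minor points: the class of the complex (\ref{eq:54}) equals $+\lambda$, not $-\lambda$; and for the new roof it is simpler to take the maps to $\ov E$ and $\ov E'$ induced by the first projection $\ov E'''\to\ov E''$ (rather than via the cone-triangle map), since then the added summand is acyclic and it is immediate that the new diagram represents the same morphism in $\Db(X)$.
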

\begin{proof}
  Clearly \ref{item:30} implies \ref{item:31}. To prove the converse
  we assume the existence of a $\ov E''$ such that the complex
  \eqref{eq:54} is meager, and let $\ov E'''$ be any complex that satisfies the
  hypothesis of \ref{item:30}. Then there is a diagram
  \begin{displaymath}
    \xymatrix{
      & & E^{''''}\ar[ld]_{\alpha}\ar[rd]^{\beta} &	&\\
      &E''\ar[ld]_{f}\ar[rrrd]	&{}_g\hspace{2cm} {}_{f'}
      &E'''\ar[llld]\ar[rd]^{g'} &\\
      E & &	& &E'
    }
  \end{displaymath}
    whose squares commute up to homotopy. Using acyclic calculus we have
  \begin{multline*}
    [\ocone(g')]-[\ocone(f')]=\\
    [\ocone(\beta )]+[\ocone(g)]-[\ocone(\alpha )]
    -[\ocone(\beta )]-[\ocone(f)]+[\ocone(\alpha )]=\\
    [\ocone(g)]-[\ocone(f)]=0.
  \end{multline*}
  Now repeat the argument of Lemma \ref{lemm:16} to prove that
  \ref{item:31} and \ref{item:31bis} are equivalent. The only point is
  to observe that the diagram constructed in Lemma \ref{lemm:16}
  represents the same morphism in the derived category as the
  original diagram.
\end{proof}

\begin{definition}\label{def:13}
  Let $\mathcal{F}\in \Ob\Db(X)$ and let $\ov E\dra \mathcal{F}$ and
  $\ov E'\dra \mathcal{F}$ be two hermitian metrics on
  $\mathcal{F}$. We say that they \emph{fit tight} if the induced
  arrow $\ov E\dra\ov E'$ satisfies any of the equivalent conditions
  of Lemma \ref{lemm:14}
\end{definition}

\begin{theorem} \label{thm:5} The relation ``to fit tight'' is an
  equivalence relation.
\end{theorem}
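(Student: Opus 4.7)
The plan is to verify reflexivity, symmetry and transitivity using condition \ref{item:31bis} of Lemma \ref{lemm:14}, which gives the most geometric formulation in terms of tight morphisms. Throughout we freely use Proposition \ref{prop:6} (composition of tight morphisms is tight), Lemma \ref{lemm:15} (completion of roofs of tight morphisms) and Example \ref{exm:2}.

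For reflexivity, given a metric $\ov E\dra \mathcal{F}$, one represents the identity $E\dra E$ by the roof with apex $\ov E$ and both arrows equal to $\Id_{\ov E}$. Then $\ocone(\Id_{\ov E})$ is meager by Example \ref{exm:2}, so $\Id_{\ov E}$ is tight and condition \ref{item:31bis} holds.

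For symmetry, if $\ov E\dra \ov E'$ admits a roof
\begin{displaymath}
\xymatrix{
  & \ov E'' \ar[dl]_{\sim}_{f} \ar[dr]^{g} & \\
  \ov E & & \ov E'
}
\end{displaymath}
with $f$ and $g$ tight that represents the isomorphism $E\dra E'$ in $\Db(X)$, then the same apex with the roles of $f$ and $g$ exchanged represents the inverse isomorphism $E'\dra E$ and satisfies \ref{item:31bis} since tightness is unchanged by relabelling the legs.

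For transitivity, suppose $\ov E\dra \mathcal{F}$ and $\ov E'\dra \mathcal{F}$ fit tight, and $\ov E'\dra \mathcal{F}$ and $\ov E''\dra \mathcal{F}$ fit tight. Choose roofs
\begin{displaymath}
\xymatrix{
 & \ov G \ar[dl]_{f}^{\sim} \ar[dr]^{g} & & \ov H \ar[dl]_{f'}^{\sim} \ar[dr]^{g'} & \\
 \ov E & & \ov E' & & \ov E''
}
\end{displaymath}
with all four legs tight, representing the two isomorphisms in $\Db(X)$. The pair $(g,f')$ forms a diagram of type (i) of Lemma \ref{lemm:15}, which we may complete to a square of tight morphisms
\begin{displaymath}
\xymatrix{
 & \ov K \ar[dl]_{p} \ar[dr]^{q} & \\
 \ov G \ar[dr]_{g} & & \ov H \ar[dl]^{f'} \\
 & \ov E' &
}
\end{displaymath}
commuting up to homotopy. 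By Proposition \ref{prop:6} the compositions $f\circ p\colon \ov K\to \ov E$ and $g'\circ q\colon \ov K\to \ov E''$ are tight. Moreover, since the square commutes up to homotopy in $\Cb(X)$, the roof with apex $\ov K$ and legs $f\circ p, g'\circ q$ represents the composition of the two original isomorphisms in $\Db(X)$, i.e.\ the induced arrow $E\dra E''$. Therefore the metrics $\ov E\dra \mathcal{F}$ and $\ov E''\dra \mathcal{F}$ fit tight.

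The main obstacle is transitivity, but it is entirely dispatched by Lemma \ref{lemm:15}: that lemma is precisely designed to produce a common refinement of two tight roofs meeting at a common vertex, and Proposition \ref{prop:6} then ensures that the composed legs remain tight. No verification of the meager condition \ref{item:31} is needed once condition \ref{item:31bis} has been checked.
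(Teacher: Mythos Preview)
Your proof is correct and follows essentially the same approach as the paper: reflexivity and symmetry are immediate (the paper simply calls them ``obvious''), and transitivity is established by invoking Lemma~\ref{lemm:15} to complete the pair of tight roofs meeting at $\ov E'$ to a common refinement, then using Proposition~\ref{prop:6} to conclude that the composed legs are tight. Your write-up is slightly more detailed about reflexivity (explicitly citing Example~\ref{exm:2}) and about why the resulting roof represents the composite morphism in $\Db(X)$, but the argument is the same.
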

\begin{proof}

  The reflexivity and the symmetry are obvious. To prove the
  transitivity, consider a diagram
  \begin{displaymath}
    \xymatrix{
      & \ov F \ar[dl]_{f} \ar[dr]^{g}& & \ov F' \ar[dl]_{f'} \ar[dr]^{g'} &\\
      \ov E & & \ov E' & & \ov E'' ,
    }
  \end{displaymath}
  where all the arrows are tight morphisms and $f$, $f'$ are quasi-isomorphisms. By Lemma \ref{lemm:15},
  this diagram can be completed into a diagram
  \begin{displaymath}
    \xymatrix{
      & & \ov F'' \ar[dl]_{\alpha } \ar[dr]^{\beta }& & \\
      & \ov F \ar[dl]_{f} \ar[dr]^{g}& & \ov F' \ar[dl]_{f'} \ar[dr]^{g'} &\\
      \ov E & & \ov E' & & \ov E'' ,
    }
  \end{displaymath}
  where all the arrows are tight morphisms and the square commutes up
  to homotopy. Now observe that $f\circ\alpha$ and $g'\circ\beta$
  represent the morphism $E\dra E''$ in $\Db(X)$ and are both tight
  morphisms by Proposition \ref{prop:6}. This finishes the proof.

\end{proof}

\begin{definition}\label{def:category_oDb}
  We denote by $\oDb(X)$ the category whose objects are pairs
  $\ocF=(\mathcal{F},h)$ where $\mathcal{F}$ is an object of $\Db(X)$
  and $h$ is an equivalence class of metrics that fit tight, and with
  morphisms
  \begin{displaymath}
    \Hom_{\oDb(X)}(\overline{\mathcal{F}},\overline{\mathcal{G}})
    =
    \Hom_{\Db(X)}(\mathcal{F},\mathcal{G}).
  \end{displaymath}
  A class $h$ of metrics will be called \emph{a hermitian structure},
  and may be referenced by any representative $\ov E\dra \mathcal{F}$
  or, if the arrow is clear, by the complex $\ov E$. We will denote by
  $\ov 0\in \Ob \oDb(X)$ a zero object of $\Db(X)$ provided with a
  trivial hermitian structure given by any meager complex.

  If the underlying complex to an object $\ov{\mathcal{F}}$ is
  acyclic, then its hermitian structure has a well defined class in
  $\KA(X)$. We will use the notation $[\ov{\mathcal{F}}]$ for this
  class.
\end{definition}
\begin{definition}
  A morphism in $\oDb(X)$, $f\colon (\ov E\dashrightarrow\mathcal{F})
  \dashrightarrow(\ov F\dra \mathcal{G})$, is called \emph{a tight
    isomorphism} whenever the underlying morphism $f\colon
  \mathcal{F}\dra \mathcal{G}$ is an isomorphism and the metric on
  $\mathcal{G}$ induced by $f$ and $\ov E$ fits tight with $\ov F$. An
  object of $\oDb(X)$ will be called \emph{meager} if it is tightly
  isomorphic to the zero object with the trivial metric.
\end{definition}
\begin{remark} \label{rem:5} A word of warning should be said about
  the use of acyclic calculus to show that a particular map is a tight
  isomorphism. There is an assignment $\Ob\oDb(X)\to
  \oV(X)/\mathscr{M}$ that sends $\ov E\dra \mathcal{F}$ to $[\ov
  E]$. This assignment is not injective. For instance, let $r>0$ be a
  real number and consider the trivial bundle $\mathcal{O}_{X}$ with
  the trivial metric $\|1\|=1$ and with the metric $\|1\|'=1/r$. Then
  the product by $r$ induces an isometry between both bundles. Hence,
  if $\ov E$ and $\ov E'$ are the complexes that have
  $\mathcal{O}_{X}$ in degree $0$ with the above hermitian metrics,
  then $[\ov E]=[\ov E']$, but they define different hermitian
  structures on $\mathcal{O}_{X}$ because the product by $r$ does not
  represent $\Id_{\mathcal{O}_{X}}$.

  Thus the right procedure to show that a morphism $f\colon (\ov E\dra
  \mathcal{F})\dra (\ov F\dra \mathcal{G})$ is a tight isomorphism, is
  to construct a diagram
  \begin{displaymath}
    \xymatrix{
      & \ov G \ar[dl]^{\sim}_{\alpha } \ar[dr]^{\beta }&\\
      \ov E && \ov F}
  \end{displaymath}
  that represents $f$ and use the acyclic calculus to show that
  $[\ocone(\beta )]-[\ocone(\alpha )]=0$.
\end{remark}

By definition, the forgetful functor $\mathfrak{F}\colon \oDb(X)\to
\Db(X)$ is fully faithful. The structure of this functor will be given
in the next result that we suggestively summarize by saying that
$\oDb(X)$ is a principal fibered category over $\Db(X)$ with
structural group $\KA(X)$ provided with a flat connection.

\begin{theorem}\label{thm:13}
  The functor $\mathfrak{F}\colon \oDb(X)\to \Db(X)$ defines a
  structure of category fibered in grupoids. Moreover
  \begin{enumerate}
  \item \label{item:43} The fiber $\mathfrak{F}^{-1}(0)$ is the
    grupoid associated to the abelian group
    $\KA(X)$. The object $\ov 0$ is the neutral element of $\KA(X)$.
  \item \label{item:44} For any object $\mathcal{F}$ of $\Db(X)$, the
    fiber $\mathfrak{F}^{-1}(\mathcal{F})$ is the grupoid associated
    to a torsor over $\KA(X)$. The action of $\KA(X)$ over
    $\mathfrak{F}^{-1}(\mathcal{F})$ is given by orthogonal direct
    sum. We will denote this action by $+$.
  \item \label{item:45} Every isomorphism $f\colon \mathcal{F}\dra
    \mathcal{G}$ in $\Db(X)$ determines an isomorphism of
    $\KA(X)$-torsors
    \begin{displaymath}
      \mathfrak{t}_{f}\colon \mathfrak{F}^{-1}(\mathcal{F})\longrightarrow
      \mathfrak{F}^{-1}(\mathcal{G}),
    \end{displaymath}
    that sends the hermitian structure $\ov E\overset{\epsilon }{\dra}
    \mathcal{F}$ to the hermitian structure $\ov E\overset{f\circ
      \epsilon }{\dra} \mathcal{G}$. This isomorphism will be called
    the parallel transport along $f$.
  \item \label{item:46} Given two isomorphisms $f\colon
    \mathcal{F}\dra \mathcal{G}$ and $g\colon \mathcal{G}\dra
    \mathcal{H}$, the equality
  $$\mathfrak{t}_{g\circ f}=\mathfrak{t}_{g}\circ \mathfrak{t}_{f}$$
  holds.
\end{enumerate}

\end{theorem}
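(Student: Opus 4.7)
This part is essentially formal. By Definition \ref{def:category_oDb}, $\mathfrak{F}$ is bijective on $\Hom$-sets, hence every morphism in $\oDb(X)$ is Cartesian over its image in $\Db(X)$. Given $f\colon\mathcal{F}\dra\mathcal{G}$ in $\Db(X)$ and $\ov{\mathcal{G}}\in \mathfrak{F}^{-1}(\mathcal{G})$, Remark \ref{rem:3} furnishes some hermitian structure $\ov{\mathcal{F}}$ on $\mathcal{F}$, and $f$ then lifts to a morphism $\ov{\mathcal{F}}\dra \ov{\mathcal{G}}$ in $\oDb(X)$. The only morphisms in a fiber $\mathfrak{F}^{-1}(\mathcal{F})$ lie over $\Id_{\mathcal{F}}$, hence \emph{are} $\Id_{\mathcal{F}}$ and are invertible.

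\textbf{Torsor structure, items (i)--(ii).} Define $\alpha+(\ov E\overset{\epsilon}{\dra}\mathcal{F}):=(\ov E\oplus \ov A\dra\mathcal{F})$, where $\ov A$ is an acyclic representative of $\alpha\in\KA(X)$ and the derived-category isomorphism is $\epsilon$ on $\ov E$ and zero on the acyclic summand $\ov A$. For well-definedness on fit-tight classes, observe that if $[\ov A]=[\ov A']$ in $\KA(X)$ then, by Theorem \ref{thm:7}\ref{item:39}, the complex $\ov A\oplus \ov A'[1]$ is meager, so the natural roof $\ov E\oplus \ov A\leftarrow \ov E\oplus \ov A\oplus \ov A'\oplus \ov A'[1]\to \ov E\oplus \ov A'$ has both legs tight by Theorem \ref{thm:7}\ref{item:35}. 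Freeness: if $\alpha+\ov{\mathcal{E}}$ fits tight with $\ov{\mathcal{E}}$, then applying Theorem \ref{thm:7} items \ref{item:35}--\ref{item:40} to any witnessing roof forces $[\ov A]=0$, i.e.\ $\alpha=0$. Transitivity: given two structures $\ov E,\ov E'$ on $\mathcal{F}$, represent the composite isomorphism $\ov E\overset{\sim}{\dra}\mathcal{F}\overset{\sim}{\dra}\ov E'$ by a roof $\ov E\overset{a}{\leftarrow}\ov E''\overset{b}{\to}\ov E'$ (Lemma \ref{lemma:morphisms_Db}\ref{item:26}); both $\ocone(a),\ocone(b)$ are acyclic and one sets $\alpha:=[\ocone(b)]-[\ocone(a)]\in\KA(X)$. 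Independence of the roof follows from Lemma \ref{lemma:morphisms_Db}\ref{item:27} together with Theorem \ref{thm:7}\ref{item:41}, and the identity $\alpha+\ov E\simeq \ov E'$ in fit-tight classes is established by a direct diagram chase with roofs and cones. The trivialization by $\ov 0$ in the fiber over $0$ yields item (i).

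\textbf{Parallel transport, items (iii)--(iv).} Set $\mathfrak{t}_f(\ov E\overset{\epsilon}{\dra}\mathcal{F}):=(\ov E\overset{f\circ\epsilon}{\dra}\mathcal{G})$. Fit-tightness is preserved: any roof witnessing equivalence of two metrics over $\mathcal{F}$ remains a roof witnessing equivalence of the transported metrics over $\mathcal{G}$, because neither the underlying complexes nor their cones change. The $\KA(X)$-action is likewise unchanged, so $\mathfrak{t}_f$ is an isomorphism of $\KA(X)$-torsors, with inverse $\mathfrak{t}_{f^{-1}}$. Functoriality $\mathfrak{t}_{g\circ f}=\mathfrak{t}_g\circ \mathfrak{t}_f$ follows from the associativity of composition in $\Db(X)$.

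\textbf{Main obstacle.} The hardest step is the transitivity of the $\KA(X)$-action: showing that the class $\alpha$ built from a roof is independent of the choice of roof, and that it genuinely realizes $\ov E'$ as $\alpha+\ov E$ up to fit-tight equivalence. Both points rest on repeated use of the cone-of-cones identity in Theorem \ref{thm:7}\ref{item:41} and of Lemma \ref{lemma:morphisms_Db}\ref{item:27}, applied to compare distinct roofs and reduce every ambiguity to an equality in the group $\KA(X)$.
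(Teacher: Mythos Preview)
Your approach is essentially the paper's. Two points deserve sharpening.

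\emph{Freeness.} Your appeal to Theorem~\ref{thm:7}\ref{item:35}--\ref{item:40} does not apply: those items compute $[\ocone(f)]$ only when one end of $f$ is acyclic, and for an arbitrary witnessing roof $\ov E\leftarrow\ov E''\to\ov E\oplus\ov A$ none of the three complexes need be acyclic. The paper's fix is to use the specific roof given by the inclusion itself, namely $\ov E\overset{\Id}{\leftarrow}\ov E\overset{\iota}{\to}\ov E\oplus\ov A$, which does represent the induced arrow $E\dra E\oplus A$. Then Lemma~\ref{lemm:14}\ref{item:30} forces $\ocone(\Id)[1]\oplus\ocone(\iota)$ to be meager, hence $\ocone(\iota)$ is meager; and since $\ocone(\iota)=\ocone(\Id_{\ov E})\oplus\ov A$, one concludes that $\ov A$ is meager.

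\emph{Transitivity.} Your class $\alpha=[\ocone(b)]-[\ocone(a)]$ is exactly the right invariant, but the phrase ``direct diagram chase with roofs and cones'' is where the only genuine work sits. The paper makes this explicit: first replace $\ov E''$ by $\ov E''\oplus\ocone(a)$ so that the left leg becomes tight; then the morphism $\ov E''\to\ov E'\oplus\ocone(b)[1]$ induced by $b$ has cone $\ocone(b)\oplus\ocone(b)[1]$, which is meager. Thus $\ov E$ fits tight with $\ov E'\oplus\ocone(b)[1]$, exhibiting the translation by $[\ocone(b)[1]]$, which equals your $\alpha$ once $a$ is tight.
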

\begin{proof} Recall that $\mathfrak{F}^{-1}(\mathcal{F})$ is the
  subcategory of $\oDb(X)$ whose objects satisfy
  $\mathfrak{F}(A)=\mathcal{F}$ and whose morphisms satisfy
  $\mathfrak{F}(f)=\Id_{\mathcal{F}}$. The first assertion is
  trivial. To prove that $\mathfrak{F}^{-1}(\mathcal{F})$ is a torsor
  under $\KA(X)$, we need to show that $\KA(X)$ acts freely and
  transitively on this fiber. For the freeness, it is enough to
  observe that if for $\ov E\in\oV(X)$ and $\ov M\in\oVo(X)$, the
  complexes $\ov E$ and $\ov E\oplus \ov M$ represent the same
  hermitian structure, then the inclusion $\ov E\hookrightarrow \ov
  E\oplus\ov M$ is tight. Hence $\ocone(\ov E, \ov E\oplus \ov M)$ is
  meager. Since
  \begin{displaymath}
    \ocone(\ov E, \ov E\oplus \ov M)=\ocone(\ov E, \ov E)\oplus \ov M
  \end{displaymath}
  and $\ocone(\ov E, \ov E)$ is meager, we deduce that $\ov M$ is
  meager. For the transitivity, any two hermitian structures on
  $\mathcal{F}$ are related by a diagram
  \begin{displaymath}
    \xymatrix{
      &\ov E''\ar[ld]_{\sim}^{f}\ar[rd]^{\sim}_{g}	&\\
      \ov E	& &\ov E'.
    }
  \end{displaymath}
  After possibly replacing $\ov E''$ by $\ov E''\oplus\ocone(f)$, we
  may assume that $f$ is tight. We consider the natural arrow $\ov
  E''\to \ov E'\oplus\ocone(g)[1]$ induced by $g$. Observe that
  $\ocone(g)[1]$ is acyclic. Finally, we find
  \begin{displaymath}
    \ocone(\ov E'', \ov E'\oplus\ocone(g)[1])=\ocone(g)\oplus\ocone(g)[1],
  \end{displaymath}
  that is meager. Thus the hermitian structure represented by $\ov
  E''$ agrees with the hermitian structure represented by $\ov
  E'\oplus\ocone(g)[1]$.

  The remaining properties are straightforward.
\end{proof}

Our next objective is to define the cone of a morphism in
$\oDb(X)$. This will be an object of $\oDb(X)$ uniquely defined up to
tight isomorphism. Let $f\colon
(\overline{E}\dashrightarrow\mathcal{F})\dra
(\overline{E}'\dashrightarrow\mathcal{G})$
be a morphism in $\oDb(X)$, where $\ov E$ and $\ov E'$ are
representatives of the hermitian structures.

 \begin{definition}\label{def:her_cone}
   A \textit{hermitian cone} of $f$, denoted $\ocone(f)$, is an
   object $(\cone(f),h_{f})$ of $\oDb(X)$ where:
   \begin{enumerate}
   \item[--] $\cone(f)\in\Ob\Db(X)$ is a choice of cone of $f$. Namely
     an object of $\Db(X)$ completing $f$ into a distinguished
     triangle;
   \item[--] $h_{f}$ is a hermitian structure on $\cone(f)$
     constructed as follows. The morphism $f$ induces an arrow $E\dra
     E'$. Choose any bounded complex $E''$ of vector bundles with a
     diagram
     \begin{displaymath}
       \xymatrix{
         &E''\ar[ld]^{\sim}\ar[rd]	&\\
         E	&	&E'
       }
     \end{displaymath}
     that represents $E\dra E'$, and an arbitrary hermitian metric on
     $E''$. Put
     \begin{equation}\label{eq:her_cone}
       \ov{C}(f)=\ocone(\ov{E}'',\ov{E})[1]\oplus\ocone(\ov{E}'',\ov{E}').
     \end{equation}
     There are morphisms defined as compositions
     \begin{displaymath}
       \ov{E}'\longrightarrow \ocone(\ov{E}'',\ov{E}')
       \longrightarrow \ov{C}(f),
     \end{displaymath}
     where the second arrow is the natural inclusion, and
     \begin{displaymath}
       \ov{C}(f) \longrightarrow \ocone(\ov{E}'',\ov{E}')
       \longrightarrow \ov{E}''[1] \longrightarrow
       \ov{E}[1],
     \end{displaymath}
     where the first arrow is the natural projection.
     These morphisms fit into a natural distinguished triangle completing
     $\ov{E}\dra\ov{E}'$. By the axioms of triangulated category,
     there is a quasi-isomorphism $\ov{C}(f)\dra\cone(f)$ such that
     the following diagram (where the rows are distinguished triangles)
     \begin{displaymath}
       \xymatrix{
         \ov{E}\ar@{-->}[r]\ar@{-->}[d]	&\ov{E}'\ar@{-->}[r]\ar@{-->}[d]
         &\ov{C}(f)\ar@{-->}[d]\ar@{-->}[r]	&\ov{E}[1]\ar@{-->}[d]\\
         \mathcal{F}\ar@{-->}[r]	&\mathcal{G}\ar@{-->}[r]
         &\cone(f)\ar@{-->}[r]	&\mathcal{F}[1]
       }
     \end{displaymath}
     commutes. We take the hermitian structure that
     $\ov{C}(f)\dra\cone(f)$ defines on $\cone(f)$. By Theorem
     \ref{thm:6bis} below, this hermitian structure does not depend on
     the particular choice of arrow $\ov{C}(f)\dra\cone(f)$. Moreover,
     by Theorem \ref{thm:6}, the hermitian structure will not depend
     on the choices of representatives of hermitian structures nor on
     the choice of $\ov{E}''$.
   \end{enumerate}
 \end{definition}
 \begin{remark}\label{rem:8}
   The factor $\ocone(\ov{E}'',\ov{E})[1]$ has to be seen as a
   correction term to take into account the difference of metrics from
   $\ov E $ and $\ov E''$. We would have obtained an equivalent
   definition using the factor $\ocone(\ov{E}'',\ov{E})[-1]$.
 \end{remark}

 \begin{theorem}\label{thm:6bis}
   Let
   \begin{displaymath}
     \xymatrix{
       \mathcal{F}\ar@{-->}[r]\ar[d]^{\Id}	
       &\mathcal{G}\ar@{-->}[r]\ar[d]^{\Id}
       &\mathcal{H}\ar@{-->}[r]\ar@{-->}[d]^{\alpha }
       & \mathcal{F}[1]\ar@{-->}[r]	\ar[d]^{\Id}
       &\dots\\
       \mathcal{F}\ar@{-->}[r]	
       &\mathcal{G}\ar@{-->}[r]
       &\mathcal{H}\ar@{-->}[r]
       & \mathcal{F}[1]\ar@{-->}[r]
       &\dots
     }
   \end{displaymath}
   be a commutative diagram in $\Db(X)$, where the rows are the same
   distinguished triangle. Let $\ov H\dra \mathcal{H}$ be any
   hermitian structure. Then $\alpha\colon (\ov H\dra
   \mathcal{H})\dashrightarrow (\ov H\dra \mathcal{H}) $ is a tight
   isomorphism.
 \end{theorem}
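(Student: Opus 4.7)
The plan is to exploit the principal $\KA(X)$-torsor structure of $\mathfrak{F}^{-1}(\mathcal{H})$ established in Theorem \ref{thm:13} in order to reduce the assertion to the existence of a single fixed hermitian structure. By Theorem \ref{thm:13}~\ref{item:44}--\ref{item:46}, parallel transport $\mathfrak{t}_{\alpha}$ is an automorphism of the $\KA(X)$-torsor $\mathfrak{F}^{-1}(\mathcal{H})$, hence acts as translation by a single element $\delta(\alpha)\in\KA(X)$ (note that $\alpha$ is an isomorphism in $\Db(X)$, e.g.\ by the triangulated five-lemma applied to the given commutative diagram). Therefore the statement is equivalent to $\delta(\alpha)=0$, and to prove this it suffices to exhibit \emph{one} hermitian structure on $\mathcal{H}$ that $\alpha$ preserves up to tight equivalence.

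A canonical such structure is produced by the hermitian cone construction itself. First choose arbitrary hermitian representatives $\ov E\dra\mathcal{F}$ and $\ov E'\dra\mathcal{G}$, together with a hermitian roof $\ov E\xleftarrow{\sim}\ov E''\to\ov E'$ lifting the morphism $a\colon\mathcal{F}\to\mathcal{G}$, and form $\ov C:=\ocone(\ov E'',\ov E)[1]\oplus\ocone(\ov E'',\ov E')$ as in Definition \ref{def:her_cone}. The axioms of triangulated categories furnish a quasi-isomorphism $\psi\colon\ov C\dra\mathcal{H}$ completing the first row into a commutative diagram of triangles; this yields a hermitian structure $h_{\psi}$ on $\mathcal{H}$. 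Since the two rows of the diagram in the hypothesis are literally the same triangle, $\psi':=\alpha\circ\psi$ is another quasi-isomorphism $\ov C\dra\mathcal{H}$ completing the (same) triangle, and $\mathfrak{t}_{\alpha}(h_{\psi})$ is represented by $(\ov C,\psi')$. The problem is thus reduced to showing that $(\ov C,\psi)$ and $(\ov C,\psi')$ represent the \emph{same} hermitian structure on $\mathcal{H}$.

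This last assertion is, by Lemma \ref{lemm:14} and Definition \ref{def:13}, the requirement that the morphism $\varphi:=\psi^{-1}\circ\alpha^{-1}\circ\psi\in\Hom_{\Db(X)}(\ov C,\ov C)$ admit a roof representation with both legs tight. The plan is to lift $\varphi$ to an endomorphism $\tilde\varphi\colon\ov C\to\ov C$ in $\oV(X)$ by using the explicit orthogonally split sequence $0\to\ov E'\to\ov C\to\ov E''[1]\to0$ underlying Remark \ref{rem:2}: the relations $\alpha\circ b=b$ and $c\circ\alpha=c$, pulled back along $\psi$ and combined with the concrete description of $\ov C$, ensure that $\tilde\varphi$ can be chosen to induce the identity on both $\ov E'$ and $\ov E''[1]$, so that $\tilde\varphi-\Id_{\ov C}$ factors through a morphism $\ov E''[1]\to\ov E'$ in $\oV(X)$. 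Acyclic calculus (Theorem \ref{thm:7}~\ref{item:36}, \ref{item:41}) together with Proposition \ref{prop:7} applied to $\Id_{\ov C}$ then yields $[\ocone(\tilde\varphi)]=[\ocone(\Id_{\ov C})]=0$ in $\KA(X)$, so that $\tilde\varphi$ is a tight morphism and hence realises $\varphi$ as a tight isomorphism in the sense of Remark \ref{rem:5}.

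The main obstacle lies in this last step: producing the explicit lift $\tilde\varphi$ and exhibiting the compatibility of $\alpha$ with the two split pieces of $\ov C$. The difficulty is intrinsic, since $\alpha$ is in general not equal to $\Id_{\mathcal{H}}$ in $\Db(X)$ and so Proposition \ref{prop:7} cannot be applied to $\varphi$ directly; one has to extract, from the $\Db(X)$-level commutativity $\alpha\circ b=b$ and $c\circ\alpha=c$, homotopy-level data relative to chosen hermitian roofs and then control the resulting cones by repeated application of Lemma \ref{lemm:13} and Theorem \ref{thm:7}. Once this lift is constructed, the meagerness of $\ocone(\tilde\varphi)$ is a formal consequence of the acyclic calculus.
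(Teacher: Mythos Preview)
Your overall strategy --- reduce via the $\KA(X)$-torsor structure to a single hermitian structure on $\mathcal{H}$, and take that structure to be the hermitian cone $\ov C$ --- is exactly the reduction the paper makes (the paper phrases it as ``$\gamma^{-1}\alpha\gamma$ is tight iff $\alpha$ is tight'', which is the same thing without invoking Theorem~\ref{thm:13}). So the plan is sound up to the point where you try to control $\varphi=\psi^{-1}\alpha\psi$.

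The gap is in the mechanism you propose for that last step. You ask for a \emph{single} morphism $\tilde\varphi\colon\ov C\to\ov C$ in $\oV(X)$ representing $\varphi$ and restricting to the identity on both pieces of the orthogonal splitting $\ov E'\hookrightarrow\ov C\twoheadrightarrow\ov E''[1]$. There are two problems. First, a morphism in $\Db(X)$ is in general only a roof $\ov C\xleftarrow{u}\ov Q\xrightarrow{v}\ov C$, and nothing guarantees that $\varphi$ lifts to a single arrow of complexes. Second, even if such a lift existed, the relations $\varphi\circ\iota=\iota$ and $\pi\circ\varphi=\pi$ hold only in $\Db(X)$; they do \emph{not} force $\tilde\varphi|_{\ov E'}=\Id$ or the induced map on $\ov E''[1]$ to be $\Id$ on the nose (or even up to homotopy), so the ``upper-triangular with identity diagonal'' picture is not available. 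Your concluding step --- that such a $\tilde\varphi$ would have meager cone via the filtration by $\ocone(\Id_{\ov E'})$ and $\ocone(\Id_{\ov E''[1]})$ --- is correct, but the hypothesis feeding into it cannot be arranged.

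The paper confronts exactly this: it keeps the roof $(u,v)$ for $\phi$ and a second roof $(j,g)$ witnessing $\phi\circ\iota=\iota$, then passes to the induced maps $\tilde u,\tilde v\colon\ocone(g)\to\ocone(\iota)$. The key observation is that the natural projection $\tilde\pi\colon\ocone(\iota)\to\ov E[1]$ equalizes $\tilde u$ and $\tilde v$ up to homotopy, which gives $[\ocone(\tilde u)]=[\ocone(\tilde v)]$ by acyclic calculus. Unwinding via orthogonally split sequences yields $[\ocone(u)]=[\ocone(v)]$, i.e.\ $[\ocone(u)[1]]+[\ocone(v)]=0$, which is precisely the tightness of $\varphi$. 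This is the ``homotopy-level data relative to chosen hermitian roofs'' you allude to in your last paragraph, but the concrete device --- compare the two legs of the roof after pushing to $\ocone(\iota)$ and then to $\ov E[1]$ --- is the missing idea in your plan.
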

 \begin{proof}
   First of all, we claim that if
   $\gamma:\ov{\mathcal{B}}\dra\ov{\mathcal{H}}$ is any isomorphism,
   then $\gamma^{-1}\circ\alpha\circ\gamma$ is tight if, and only if,
   $\alpha$ is tight. Indeed, denote by $\ov G\dra\mathcal{B}$ a
   representative of the hermitian structure on
   $\ov{\mathcal{B}}$. Then there is a diagram
   \begin{displaymath}
     \xymatrix{
       &	&	&\ov
       R\ar[ld]^{t_{1}}_{\sim}\ar[rd]_{t_{2}}^{\sim}	&	&\\
       &	&\ov P\ar[ld]^{w_{1}}_{\sim}\ar[rd]_{w_{2}}^{\sim}
       &	&\ov Q\ar[ld]^{w_{3}}_{\sim}\ar[rd]_{w_{4}}
       ^{\sim}&\\
       &\ov{G}'\ar[ld]_{\sim}^{u}\ar[rd]^{\sim}_{v}	&
       &\ov{H}'\ar[rd]^{\sim}_{f}\ar[ld]_{\sim}^{g}	&
       &\ov{G}'\ar[ld]_{\sim}^{v}\ar[rd]^{\sim}_{u}&\\
       \ov G\ar@{-->}[rr]	&	&\ov H\ar@{-->}[rr]	&
       &\ov H\ar@{-->}[rr]	&	&\ov G
     }
   \end{displaymath}
   for the liftings of $\gamma^{-1}$, $\alpha$, $\gamma$ to
   representatives, as well as for their composites, all whose squares
   are commutative up to homotopy. By acyclic calculus, we have the
   following chain of equalities
   \begin{multline*}
     [\ocone(u\circ w_{1}\circ t_{1})[1]]+[\ocone(u\circ w_{4}\circ
     t_{2})]=\\
     [\ocone(u)[1]]+[\ocone(v)]+[\ocone(g)[1]]+
     [\ocone(f)]+[\ocone(v)[1]]+[\ocone(u)]=\\
     [\ocone(g)[1]]+[\ocone(f)].
   \end{multline*}
   Thus, the right hand side vanishes if and only if the left hand
   side vanishes, proving the claim. This observation allows to reduce
   the proof of the lemma to the following situation: consider a
   diagram of complexes of hermitian vector bundles

   \begin{displaymath}
     \xymatrix{
       \overline{E}\ar[d]^{\Id}\ar[r]^{f}
       &\overline{F}\ar[d]^{\Id}\ar[r]^{\iota}
       &\overline{\cone}(f)\ar[r]^{\pi}\ar@{-->}[d]^{\phi}_{\sim} &
       \overline{E}[1]\ar[d]^{\Id}\ar[r]	&\dots\\
       \overline{E}\ar[r]^{f}	&\overline{F}\ar[r]^{\iota}
       &\overline{\cone}(f)\ar[r]^{\pi} &\overline{E}[1]\ar[r]
       &\dots,
     }
   \end{displaymath}
   which commutes in $\Db(X)$. We need to show that $\phi$ is a tight
   isomorphism. The commutativity of the diagram translates into the
   existence of bounded complexes of hermitian vector bundles
   $\overline{P}$ and $\overline{Q}$ and a diagram
   \begin{displaymath}
     \xymatrix{
       &						&
       &\overline{\cone}(f)\ar[rd]^{\pi}\ar@{-->}[dd]^{\phi}_{\sim}
       &\\
       \overline{F}\ar@/^/[rrru]^{\iota}\ar@/_/[rrrd]_{\iota}
       &\overline{P}\ar[l]_{\hspace{0.2cm} j}^{\sim}\ar[r]^{g}
       &\overline{Q} 	
       \ar[ru]_{u}^{\sim}\ar[rd]^{v}_{\sim}	&	&\overline{E}[1]\\
       &						&
       &\overline{\cone}(f)\ar[ru]_{\pi}	&
     }
   \end{displaymath}
   fulfilling the following properties: (a) $j$, $u$, $v$ are
   quasi-isomorphisms; (b) the squares formed by $\iota, j, g, u$ and
   $\iota, j, g, v$ are commutative up to homotopy; (c) the morphisms
   $u$, $v$ induce $\phi$ in the derived category. We deduce a
   commutative up to homotopy square
   \begin{displaymath}
     \xymatrix{
       \ocone(g)\ar[d]_{\tilde{v}}^{\sim}\ar[r]^{\tilde{u}}_{\sim}
       &\ocone(\iota)\ar[d]^{\tilde{\pi}}_{\sim}\\
       \ocone(\iota)\ar[r]^{\tilde{\pi}}_{\sim}		&\overline{E}[1].
     }
   \end{displaymath}
   The arrows $\tilde{u}$, $\tilde{v}$ are induced by $j,u$ and $j, v$
   respectively. Observe they are quasi-isomorphisms. Also the natural
   projection $\tilde{\pi}$ is a quasi-isomorphism. By acyclic
   calculus, we have
   \begin{displaymath}
     [\ocone(\tilde{\pi})]+[\ocone(\tilde{u})]=[\ocone(\tilde{\pi})]+[\ocone(\tilde{v})].
   \end{displaymath}
   Therefore we find
   \begin{equation}\label{eq:59}
     [\ocone(\tilde{u})]=[\ocone(\tilde{v})].
   \end{equation}
   Finally, notice there is an exact sequence
   \begin{displaymath}
     0\longrightarrow\ocone(u)\longrightarrow
     \ocone(\tilde{u})\longrightarrow \ocone(j[1]) \longrightarrow 0,
   \end{displaymath}
   whose rows are orthogonally split. Therefore,
   \begin{equation}\label{eq:60}
     [\ocone(\tilde{u})]=[\ocone(u)]+[\ocone(j[1])].
   \end{equation}
   Similarly we prove
   \begin{equation}\label{eq:61}
     [\ocone(\tilde{v})]=[\ocone(v)]+[\ocone(j[1])].
   \end{equation}
   From equations (\ref{eq:59})--(\ref{eq:61}) we infer
   \begin{displaymath}
     [\ocone(u)[1]]+[\ocone(v)]=0,
   \end{displaymath}
   as was to be shown.
 \end{proof}

 \begin{theorem} \label{thm:6} The object $\ov{C}(f)$ of equation
   \eqref{eq:her_cone} is well defined up to tight isomorphism.
 \end{theorem}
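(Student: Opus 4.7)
The plan is to show that any two choices of data leading to $\ov C(f)$ define tightly related hermitian structures on the common underlying object $\cone(f)\in\Db(X)$. The construction depends on (i) representatives $\ov E$, $\ov E'$ of the two hermitian structures, (ii) a bounded complex of vector bundles $E''$ together with a diagram $E\stackrel{\sim}{\leftarrow}E''\to E'$ representing $f$ in $\Db(X)$, and (iii) a hermitian metric on $E''$. Since fitting tight is an equivalence relation (Theorem \ref{thm:5}), I will vary one piece of data at a time and verify tight equivalence; the compatibility of the resulting isomorphism in $\Db(X)$ with the natural maps to $\cone(f)$ is then enforced by Theorem \ref{thm:6bis}.

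The central case is a change of the intermediate complex $\ov E''$. Let $\ov A_1$ and $\ov A_2$ denote two such choices, each with its diagram representing $f$. By Lemma \ref{lemma:morphisms_Db}\ref{item:27}, there is a bounded complex $A_3$ of vector bundles and quasi-isomorphisms $\alpha_i\colon A_3\to A_i$ such that the relevant squares commute up to homotopy. Fix an arbitrary metric on $A_3$. Applying Lemma \ref{lemm:13} to the squares
\[
\xymatrix{\ov A_3\ar[d]_{\alpha_i}\ar[r]&\ov E\ar[d]^{\Id}\\ \ov A_i\ar[r]&\ov E}
\qquad\text{and}\qquad
\xymatrix{\ov A_3\ar[d]_{\alpha_i}\ar[r]&\ov E'\ar[d]^{\Id}\\ \ov A_i\ar[r]&\ov E'}
\]
yields morphisms $\psi_i\colon\ocone(\ov A_3,\ov E)\to\ocone(\ov A_i,\ov E)$ and $\psi'_i\colon\ocone(\ov A_3,\ov E')\to\ocone(\ov A_i,\ov E')$, together with natural isometries identifying $\ocone(\psi_i)$ and $\ocone(\psi'_i)$ with $\ocone(\ocone(-\alpha_i),\ocone(\Id_{\ov E}))$ and $\ocone(\ocone(-\alpha_i),\ocone(\Id_{\ov E'}))$ respectively.

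Since $\ocone(\alpha_i)$ is acyclic (as $\alpha_i$ is a quasi-isomorphism) and $\ocone(\Id)$ is meager (Example \ref{exm:2}), the acyclic calculus (Theorem \ref{thm:7}\ref{item:35}) gives
\[
[\ocone(\psi_i)]=-[\ocone(\alpha_i)], \qquad [\ocone(\psi'_i)]=-[\ocone(\alpha_i)].
\]
Write $\ov C_i$ and $\ov C_3$ for the hermitian cones built with $\ov A_i$ and $\ov A_3$ respectively. The induced morphism
\[
\rho_i:=\psi_i[1]\oplus\psi'_i\colon \ov C_3\longrightarrow\ov C_i
\]
satisfies
\[
[\ocone(\rho_i)]=-[\ocone(\psi_i)]+[\ocone(\psi'_i)]=[\ocone(\alpha_i)]-[\ocone(\alpha_i)]=0,
\]
where the first equality invokes Theorem \ref{thm:7}\ref{item:39} on the acyclic cone $\ocone(\psi_i)$. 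Hence $\rho_i$ is tight, and the span of tight morphisms $\ov C_1\leftarrow\ov C_3\to\ov C_2$ combined with Lemma \ref{lemm:16} shows that $\ov C_1$ and $\ov C_2$ induce the same hermitian structure on $\cone(f)$.

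The remaining variations reduce to this core case. A change of metric on $E''$ is the instance $\alpha=\Id_{E''}$, for which $\ocone(\alpha)$ is meager by Proposition \ref{prop:7}. A change of representative $\ov E$ (or $\ov E'$) of a hermitian structure produces, via Definition \ref{def:13}, an auxiliary complex equipped with tight quasi-isomorphisms to both representatives, and pulling $\ov E''$ back along these reduces the situation to the core case. The main obstacle is precisely the sign cancellation in the computation of $[\ocone(\rho_i)]$: the class $[\ocone(\alpha_i)]\in\KA(X)$ is acyclic but in general not meager, and survives as zero only because the shift in the first summand of \eqref{eq:her_cone} reverses its sign via Theorem \ref{thm:7}\ref{item:39}. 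This cancellation is exactly the phenomenon signaled in Remark \ref{rem:8} and explains the asymmetric choice of shift in the definition of $\ov C(f)$.
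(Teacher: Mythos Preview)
Your proof is correct and follows essentially the same approach as the paper: both reduce to the situation where one intermediate complex dominates the other and then use Lemma~\ref{lemm:13} plus acyclic calculus to see that the class $[\ocone(\alpha)]$ appears with opposite signs in the two summands of \eqref{eq:her_cone} and cancels. The only organizational difference is that the paper handles the change of representative $\ov E$ by a direct computation parallel to the core case, whereas you reduce it to the core case; your reduction is valid but left somewhat implicit (one needs that the induced map $\ocone(\ov E''',\ov F)\to\ocone(\ov E''',\ov E)$ is tight, which follows from $[\ocone(\ov F,\ov E)]=0$ via Lemma~\ref{lemm:13} and Theorem~\ref{thm:7}\ref{item:35}).
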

 \begin{proof}
   We first show the independence on the choice of $\ov E''$, up to
   tight isomorphism. To this end, it is enough to assume that there
   is a diagram
   \begin{displaymath}
     \xymatrix{
       && \ov E''' \ar[dl]_{\sim} \ar[rdd]&\\
       & \ov E''\ar[dl]_{\sim}\ar[rrd]&&\\
       \ov E &&&\ov E'
     }
   \end{displaymath}
   such that the triangle commutes up to homotopy. Fix such a homotopy. Then
   \begin{align*}
     [\ocone(\ocone(\ov E''',\ov E'),\ocone(\ov E'',\ov E'))]&=
     -[\ocone(E''', E'')],\\
     [\ocone(\ocone(\ov E''',\ov E),\ocone(\ov E'',\ov E))]&=
     -[\ocone(E''', E'')].
   \end{align*}
   By Lemma \ref{lemm:13}, the left hand sides of these relations
   agree and hence this implies that the hermitian structure does not
   depend on the choice of $\ov E''$.

   We now prove the independence on the choice of the representative
   $\ov E$. Let $\ov F\to \ov E$ be a tight morphism. Then we can
   construct a diagram
   \begin{displaymath}
     \xymatrix{
       & \ov E''' \ar[ddl]_{\sim} \ar[rd]^{\sim}&&\\
       && \ov E''\ar[dl]_{\sim}\ar[rd]&\\
       \ov F \ar[r]^{\sim} &\ov E &&\ov E',
     }
   \end{displaymath}
   where the square commutes up to homotopy. Choose one
   homotopy. Taking into account Lemma \ref{lemm:13}, we find
   \begin{align*}
     [\ocone(\ocone(\ov E''',\ov E'),\ocone(\ov E'',\ov E'))]&=
     -[\ocone(E''', E'')],\\
     [\ocone(\ocone(\ov E''',\ov F),\ocone(\ov E'',\ov E))]&=
     -[\ocone(E''', E'')]+[\ocone(\ov F,\ov
     E)]\\
     &=-[\ocone(E''', E'')].
   \end{align*}
   Hence the definitions of $\ov{C}(f)$ using $\ov E$ or $\ov F$ agree
   up to tight isomorphism.  The remaining possible choices of
   representatives are treated analogously.
 \end{proof}

\begin{remark}
  The construction of $\ocone(f)$ involves the choice of $\cone(f)$,
  which is unique up to isomorphism. Since the construction of
  $\ov{C}(f)$ \eqref{eq:her_cone} does not depend on the choice of
  $\cone(f)$, by Theorem \ref{thm:6bis}, we see that different
  choices of $\cone(f)$ give rise to tightly isomorphic hermitian
  cones. Therefore $\ocone(f)$ is well defined up to tight isomorphism
  and we will usually call it \emph{the}
  hermitian cone of $f$. When the morphism is clear, we will also write
  $\ocone(\ov{\mathcal{F}},\ov{\mathcal{G}})$ to refer to it.
\end{remark}

The hermitian cone satisfies the same relations than the usual cone.

\begin{proposition} \label{prop:10} Let $f\colon \ov {\mathcal{F}}\dra
  \ov {\mathcal{G}}$ be a morphism in $\oDb(X)$. Then, the natural
  morphisms
  \begin{gather*}
    \ocone(\ov{\mathcal{G}},\ocone(f))\dra
    \ov{\mathcal{F}}[1],\\
    \ov{\mathcal{G}}\dra \ocone(\ocone(f)[-1],\ov{\mathcal{F}})
  \end{gather*}
  are tight isomorphisms.
\end{proposition}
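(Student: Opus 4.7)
The plan is to verify tightness of each morphism by exhibiting representatives in $\oV(X)$ and applying the criterion of Remark \ref{rem:5} via the acyclic calculus of Theorem \ref{thm:7}. The two statements are symmetric under rotation of the distinguished triangle associated to $f$, so I treat the first in detail; the second follows by the analogous argument starting from a representative of $\ocone(f)[-1]$.

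Fix representatives $\ov E \dra \mathcal{F}$, $\ov E' \dra \mathcal{G}$ and a roof $\ov E \xleftarrow{g,\sim} \ov E'' \xrightarrow{h} \ov E'$ representing $f$. By Definition \ref{def:her_cone}, a representative of $\ocone(f)$ is $\ov C(f) = \ocone(g)[1] \oplus \ocone(h)$, and the natural morphism $\ov{\mathcal{G}} \dra \ocone(f)$ lifts to the inclusion of complexes $\iota \colon \ov E' \hookrightarrow \ov C(f)$ into the $\ov E'$-summand of $\ocone(h)$. Applying Definition \ref{def:her_cone} once more with the degenerate roof $\ov E' \xleftarrow{\Id} \ov E' \xrightarrow{\iota} \ov C(f)$ gives the representative
\[
\ov D := \ocone(\Id_{\ov E'})[1] \oplus \ocone(\iota)
\]
for $\ocone(\ov{\mathcal{G}}, \ocone(f))$. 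The natural morphism $\ocone(\ov{\mathcal{G}}, \ocone(f)) \dra \ov{\mathcal{F}}[1]$ is represented by the roof $\ov D \xleftarrow{\Id} \ov D \xrightarrow{\beta} \ov E[1]$, where $\beta = g[1] \circ \pi$ and $\pi \colon \ov D \to \ov E''[1]$ is the composition of the successive natural projections $\ov D \twoheadrightarrow \ocone(\iota) \twoheadrightarrow \ov C(f) \twoheadrightarrow \ocone(h) \twoheadrightarrow \ov E''[1]$. By Remark \ref{rem:5}, tightness amounts to showing $[\ocone(\beta)] = 0$ in $\KA(X)$.

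By Theorem \ref{thm:7}~\ref{item:41} applied to the composition $\beta = g[1] \circ \pi$ (both factors being quasi-isomorphisms), one has $[\ocone(\beta)] = [\ocone(\pi)] + [\ocone(g[1])]$, with $[\ocone(g[1])] = -[\ocone(g)]$. For $[\ocone(\pi)]$, a direct inspection of the differentials shows that the short exact sequence $0 \to \ker\pi \to \ov D \to \ov E''[1] \to 0$ is orthogonally split, and that $\ker\pi$ decomposes as an orthogonal direct sum of subcomplexes $\ocone(\Id_{\ov E'})[1] \oplus \ocone(\Id_{\ov E'}) \oplus \ocone(g)[1]$ (the two identity-cones being meager), whence $[\ker\pi] = [\ocone(g)[1]] = -[\ocone(g)]$. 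A similar analysis of the differential on $\ocone(\pi) = \ov D[1] \oplus \ov E''[1]$ produces a further orthogonally split short exact sequence $0 \to \ker\pi[1] \to \ocone(\pi) \to \ocone(\Id_{\ov E''})[1] \to 0$, and Corollary \ref{cor:3}~\ref{item:42} then yields $[\ocone(\pi)] = [\ker\pi[1]] = [\ocone(g)]$. Therefore $[\ocone(\beta)] = [\ocone(g)] - [\ocone(g)] = 0$, as required.

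The main obstacle is the structural bookkeeping: one must verify that the iterated projection $\pi$ is a genuine morphism of complexes (signs inherited from the iterated cone differentials), that the two orthogonal decompositions used above are compatible with the differentials (not merely gradings), and that the splitting $0 \to \ker\pi[1] \to \ocone(\pi) \to \ocone(\Id_{\ov E''})[1] \to 0$ arises from a differential-preserving projection. Once these identifications are carried out, the acyclic calculus executes the cancellation automatically. For the second statement, one uses a representative of $\ocone(f)[-1]$ (for instance $\ocone(g) \oplus \ocone(h)[-1]$ via the sign variant of Remark \ref{rem:8}) together with a lift of the natural morphism $\ocone(f)[-1] \dra \ov{\mathcal{F}}$, and repeats the analogous calculation.
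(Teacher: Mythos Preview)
Your argument is correct, but it takes a considerably longer path than the paper's. The paper first observes that, since hermitian cones are well-defined up to tight isomorphism (Theorem~\ref{thm:6}), one may choose representatives so that $f$ is an honest morphism of complexes $\ov E\to\ov F$; in your notation this amounts to replacing $\ov E$ by $\ov E''$ and absorbing the acyclic correction $\ocone(g)[1]$ into the hermitian structure on $\mathcal{F}$, so that $\ov C(f)$ reduces to the single summand $\ocone(h)$. With that simplification the relevant cone $\ocone\bigl(\ocone(\ov F,\ocone(\ov E,\ov F)),\ov E[1]\bigr)$ admits an explicit two-step filtration with graded pieces $\ocone(\Id_{\ov E})$ and $\ocone(\Id_{\ov F})$ (up to shifts and signs), and is therefore immediately seen to lie in $\mathscr{M}_F$; this is what the paper records as the isometry with $\ocone(\ocone(\Id_{\mathcal{F}}),\ocone(\Id_{\mathcal{G}}))$.

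Your route, keeping the full roof $(g,h)$ and the correction term $\ocone(g)[1]$, works because the extra term is acyclic and the acyclic calculus lets you cancel it against $[\ocone(g[1])]$ in the final step. The verifications you flag as obstacles (that $\pi$ is a chain map, that $\ker\pi$ splits as claimed, and the short exact sequence for $\ocone(\pi)$) all go through: the point is that $\iota$ lands in the $\ov E'$-summand of $\ocone(h)\subset\ov C(f)$, so its image has zero projection to $\ov E''[1]$ and the composite $\ocone(\iota)\to\ov E''[1]$ is indeed a chain map; and for any degreewise orthogonally split surjection $p\colon\ov A\twoheadrightarrow\ov B$ there is always an orthogonally split exact sequence $0\to\ker p[1]\to\ocone(p)\to\ocone(\Id_{\ov B})\to 0$. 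What your approach buys is a proof that stays entirely within the formalism of Definition~\ref{def:her_cone} without ever changing representatives; what the paper's approach buys is a two-line argument that avoids all the bookkeeping.
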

\begin{proof}
  After choosing representatives, there are isometries
  \begin{align*}
    \ocone(\ocone(\ov{\mathcal{G}},\ocone(f)),
    \ov{\mathcal{F}}[1])\cong &
    \ocone(\ocone(\Id_{\mathcal{F}}),\ocone(\Id_{\mathcal{G}}))\cong\\
    &\ocone(\ov{\mathcal{G}}, \ocone(\ocone(f)[-1],\ov{\mathcal{F}})).
  \end{align*}
  Since the middle term is meager, the same is true for the other two.
\end{proof}

We next extend some basic constructions in $\Db(X)$ to $\oDb(X)$.

\noindent\textbf{Derived tensor product.}  Let
$\overline{\mathcal{F}}_{i}=(
\overline{E}_{i}\dashrightarrow\mathcal{F}_{i})$, $i=1,2$, be objects
of $\oDb(X)$. The derived tensor product $\mathcal{F}_{1}\Lotimes
\mathcal{F}_{2}$ is endowed with a natural hermitian structure
\begin{equation}
  \label{eq:29}
  \overline{E}_{1}\otimes\overline{E}_{2}\dashrightarrow
  \mathcal{F}_{1}\Lotimes \mathcal{F}_{2},
\end{equation}
that is well defined by Theorem \ref{thm:4}~\ref{item:24}. We write
$\overline{\mathcal{F}}_{1}\Lotimes \overline{\mathcal{F}}_{2}$ for
the resulting object in $\oDb(X)$.

\noindent\textbf{Derived internal $\Hom$ and dual objects.}  Let
$\overline{\mathcal{F}}_{i}=(
\overline{E}_{i}\dashrightarrow\mathcal{F}_{i})$, $i=1,2$, be objects
of $\oDb(X)$. The derived internal $\Hom$, $
\RuHom(\mathcal{F}_{1},\mathcal{F}_{2})$ is endowed with a natural
hermitian structure
\begin{equation}
  \label{eq:22}
  \uHom(\overline{E}_{1},\overline{E}_{2})\dashrightarrow
  \RuHom(\mathcal{F}_{1},\mathcal{F}_{2}),
\end{equation}
that is well defined by Theorem \ref{thm:4}~\ref{item:24}. We write
$\RuHom(\ov{\mathcal{F}}_{1},\ov{\mathcal{F}}_{2})$ for the resulting
object in $\oDb(X)$.

In particular, denote by $\ov {\mathcal{O}}_{X}$ the structural sheaf
with the metric $\|1\|=1$. Then, for every object
$\ov{\mathcal{F}} \in \oDb(X)$, the \emph{dual object} is defined to
be
\begin{equation}
  \label{eq:30}
  \ov{\mathcal{F}}^{\vee}=\RuHom(\ov
  {\mathcal{F}},\ov{\mathcal{O}}_{X}).
\end{equation}

 \noindent\textbf{Left derived inverse image.} Let
 $g\colon X^{\prime}\rightarrow X$ be a morphism of smooth algebraic 
 varieties over $\CC$ and $\ov{\mathcal{F}}=(\ov E\dra \mathcal{F})\in
 \Ob \oDb(X)$. Then the left derived inverse image $\Ld
 g^{\ast}(\mathcal{F})$ is equipped with the hermitian structure
 $g^{\ast}(\overline{E})\dashrightarrow\Ld g^{\ast}(\mathcal{F})$,
 that is well defined up to tight isomorphism by Theorem
 \ref{thm:4}~\ref{item:25}. As it is customary, we will pretend that
 $\Ld g^{\ast}$ is a functor. The notation for the corresponding
 object in $\oDb(X^{\prime})$ is $\Ld
 g^{\ast}(\overline{\mathcal{F}})$. If $f\colon
 \overline{\mathcal{F}}_{1}\dashrightarrow \overline{\mathcal{F}}_{2}$
 is a morphism in $\oDb(X)$, we denote by $\Ld g^{\ast}(f)\colon \Ld
 g^{\ast}(\overline{\mathcal{F}}_{1})\dashrightarrow\Ld g^{\ast}
 (\overline{\mathcal{F}}_{2})$ its left derived inverse image by $g$.

 The functor $\Ld g^{\ast}$ preserves the structure of principal
 fibered category with flat connection and the formation of hermitian
 cones. Namely we have the following result that is easily proved.

 \begin{theorem} \label{thm:9} Let $g\colon X^{\prime}\rightarrow X$
   be a morphism of smooth algebraic varieties over $\CC$ and let
   $f\colon \ov {\mathcal{F}}_{1}\dra \ov {\mathcal{F}}_{2}$ be a
   morphism in $\oDb(X)$.
   \begin{enumerate}
   \item The functor $\Ld g^{\ast}$ preserves the forgetful functor:
     \begin{displaymath}
       \mathfrak{F}\circ \Ld g^{\ast}=\Ld g^{\ast}\circ \mathfrak{F}
     \end{displaymath}
   \item The restriction $\Ld g^{\ast}\colon\KA(X)\to \KA(X')$ is a
     group homomorphism.
   \item The functor $\Ld g^{\ast}$ is equivariant with respect to the
     actions of $\KA(X)$ and $\KA(X')$.
   \item The functor $\Ld g^{\ast}$ preserves parallel transport: if
     $f$ is an isomorphism, then
     \begin{displaymath}
       \Ld g^{\ast}\circ \mathfrak{t}_{f}=\mathfrak{t}_{\Ld
         g^{\ast}(f)}\circ \Ld g^{\ast}.
     \end{displaymath}
   \item The functor $\Ld g^{\ast}$ preserves hermitian cones:
     \begin{displaymath}
       \Ld g^{\ast}(\ocone(f))=\ocone(\Ld g^{\ast}(f)).
     \end{displaymath}
   \end{enumerate}
 \end{theorem}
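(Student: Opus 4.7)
The plan is to reduce everything to two properties of the termwise pullback $g^{\ast}$ acting on $\oV(X)$. First, because vector bundles are flat, $g^{\ast}$ is exact on $\Vb$, so for any $\ov E\in\Ob\oV(X)$ the derived pullback $\Ld g^{\ast}\ov E$ is just $g^{\ast}\ov E$ with pulled-back metric, and acyclicity of bounded complexes of vector bundles is preserved. Second, $g^{\ast}$ commutes with shifts, orthogonal direct sums and the hermitian cone construction (these are built termwise from direct sums and the morphism, all respected by $g^{\ast}$), and by Theorem~\ref{thm:4}~\ref{item:25} it sends meager complexes to meager complexes. Combining these facts, $g^{\ast}$ sends tight morphisms to tight morphisms, tightly related objects to tightly related objects, and descends to a homomorphism of semigroups $\oV(X)/\mathscr{M}\to\oV(X')/\mathscr{M}$.

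Items (i)--(iv) will then follow by unwinding definitions. For (i), if $\ov E\dra\mathcal{F}$ represents a hermitian structure, then by construction $g^{\ast}\ov E\dra\Ld g^{\ast}\mathcal{F}$ represents $\Ld g^{\ast}\ov{\mathcal{F}}$, whose underlying object is $\Ld g^{\ast}\mathcal{F}=\Ld g^{\ast}\mathfrak{F}(\ov{\mathcal{F}})$. For (ii), restricting the descended semigroup map to invertibles---which by Theorem~\ref{thm:7}~\ref{item:39} are exactly the classes of acyclic complexes, and are preserved by $g^{\ast}$---yields the desired group homomorphism $\KA(X)\to\KA(X')$. For (iii), the $\KA$-action on a fiber is orthogonal direct sum (Theorem~\ref{thm:13}~\ref{item:44}), and $\Ld g^{\ast}(\ov{\mathcal{F}}+[\ov M])=\Ld g^{\ast}\ov{\mathcal{F}}+[g^{\ast}\ov M]$ by compatibility with direct sums. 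For (iv), parallel transport sends a representative $\ov E\overset{\epsilon}{\dra}\mathcal{F}$ to $\ov E\overset{f\circ\epsilon}{\dra}\mathcal{G}$, and plain functoriality of $\Ld g^{\ast}$ gives $\Ld g^{\ast}(f\circ\epsilon)=\Ld g^{\ast}(f)\circ\Ld g^{\ast}(\epsilon)$, which is exactly $\mathfrak{t}_{\Ld g^{\ast}(f)}(\Ld g^{\ast}\ov{\mathcal{F}})$.

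The main content is (v), and the plan is to trace through Definition~\ref{def:her_cone}. After choosing representatives $\ov E,\ov E'$ of the source and target hermitian structures of $f$, together with an auxiliary complex $\ov E''$ and a diagram of hermitian complexes $\ov E\overset{\sim}{\leftarrow}\ov E''\to\ov E'$ lifting the underlying arrow $E\dra E'$ in $\Db(X)$, we obtain, by applying $g^{\ast}$, an analogous auxiliary datum $g^{\ast}\ov E\overset{\sim}{\leftarrow}g^{\ast}\ov E''\to g^{\ast}\ov E'$ adapted to $\Ld g^{\ast}(f)$. Since $g^{\ast}$ commutes with all the operations appearing in formula~\eqref{eq:her_cone}, we obtain an on-the-nose equality of underlying hermitian complexes $g^{\ast}\ov C(f)=\ov C(\Ld g^{\ast}(f))$; Theorems~\ref{thm:6bis} and~\ref{thm:6} then guarantee that the hermitian structure that this complex defines on $\Ld g^{\ast}\cone(f)=\cone(\Ld g^{\ast}(f))$ agrees, up to tight isomorphism, with $\ocone(\Ld g^{\ast}(f))$.

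No serious obstacle is anticipated: the whole argument is a routine verification of naturality in the base, and the single care point is to respect the ``up to tight isomorphism'' clauses in Definitions~\ref{def:her_cone} and~\ref{def:13}, which in every instance reduces to the fact that $g^{\ast}$ preserves meagerness (Theorem~\ref{thm:4}~\ref{item:25}).
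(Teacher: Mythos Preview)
Your proposal is correct and matches the paper's approach: the paper in fact gives no proof at all (it states just before the theorem that the result ``is easily proved'' and places a $\square$), so you have simply supplied the routine verification the authors omitted. Every step you outline reduces, as you say, to the preservation of meagerness under pullback (Theorem~\ref{thm:4}~\ref{item:25}) together with the termwise nature of all the constructions involved.
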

 \hfill $\square$

 \noindent\textbf{Classes of isomorphisms and distinguished
   triangles.}
 Let $f\colon \ov {\mathcal{F}}\overset{\sim}{\dashrightarrow}
 \ov{\mathcal{G}}$ be an isomorphism in $\oDb(X)$. To it, we attach a
 class $[f]\in \KA(X)$ that measures the default of being a tight
 isomorphism. This class is defined using the hermitian cone.
 \begin{equation}
   \label{eq:57}
   [f]=[\ocone(f)].
 \end{equation}
 Observe the abuse of notation: we wrote $[\ocone(f)]$ for the class
 in $\KA(X)$ of the hermitian structure of a hermitian cone of
 $f$. This is well defined, since the hermitian cone is unique up to
 tight isomorphism. Alternatively, we can construct $[f]$ using
 parallel transport as follows.  There is a unique element $\ov
 A\in\KA(X)$ such that
 \begin{displaymath}
   \ov {\mathcal{G}}=\mathfrak{t}_{f}\ov {\mathcal{F}}+\ov A.
 \end{displaymath}
 We denote this element by $\ov {\mathcal{G}}-\mathfrak{t}_{f}\ov
 {\mathcal{F}}$. Then
 \begin{displaymath}
   [f]=\ov {\mathcal{G}}-\mathfrak{t}_{f}\ov
   {\mathcal{F}}.
 \end{displaymath}
 By the very definition of parallel transport, both definitions
 clearly agree.

 \begin{definition}\label{def:6}
   A \textit{distinguished triangle in} $\oDb(X)$, consists in a
   diagram
   \begin{equation}\label{eq:64}
     \overline{\tau}=(u,v,w):
     \overline{\mathcal{F}}\overset{u}{\dashrightarrow}
     \overline{\mathcal{G}}
     \overset{v}{\dashrightarrow}\overline{\mathcal{H}}
     \overset{w}{\dashrightarrow}\overline{\mathcal{F}}[1]
     \overset{u}{\dashrightarrow}\dots
   \end{equation}
   in $\oDb(X)$, whose underlying morphisms in $\Db(X)$ form a
   distinguished triangle. We will say that it is \emph{tightly
     distinguished} if there is a commutative diagram
   \begin{equation}\label{eq:63}
     \xymatrix{
       \ov{\mathcal{F}}\ar@{-->}[r]\ar[d]^{\Id}	
       &\ov{\mathcal{G}}\ar@{-->}[r]\ar[d]^{\Id}
       &\ocone(\ov {\mathcal{F}},\ov{\mathcal{G}})
       \ar@{-->}[r]\ar@{-->}[d]^{\alpha }
       & \ov{\mathcal{F}}[1]\ar@{-->}[r]	\ar[d]^{\Id}
       &\dots\\
       \ov{\mathcal{F}}\ar@{-->}[r]	
       &\ov{\mathcal{G}}\ar@{-->}[r]
       &\ov{\mathcal{H}}\ar@{-->}[r]
       & \ov{\mathcal{F}}[1]\ar@{-->}[r]
       &\dots,
     }
   \end{equation}
   with $\alpha $ a tight isomorphism.
 \end{definition}

 To every distinguished triangle in $\oDb(X)$ we can associate a class
 in $\KA(X)$ that measures the default of being tightly distinguished.
 Let $\ov {\tau }$ be a distinguished triangle as in
 \eqref{eq:64}. Then there is a diagram as \eqref{eq:63}, but with
 $\alpha $ an isomorphism non-necessarily tight. Then we define
 \begin{equation}
   \label{eq:65}
   [\ov{\tau }]=[\alpha ].
 \end{equation}
 By Theorem \ref{thm:6bis}, the class $[\alpha]$ does not depend on
 the particular choice of morphism $\alpha$ in $\oDb(X)$ for which
 \eqref{eq:63} commutes. Hence \eqref{eq:65} only depends on
 $\ov{\tau}$.

 \begin{theorem}\label{thm:10}\ 
   \begin{enumerate}
   \item \label{item:51} Let $f$ be an isomorphism in $\oDb(X)$
     (respectively $\ov \tau $ a distinguished triangle). Then $[f]=0$
     (respectively $[\ov \tau ]=0$) if and only if $f$ is a tight isomorphism
     (respectively $\ov \tau $ is tightly distinguished).
   \item \label{item:50} Let $g\colon X'\to X$ be a morphism of smooth
     complex varieties, let $f$ be an isomorphism in $\oDb(X)$ and
     $\ov \tau $ a distinguished triangle in $\oDb(X)$. Then
     \begin{displaymath}
       \Ld g^{\ast}[f]=[\Ld g^{\ast}f],\qquad \Ld g^{\ast}[\ov \tau
       ]=[\Ld g^{\ast}\ov \tau ].
     \end{displaymath}
     In particular, tight isomorphisms and tightly distinguished
     triangles are preserved under left derived inverse images.
   \item \label{item:16}%
     Let $f\colon
     \overline{\mathcal{F}}\dashrightarrow\overline{\mathcal{G}}$ and
     $h\colon \overline{\mathcal{G}}\dashrightarrow
     \overline{\mathcal{H}}$ be two isomorphisms in $\oDb(X)$. Then:
     \begin{displaymath}
       [h\circ f]=[h]+[f].
     \end{displaymath}
     In particular, $[f^{-1}]=-[f]$.
   \item \label{item:17} For any distinguished triangle $\ov \tau $ in
     $\oDb(X)$ as in Definition \ref{def:6}, the rotated triangle
     \begin{displaymath}
       \overline{\tau}'\colon\
       \overline{\mathcal{G}}\overset{v}{\dashrightarrow}\overline{\mathcal{H}}
       \overset{w}{\dashrightarrow}\overline{\mathcal{F}}[1]
       \overset{-u[1]}{\dashrightarrow}\overline{\mathcal{G}}[1]
       \overset{v[1]}{\dashrightarrow}\dots
     \end{displaymath}
     satisfies
     \begin{math}
       [\ov \tau ']=-[\ov \tau ].
     \end{math}
     In particular, rotating preserves tightly distinguished
     triangles.
   \item \label{item:28} For any acyclic complex $\ov{\mathcal{F}}$,
     we have
     \begin{displaymath}
       [\overline{\mathcal{F}}\to 0\to
       0\to\dots]=
       [\ov{\mathcal{F}}].
     \end{displaymath}
   \item \label{item:47} If
     $f\colon\ov{\mathcal{F}}\dra\ov{\mathcal{G}}$ is an isomorphism
     in $\oDb(X)$, then
     \begin{displaymath}
       [0\to\overline{\mathcal{F}}\dra\ov{\mathcal{G}}
       \to\dots]=[f].
     \end{displaymath}
   \item \label{item:48} For a commutative diagram of distinguished
     triangles
     \begin{displaymath}
       \xymatrix{
         \ov \tau \ar@{-->}[d]
         &\ov{\mathcal{F}}\ar@{-->}[r]\ar@{-->}[d]^{f}_{\sim}	
         &\ov{\mathcal{G}}\ar@{-->}[r]\ar@{-->}[d]^{g}_{\sim}
         &\ov{\mathcal{H}}
         \ar@{-->}[r]\ar@{-->}[d]^{h} _{\sim}
         & \ov{\mathcal{F}}[1]\ar@{-->}[r]	\ar@{-->}[d]^{f[1]}_{\sim}
         &\dots\\
         \ov \tau ' &\ov{\mathcal{F}}'\ar@{-->}[r]	
         &\ov{\mathcal{G}}'\ar@{-->}[r]
         &\ov{\mathcal{H}}'\ar@{-->}[r]
         & \ov{\mathcal{F}}'[1]\ar@{-->}[r]
         &\dots,
       }
     \end{displaymath}
     the following relation holds:
     \begin{displaymath}
       [\ov \tau ']
       -[\ov \tau ]=
       [f]-[g]+[h].
     \end{displaymath}
   \item \label{item:49} For a commutative diagram of distinguished
     triangles
     \begin{equation}\label{eq:66}
       \xymatrix{
         \ov \tau \ar@{-->}[d]
         &\ov{\mathcal{F}}\ar@{-->}[r]\ar@{-->}[d]	
         &\ov{\mathcal{G}}\ar@{-->}[r]\ar@{-->}[d]
         &\ov{\mathcal{H}}
         \ar@{-->}[r]\ar@{-->}[d]
         & \ov{\mathcal{F}}[1]\ar@{-->}[r]	\ar@{-->}[d]
         &\dots\\
         \ov \tau' \ar@{-->}[d]
         &\ov{\mathcal{F}}'\ar@{-->}[r]\ar@{-->}[d]
         &\ov{\mathcal{G}}'\ar@{-->}[r]\ar@{-->}[d]
         &\ov{\mathcal{H}}'
         \ar@{-->}[r]\ar@{-->}[d]
         & \ov{\mathcal{F}}'[1]\ar@{-->}[r]	\ar@{-->}[d]
         &\dots\\
         \ov \tau '' &\ov{\mathcal{F}}''\ar@{-->}[r]\ar@{-->}[d]	
         &\ov{\mathcal{G}}''\ar@{-->}[r]  \ar@{-->}[d]
         &\ov{\mathcal{H}}''\ar@{-->}[r]  \ar@{-->}[d]
         & \ov{\mathcal{F}}''[1]\ar@{-->}[r] \ar@{-->}[d]
         &\dots\\
         &\ov{\mathcal{F}}[1]\ar@{-->}[r]\ar@{-->}[d]	
         &\ov{\mathcal{G}}[1]\ar@{-->}[r]\ar@{-->}[d]
         &\ov{\mathcal{H}}[1]
         \ar@{-->}[r]\ar@{-->}[d] &\ov{\mathcal{F}}[2]\ar@{-->}[r]
         \ar@{-->}[d]
         &\dots&\\
         &\vdots &\vdots &\vdots & \vdots &&\\
         & \ov \eta \ar@{-->}[r] & \ov \eta' \ar@{-->}[r]  & \ov \eta''
         &&&
       }
     \end{equation}
     the following relation holds:
     \begin{displaymath}
       [\ov \tau ]-[\ov \tau' ]
       +[\ov \tau'' ]=
       [\ov \eta ]-[\ov \eta' ]
       +[\ov \eta'' ].
     \end{displaymath}
   \end{enumerate}
 \end{theorem}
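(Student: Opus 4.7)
The plan is to address the nine items by reducing everything to the two defining equalities $[f]=[\ocone(f)]$ for isomorphisms and $[\ov\tau]=[\alpha]$ for triangles, and then applying the acyclic calculus of Theorem~\ref{thm:7} together with the structural results on hermitian cones.

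Items \ref{item:51} and \ref{item:50} are almost immediate from the definitions: \ref{item:51} follows from Theorem~\ref{thm:7}~\ref{item:34} applied to $\ocone(f)$ (or to the canonical $\alpha$ of \eqref{eq:63}), while \ref{item:50} follows from Theorem~\ref{thm:9}, which says $\Ld g^{\ast}$ commutes with hermitian cones, combined with Theorem~\ref{thm:4}~\ref{item:25} on preservation of meagerness. For \ref{item:16} I would apply Theorem~\ref{thm:7}~\ref{item:41} to the composition of quasi-isomorphisms obtained from lifting $f$ and $h$ to representatives, obtaining directly $[\ocone(h\circ f)]=[\ocone(h)]+[\ocone(f)]$. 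Item \ref{item:17} on rotation uses Proposition~\ref{prop:10} to identify $\ocone(\ov{\mathcal{G}},\ocone(f))$ with $\ov{\mathcal{F}}[1]$ up to tight isomorphism; tracking the shift through the definition \eqref{eq:57} and using $[\ov E[1]]=-[\ov E]$ (Theorem~\ref{thm:7}~\ref{item:39}) gives the sign. Items \ref{item:28} and \ref{item:47} are verified by writing out representatives for the degenerate triangles: for \ref{item:28} one checks that the hermitian cone in \eqref{eq:her_cone} reduces, up to meager summands, to $\ov{\mathcal{F}}[2]$, whose class in $\KA(X)$ equals $[\ov{\mathcal{F}}]$; for \ref{item:47}, the map $\alpha$ in diagram \eqref{eq:63} coincides with $f$ under a canonical identification.

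The main technical content lies in \ref{item:48}, which I would treat as follows. Represent both triangles by their canonical hermitian cones, obtaining isomorphisms $\alpha$, $\alpha'$ with $[\alpha]=[\ov\tau]$ and $[\alpha']=[\ov\tau']$. The hypothesis that $f,g,h$ form a morphism of triangles, together with the naturality of the hermitian cone construction, produces a commutative-up-to-homotopy square between the canonical cones $\ocone(\ov{\mathcal{F}},\ov{\mathcal{G}})$ and $\ocone(\ov{\mathcal{F}}',\ov{\mathcal{G}}')$, compatible with $h$, $\alpha$ and $\alpha'$. Applying Theorem~\ref{thm:7}~\ref{item:36} (the class-level version of Lemma~\ref{lemm:13}) to this square yields the linear relation $[\alpha']-[\alpha]=[f]-[g]+[h]$, which rearranges to the stated identity. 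The main obstacle here is the bookkeeping of signs and compatibility of the various choices of representatives; this is controlled by the independence statements Theorems~\ref{thm:6bis} and \ref{thm:6}.

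Finally, \ref{item:49} is a formal consequence of \ref{item:48} applied to the horizontal and vertical morphisms of triangles present in \eqref{eq:66}. Each application of \ref{item:48} produces a linear relation among the classes of the nine internal morphisms plus three triangle classes; taking the appropriate alternating sum causes all internal morphism contributions to cancel, leaving the symmetric identity $[\ov\tau]-[\ov\tau']+[\ov\tau'']=[\ov\eta]-[\ov\eta']+[\ov\eta'']$. I expect \ref{item:48} to be the critical step, since \ref{item:49} follows formally once linearity of the triangle class under morphisms of triangles has been established.
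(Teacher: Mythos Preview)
Your proposal is correct and follows essentially the same approach as the paper's own proof, which is quite terse: it declares items \ref{item:51} and \ref{item:50} clear, derives \ref{item:16} from Theorem~\ref{thm:7}~\ref{item:41}, derives \ref{item:17} from Proposition~\ref{prop:10}, says \ref{item:28}, \ref{item:47}, \ref{item:48} follow from equation~\eqref{eq:65} together with \ref{item:17}, and obtains \ref{item:49} from \ref{item:48} by comparing to a diagram of tightly distinguished triangles. Your expansion of these steps---in particular using Theorem~\ref{thm:7}~\ref{item:36} for \ref{item:48} and an alternating-sum argument for \ref{item:49}---is a faithful unpacking of what the paper leaves implicit.
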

 \begin{proof}
   The first two statements are clear.  For the third, we may assume
   that $f$ and $g$ are realized by quasi-isomorphisms
   \begin{displaymath}
     f\colon \ov F\longrightarrow \ov G,
     \quad g\colon  \ov G\longrightarrow \ov H.
   \end{displaymath}
   Then the result follows from Theorem \ref{thm:7}~\ref{item:41}.
   The fourth assertion is a consequence of Proposition
   \ref{prop:10}. Then \ref{item:28}, \ref{item:47} and \ref{item:48}
   follow from equation \eqref{eq:65} and the fourth statement. The
   last property is derived from \ref{item:48} by comparing the
   diagram to a diagram of tightly distinguished triangles.
 \end{proof}

 As an application of the class in $\KA(X)$ attached to a
 distinguished triangle, we exhibit a natural morphism
 $K_{1}(X)\to\KA(X)$. This is included for the sake of completeness,
 but won't be needed in the sequel.

\begin{proposition}\label{prop:K1_to_KA}
  There is a natural morphism of groups $K_{1}(X)\to\KA(X)$.
\end{proposition}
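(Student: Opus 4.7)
The plan is to use the presentation of $K_1(X)$ by generators and relations on automorphisms of vector bundles, as provided by the theory of $K_1$ of exact categories. Explicitly, $K_1(X)$ is generated by pairs $(E,\alpha)$, where $E\in\Ob\Vb(X)$ is concentrated in degree zero and $\alpha$ is an automorphism of $E$, modulo the composition relation $(E,\alpha\circ\beta)=(E,\alpha)+(E,\beta)$ and the additivity relation $(E,\alpha)=(E',\alpha')+(E'',\alpha'')$ attached to any short exact sequence $0\to E'\to E\to E''\to 0$ together with compatible automorphisms $(\alpha',\alpha,\alpha'')$.

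Given such a generator $(E,\alpha)$, I equip $E$ with any smooth hermitian metric $h$ and assign to it the class $[\alpha_h]\in\KA(X)$, where $\alpha_h$ denotes $\alpha$ viewed as an isomorphism $(E,h)\dra(E,h)$ in $\oDb(X)$ and $[\,\cdot\,]$ is the class defined in \eqref{eq:57}. The first point to check is the independence of $h$: if $h'$ is another metric, the identity of $E$ induces an isomorphism $\Id_{h,h'}\colon(E,h)\dra(E,h')$ in $\oDb(X)$, and the factorisation $\alpha_{h}=\Id_{h',h}\circ\alpha_{h'}\circ\Id_{h,h'}$ together with Theorem~\ref{thm:10}~\ref{item:16} gives $[\alpha_h]=[\alpha_{h'}]$.

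It then remains to verify the two relations. The composition relation follows at once from Theorem~\ref{thm:10}~\ref{item:16}. For the short exact sequence relation, I choose arbitrary hermitian metrics on $E'$, $E$, $E''$. The sequence $0\to E'\to E\to E''\to 0$ becomes a distinguished triangle $\ov\tau$ in $\oDb(X)$, and the compatible automorphisms $(\alpha',\alpha,\alpha'')$ produce a self-morphism of $\ov\tau$. Applying Theorem~\ref{thm:10}~\ref{item:48} one obtains
\[
0=[\ov\tau]-[\ov\tau]=[\alpha']-[\alpha]+[\alpha''],
\]
which is exactly the desired relation.

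The main obstacle I foresee is not computational but structural: one must invoke the correct generators-and-relations presentation of $K_1(X)$ and, in checking the additivity relation, confirm that a triple of automorphisms compatible with a short exact sequence really does lift to a genuine morphism of distinguished triangles in $\Db(X)$, which comes down to the naturality of the connecting homomorphism $E''\dra E'[1]$.
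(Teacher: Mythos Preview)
Your approach differs from the paper's. The paper routes everything through the cubical complex $\widetilde{\mathbb{Z}}C(X)$ of \cite{Burgos-Wang}: by the results there it suffices to build a map $H_{1}(\widetilde{\mathbb{Z}}C(X))\to\KA(X)$, which is done by sending a metrized exact $1$-cube (i.e.\ a short exact sequence of hermitian bundles) to its class as a distinguished triangle in $\KA(X)$, and then checking via Theorem~\ref{thm:10}~\ref{item:49} that boundaries of $2$-cubes die. Your route is more direct: present $K_{1}(X)$ by automorphisms of vector bundles and send $(E,\alpha)\mapsto[\alpha_{h}]$. The verifications you carry out with Theorem~\ref{thm:10}~\ref{item:16} and~\ref{item:48} are correct and pleasant, and the independence-of-metric argument is fine.

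The concern is the presentation itself. The group generated by pairs $(E,\alpha)$ subject to the composition and short-exact-sequence relations you write down is the Bass-style $K_{1}$ of the exact category of vector bundles on $X$. There is a natural comparison map from this group to Quillen's $K_{1}(X)$, but for a general exact category this map is not known to be an isomorphism; the complete generators-and-relations description of Quillen's $K_{1}$ (Nenashev) uses \emph{double} short exact sequences rather than single automorphisms. So, as written, you have constructed a homomorphism out of a group that merely maps to $K_{1}(X)$, not out of $K_{1}(X)$ itself. To close this gap you would need either to justify that the Bass-type and Quillen $K_{1}$ coincide for vector bundles on smooth complex varieties, or to start from Nenashev's presentation and check that your assignment respects its generators and relations. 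The paper's detour through cubical homology is precisely what lets it sidestep this issue, since \cite{Burgos-Wang} already supplies the map with Quillen's $K_{1}(X)$ as its source.
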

\begin{proof}
  We follow the definitions and notations of \cite{Burgos-Wang}. From
  \emph{loc. cit.} we know it is enough to construct a morphism of
  groups
  \begin{equation}\label{eq:K1-KA}
    H_{1}(\widetilde{\mathbb{Z}}C(X))\to\KA(X).
  \end{equation}
  By definition, the piece of degree $n$ of the homological complex
  $\widetilde{\mathbb{Z}}C(X)$ is
  \begin{displaymath}
    \widetilde{\mathbb{Z}}C_{n}(X)=\mathbb{Z}C_{n}(X)/D_{n}.
  \end{displaymath}
  Here $\mathbb{Z}C_{n}(X)$ stands for the free abelian group on
  metrized exact $n$-cubes and $D_{n}$ is the subgroup of degenerate
  elements. A metrized exact $1$-cube is a short exact sequence of
  hermitian vector bundles. Hence, for such a $1$-cube
  $\ov{\varepsilon}$, there is a well defined class in
  $\KA(X)$. Observe that this class coincides with the class of
  $\ov{\varepsilon}$ thought as a distinguished triangle in
  $\oDb(X)$. Because $\KA(X)$ is an abelian group, it follows the
  existence of a morphism of groups
  \begin{displaymath}
    \mathbb{Z}C_{1}(X)\longrightarrow\KA(X).
  \end{displaymath}
  From the definition of degenerate cube \cite[Def. 3.3]{Burgos-Wang}
  and the construction of $\KA(X)$, this morphism clearly factors
  through $\widetilde{\mathbb{Z}}C_{1}(X)$. The definition of the
  differential $d$ of the complex $\widetilde{\mathbb{Z}}C(X)$
  \cite[(3.2)]{Burgos-Wang} and Theorem \ref{thm:10} \ref{item:49}
  ensure that $d\mathbb{Z}C_{2}(X)$ is in the kernel of the
  morphism. Hence we derive the existence of a morphism
  \eqref{eq:K1-KA}.
\end{proof}

\noindent\textbf{Classes of complexes and of direct images of
  complexes.} In
\cite[Section 2]{BurgosLitcanu:SingularBC} the notion of homological
exact sequences of metrized coherent sheaves is treated. In the
present article, this situation will arise in later
considerations. Therefore we provide the link between the point of
view of \emph{loc. cit.} and the formalism adopted here. The reader
will find no difficulty  to translate it to cohomological complexes.

Consider a homological complex
\begin{displaymath}
  \overline{\varepsilon }:\quad
  0\to \overline{\mathcal{F}}_{m}\to \dots \to
  \overline{\mathcal{F}}_{l}\to 0
\end{displaymath}
of metrized coherent sheaves, namely coherent sheaves provided with
hermitian structures $\ov {\mathcal{F}}_{i}=(\mathcal{F}_{i},\ov
F_{i}\dra \mathcal{F}_{i})$. We may equivalently see
$\overline{\varepsilon}$ as a cohomological complex, by the usual
relabeling $\ov{\mathcal{F}}^{-i}=\ov{\mathcal{F}}_{i}$. This will be
freely used in the sequel, especially in cone constructions.
\begin{definition}\label{def:1}
  The complex $\ov \varepsilon $ defines an object $[\ov \varepsilon
  ]\in \Ob \oDb(X)$ that is determined inductively by the condition
  \begin{displaymath}
    [\ov \varepsilon ]=\ocone(\ov{\mathcal{F }}_{m}[m],[\sigma _{<m}\ov
    \varepsilon ]).
  \end{displaymath}
  Here $\sigma _{<m}$ is the homological b\^ete filtration and
  $\ov{\mathcal{F
    }}_{m}$ denotes a cohomological complex concentrated in degree
  zero. Hence, $\ov{\mathcal{F }}_{m}[m]$ is a cohomological complex
  concentrated in degree $-m$.
\end{definition}

If $\ov{E}$ is a hermitian vector bundle on $X$, then
\begin{math}
  [\ov{\varepsilon}\otimes \ov{E}]=[\ov{\varepsilon}]\otimes \ov{E}
\end{math}.
According to Definition \ref{def:category_oDb}, if $\varepsilon $ is
an acyclic complex, then we also have the corresponding class
$[[\ov{\varepsilon}]]$ in $\KA(X)$. We will employ the lighter
notation $[\ov{\varepsilon}]$ for this class.

Given a morphism $\varphi\colon \ov{\varepsilon}\to\ov{\mu}$ of
bounded complexes of metrized coherent sheaves, the pieces of the
complex $\cone(\varepsilon,\mu)$ are naturally endowed with hermitian
metrics. We thus get a complex of metrized coherent sheaves
$\ov{\cone(\varepsilon,\mu)}$. Hence Definition \ref{def:1} provides
an object $[\ov{\cone(\varepsilon,\mu)}]$ in $\oDb(X)$. On the other
hand, Definition \ref{def:her_cone} attaches to $\varphi$ the
hermitian cone $\ocone([\ov{\varepsilon}],[\ov{\mu}])$, which is well
defined up to tight isomorphism. Both constructions actually agree.
\begin{lemma}\label{lemm:3}
  Let $\ov{\varepsilon}\to\ov{\mu}$ be a morphism of bounded complexes
  of metrized coherent sheaves on $X$. Then there is a tight
  isomorphism
  \begin{displaymath}
    \ocone([\ov{\varepsilon}],[\ov{\mu}])\cong [\ov{\cone(\varepsilon,\mu)}],
  \end{displaymath}
\end{lemma}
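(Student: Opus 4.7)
The plan is to proceed by induction on the total length of the complexes $\ov\varepsilon$ and $\ov\mu$. Both sides of the claimed tight isomorphism are constructed by iterated cone operations — the right-hand side through Definition \ref{def:1} applied piecewise along the b\^ete filtration, and the left-hand side through Definition \ref{def:her_cone} — so the key task is to commute these iterations. The crucial tool will be a hermitian version of the octahedral relation, which I would derive from Lemma \ref{lemm:13} applied to the strictly commutative square
\begin{displaymath}
  \xymatrix{
    \ov A \ar[r]^{\mathrm{id}} \ar[d]_{f} & \ov A \ar[d]^{g\circ f}\\
    \ov B \ar[r]_{g} & \ov C
  }
\end{displaymath}
together with the observation that $\ocone(\mathrm{id}_{\ov A})$ is meager (Example \ref{exm:2}). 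This yields, up to meager complexes, $\ocone(\ocone(\ov A,\ov B),\ocone(\ov A,\ov C)) \cong \ocone(\ov B,\ov C)$, which encodes the octahedral triangle $\ocone(\ov A,\ov B)\to\ocone(\ov A,\ov C)\to\ocone(\ov B,\ov C)\to\ocone(\ov A,\ov B)[1]$ in $\oDb(X)$.

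For the base case I would take $\ov\varepsilon$ and $\ov\mu$ both concentrated in a single homological degree $p$. Then $[\ov\varepsilon]=\ov{\mathcal{F}}[p]$ and $[\ov\mu]=\ov{\mathcal{G}}[p]$, while $\cone(\varepsilon,\mu)$ is the two-term complex with $\ov{\mathcal{F}}$ in degree $p+1$ and $\ov{\mathcal{G}}$ in degree $p$. Definition \ref{def:1} then produces $[\ov{\cone(\varepsilon,\mu)}]=\ocone(\ov{\mathcal{F}}[p+1],\ov{\mathcal{G}}[p])$, which matches $\ocone([\ov\varepsilon],[\ov\mu])=\ocone(\ov{\mathcal{F}}[p],\ov{\mathcal{G}}[p])$ up to tight isomorphism by Definition \ref{def:her_cone}.

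For the inductive step, after possibly swapping roles I may assume $m=m_\varepsilon\geq m_\mu$. The b\^ete filtration of $\varepsilon$ provides a distinguished triangle $[\sigma_{<m}\ov\varepsilon]\to [\ov\varepsilon]\to \ov{\mathcal{F}}_m[m+1]$ in $\oDb(X)$. Applying the octahedral axiom to the composition $[\sigma_{<m}\ov\varepsilon]\to [\ov\varepsilon]\to [\ov\mu]$ produces the distinguished triangle
\begin{displaymath}
  \ov{\mathcal{F}}_m[m+1]\to \ocone([\sigma_{<m}\ov\varepsilon],[\ov\mu])\to \ocone([\ov\varepsilon],[\ov\mu]),
\end{displaymath}
so $\ocone([\ov\varepsilon],[\ov\mu])\cong \ocone(\ov{\mathcal{F}}_m[m+1],\ocone([\sigma_{<m}\ov\varepsilon],[\ov\mu]))$ up to tight isomorphism. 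A direct computation with $\cone(\varepsilon,\mu)_i=\varepsilon_{i-1}\oplus\mu_i$ gives $\sigma_{<m+1}\cone(\varepsilon,\mu)=\cone(\sigma_{<m}\varepsilon,\mu)$ and $\cone(\varepsilon,\mu)_{m+1}=\mathcal{F}_m$, hence by Definition \ref{def:1}
\begin{displaymath}
  [\ov{\cone(\varepsilon,\mu)}]=\ocone(\ov{\mathcal{F}}_m[m+1],[\ov{\cone(\sigma_{<m}\varepsilon,\mu)}]).
\end{displaymath}
The inductive hypothesis applied to $\sigma_{<m}\varepsilon$ and $\mu$ identifies $\ocone([\sigma_{<m}\ov\varepsilon],[\ov\mu])$ with $[\ov{\cone(\sigma_{<m}\varepsilon,\mu)}]$, and combining these two expressions yields the desired tight isomorphism.

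The hard part will be establishing the hermitian octahedral axiom in $\oDb(X)$ up to tight isomorphism, carefully handling the sign ambiguities (the notation $\ocone(\ov E,\ov F)$ is defined only up to the natural isometry $\ocone(f)\cong\ocone(-f)$) and checking that the resulting tight-isomorphism class does not depend on the intermediate choices of representatives of the hermitian structures on $[\ov\varepsilon]$, $[\ov\mu]$, and $\sigma_{<m}\ov\varepsilon$. Should the identifications only produce an equality of classes in $\KA(X)$ rather than an outright tight isomorphism, I would fall back on Lemma \ref{lemm:14} together with the acyclic calculus of Theorem \ref{thm:7} to verify that the discrepancy is meager, thereby upgrading the equality of classes to a tight isomorphism.
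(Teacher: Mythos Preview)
Your proposal is correct and follows essentially the same route as the paper: the paper's proof reads in its entirety ``The case when $\varepsilon$ and $\mu$ are both concentrated in a single degree $d$ is clear. The general case follows by induction taking into account Definition~\ref{def:1}.'' You have simply fleshed out the inductive step that the paper leaves implicit, and your octahedral argument via Lemma~\ref{lemm:13} is the natural way to do so. One small caveat: the phrase ``after possibly swapping roles'' is not quite right, since exchanging $\varepsilon$ and $\mu$ replaces the cone by its shift; what you actually need is to peel off the top term of $\cone(\varepsilon,\mu)$, which is $\mathcal{F}_{m_\varepsilon}$, $\mathcal{G}_{m_\mu}$, or their sum depending on whether $m_\varepsilon+1$, $m_\mu$, or both realize the maximum---but the same argument goes through in each case.
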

\begin{proof}
  The case when $\varepsilon$ and $\mu$ are both concentrated in a
  single degree $d$ is clear. The general case follows by induction
  taking into account Definition \ref{def:1}.
\end{proof}

Assume now that $f\colon X\to Y$ is a morphism of smooth complex
varieties and, for each complex $\Rd f_{\ast}\mathcal{F}_{i}$, we have
chosen a hermitian structure $\ov{\Rd f_{\ast} \mathcal{F}_{i}}=(\ov
E_{i}\dra \Rd f_{\ast} \mathcal{F}_{i})$. Denote by $\ov {\Rd
  f_{\ast}\varepsilon }$ this choice of metrics.
\begin{definition} \label{def:5} The family of hermitian structures
  $\ov {\Rd f_{\ast}\varepsilon }$ defines an object $[\ov{\Rd
    f_{\ast}\varepsilon }]\in \Ob \oDb(Y)$ that is determined
  inductively by the condition
  \begin{displaymath}
    [\ov {\Rd f_{\ast}\varepsilon }]=\ocone (\ov {\Rd f_{\ast}
      \mathcal{F}_{m}}[m],[\ov {\Rd f_{\ast}\sigma _{< m} \varepsilon} ]).
  \end{displaymath}
\end{definition}

We remark that the notation $\ov {\Rd
  f_{\ast}\varepsilon }$ means that the hermitian structure is
chosen after taking the direct image and it is not determined by the
hermitian structure on $\ov \varepsilon $.

If $\ov{F}$ is a hermitian vector bundle on $Y$, then the
object $[\ov{\Rd f_{\ast}(\varepsilon\otimes f^{\ast} F)}]$ (whose
definition is obvious)
satisfies
\begin{displaymath}
  [\ov{\Rd f_{\ast}(\varepsilon\otimes f^{\ast} F)}]
  =[\ov{\Rd f_{\ast}\varepsilon}]\otimes\ov{F}.
\end{displaymath}
Notice also that if $\varepsilon$ is an acyclic complex on $X$,
we have the class $[\ov {\Rd f_{\ast}\varepsilon}]\in \KA(Y)$.

Let $\varepsilon\to\mu$ be a morphism of bounded complexes of coherent
sheaves on $X$ and $f\colon X\to Y$ a morphism of smooth complex
varieties. Fix choices of metrics $\ov{\Rd f_{\ast}\varepsilon}$ and
$\ov{\Rd f_{\ast}\mu}$. Then there is an obvious choice of metrics on
$\Rd f_{\ast}\cone(\varepsilon,\mu)$, that we denote $\ov{\Rd
  f_{\ast}\cone(\varepsilon,\mu)}$, and hence an object $[\ov{\Rd
  f_{\ast}\cone(\varepsilon,\mu)}]$ in $\oDb(Y)$. On the other hand,
we also have the hermitian cone $\ocone([\ov{\Rd
  f_{\ast}\varepsilon}],[\ov{\Rd f_{\ast}\mu}])$. Again both
definitions agree.

\begin{lemma}\label{lemm:3bis}
  Let $\varepsilon\to\mu$ be a morphism of bounded complexes of
  coherent sheaves on $X$ and $f\colon X\to Y$ a morphism of smooth
  complex varieties. Assume that families of metrics $\ov{\Rd
    f_{\ast}\varepsilon}$ and $\ov{\Rd f_{\ast}\mu}$ are chosen. Then there is a
  tight isomorphism
  \begin{displaymath}
    \ocone([\ov{\Rd f_{\ast}\varepsilon}],[\ov{\Rd f_{\ast}\mu}])\cong [\ov{\Rd f_{\ast}\cone(\varepsilon,\mu)}].
  \end{displaymath}
\end{lemma}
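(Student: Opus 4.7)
The plan is to mirror the proof of Lemma \ref{lemm:3}, proceeding by induction on the total length of the complexes $\varepsilon$ and $\mu$, using Definition \ref{def:5} as the inductive tool on each side. The base case, when both $\varepsilon$ and $\mu$ are concentrated in a single cohomological degree, is immediate: $\cone(\varepsilon,\mu)$ is then a two-term complex whose direct image, equipped with the metrics induced from $\ov{\Rd f_{\ast}\varepsilon}$ and $\ov{\Rd f_{\ast}\mu}$, is by Definition \ref{def:5} precisely the hermitian cone $\ocone([\ov{\Rd f_{\ast}\varepsilon}],[\ov{\Rd f_{\ast}\mu}])$.

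For the inductive step, after padding with zeros we may assume $\varepsilon$ and $\mu$ live in the same degree range, with top homological index $m$. The morphism $\varphi\colon\varepsilon\to\mu$ is automatically compatible with the homological b\^ete filtration $\sigma_{<m}$, inducing morphisms $\varphi_{m}\colon\mathcal{F}_{m}\to\mathcal{G}_{m}$ and $\varphi_{<m}\colon\sigma_{<m}\varepsilon\to\sigma_{<m}\mu$. Applying Definition \ref{def:5} to both $[\ov{\Rd f_{\ast}\varepsilon}]$ and $[\ov{\Rd f_{\ast}\mu}]$, the left-hand side of the claimed tight isomorphism is expressed as a cone of cones, and by the exchange formula \eqref{eq:53} of Lemma \ref{lemm:13} it is tightly isomorphic to
\begin{displaymath}
\ocone\bigl(\ocone(\ov{\Rd f_{\ast}\mathcal{F}_{m}}[m],\ov{\Rd f_{\ast}\mathcal{G}_{m}}[m]),\ \ocone([\ov{\Rd f_{\ast}\sigma_{<m}\varepsilon}],[\ov{\Rd f_{\ast}\sigma_{<m}\mu}])\bigr).
\end{displaymath}
The induction hypothesis applied to $\varphi_{m}$ (the base case) and to $\varphi_{<m}$ identifies each inner cone with the corresponding direct image of a cone complex. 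A final application of Definition \ref{def:5} to $\cone(\varepsilon,\mu)$, using the short exact sequence of complexes
\begin{displaymath}
0\to\sigma_{<m}\cone(\varepsilon,\mu)\to\cone(\varepsilon,\mu)\to\cone(\mathcal{F}_{m},\mathcal{G}_{m})[m]\to0
\end{displaymath}
(with appropriate shifts, and noting that $\sigma_{<m}\cone(\varepsilon,\mu)$ is canonically $\cone(\sigma_{<m}\varepsilon,\sigma_{<m}\mu)$ up to the top differential, which is absorbed into the outer cone), reassembles these pieces into $[\ov{\Rd f_{\ast}\cone(\varepsilon,\mu)}]$. The parallel transports through the successive tight isomorphisms compose to yield the desired identification.

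The main obstacle I expect is the careful bookkeeping of sign conventions, degree shifts, and b\^ete filtration identifications needed so that Lemma \ref{lemm:13} applies verbatim and so that the tight isomorphisms produced by induction assemble cleanly without mismatches between the two sides of \eqref{eq:53}. However, this is essentially routine bookkeeping once the acyclic calculus of Section \ref{sec:meager-complexes} is in place; no genuinely new idea beyond what is already used in the proof of Lemma \ref{lemm:3} should be required.
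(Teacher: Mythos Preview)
Your proposal is correct and follows the same route as the paper: the paper's proof is the two-line remark that the case of complexes concentrated in a single degree $d$ is obvious and the general case follows by induction and Definition \ref{def:5}. You have simply unpacked what that induction actually entails, including the use of the exchange isometry \eqref{eq:53} and the bookkeeping with the b\^ete truncation of $\cone(\varepsilon,\mu)$; no different idea is involved.
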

\begin{proof}
  If $\varepsilon$ and $\mu$ are concentrated in a single degree $d$,
  then the statement is obvious. The proof follows by induction and
  Definition \ref{def:5}.
\end{proof}

The objects we have defined are compatible with short exact sequences,
in the sense of the following statement.
\begin{proposition} \label{prop:4} Consider a commutative diagram of
  exact sequences of coherent sheaves on $X$
  \begin{displaymath}
    \xymatrix{
      & & &0\ar[d] & &0\ar[d] & \\
      &\mu' &0\ar[r] &\mathcal{F}_{m}'\ar[r]\ar[d] &\dots\ar[r] &\mathcal{F}_{l}'\ar[r]\ar[d] &0
      \\
      &\mu &0\ar[r] &\mathcal{F}_{m}\ar[r]\ar[d] &\dots\ar[r] &\mathcal{F}_{l}\ar[r]\ar[d] &0
      \\
      &\mu'' &0\ar[r] &\mathcal{F}_{m}''\ar[r]\ar[d] &\dots\ar[r] &\mathcal{F}_{l}''\ar[r]\ar[d] &0
      \\
      & & &0 & &0 & \\
      & & &\xi_{m} &\dots &\xi_{l}. &
    }
  \end{displaymath}
  Let $f\colon X\to Y$ be a morphism of smooth complex varieties and
  choose hermitian structures on the sheaves $\mathcal{F}_{j}'$,
  $\mathcal{F}_{j}$, $\mathcal{F}_{j}''$ and on the objects $\Rd
  f_{\ast}\mathcal{F}_{j}'$, $\Rd f_{\ast}\mathcal{F}_{j}$ and $\Rd
  f_{\ast}\mathcal{F}_{j}''$, $j=l,\dots, m$. Then the following
  equalities hold in $\KA(X)$ and $\KA(Y)$, respectively:
  \begin{align*}
    &\sum_{j}(-1)^{j}[\ov{\xi}_{j}]=[\ov{\mu}']-[\ov{\mu}]+[\ov{\mu}''],\\
    &\sum_{j}(-1)^{j}[\ov{\Rd f_{\ast}\xi}_{j}]=[\ov{\Rd
      f_{\ast}\mu}']-[\ov{\Rd f_{\ast} \mu}]+[\ov{\Rd f_{\ast}\mu}''].
  \end{align*}
\end{proposition}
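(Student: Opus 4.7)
The two identities have a parallel structure, so I first treat the first; the direct image version follows by the same induction. The expression $[\ov{\mu}']-[\ov{\mu}]+[\ov{\mu}'']$ is understood as the class in $\KA(X)$ of the distinguished triangle $\tau_\mu\colon [\ov{\mu}']\dra[\ov{\mu}]\dra[\ov{\mu}'']\dra[\ov{\mu}'][1]$ in $\oDb(X)$ built from the short exact sequence of complexes: by Lemma~\ref{lemm:3} the hermitian cone $\ocone([\ov{\mu}'],[\ov{\mu}])$ is tightly isomorphic to $[\ov{\cone(\mu',\mu)}]$, and the natural quasi-isomorphism $\cone(\mu',\mu)\dra\mu''$ coming from the short exact sequence provides the third vertex.

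I proceed by induction on the length $k=m-l+1$ of the complexes. In the base case $k=1$, all three complexes reduce to single sheaves placed in a common degree $j$, and the triangle $\tau_\mu$ is precisely the $j$-fold shift of the distinguished triangle $\ov{\xi}_j$; iterating the rotation formula of Theorem~\ref{thm:10}~\ref{item:17} gives $[\tau_\mu]=(-1)^j[\ov{\xi}_j]$, which matches the right-hand side. For the inductive step, apply the b\^ete truncation: set $\varepsilon^\bullet=\sigma_{<m}\mu^\bullet$ for $\bullet\in\{',\emptyset,''\}$. The three truncated complexes form a short exact sequence of length $k-1$ whose columns are the $\xi_j$ with $j<m$, so by induction $[\tau_\varepsilon]=\sum_{j<m}(-1)^j[\ov{\xi}_j]$. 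I then assemble a commutative $3\times 3$ array of distinguished triangles in $\oDb(X)$ whose three row-triangles are $\tau_\varepsilon$, $\tau_\mu$, and the triangle associated to $\xi_m$ placed in degree $m$ (with class $(-1)^m[\ov{\xi}_m]$ by the base case), and whose three column-triangles are the b\^ete filtration triangles relating $[\ov{\varepsilon}^\bullet]$, $[\ov{\mu}^\bullet]$, and the sheaf $\ov{\mathcal{F}}_m^\bullet$ shifted into degree $m$. By Definition~\ref{def:1} each column-triangle is tightly distinguished, hence has vanishing class in $\KA(X)$. Applying Theorem~\ref{thm:10}~\ref{item:49} to the array yields
\begin{displaymath}
[\tau_\varepsilon]-[\tau_\mu]+(-1)^m[\ov{\xi}_m]=0,
\end{displaymath}
and rearranging gives $[\tau_\mu]=\sum_{j\le m}(-1)^j[\ov{\xi}_j]=\sum_j(-1)^j[\ov{\xi}_j]$, which completes the induction.

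The main obstacle is setting up the $3\times 3$ array coherently: one must verify that the b\^ete truncation $\sigma_{<m}$ is compatible with the given short exact sequence of complexes so that the row-triangles and column-triangles form a genuinely commutative array in $\oDb(X)$, and that each column-triangle is tightly distinguished in the sense of Theorem~\ref{thm:10}~\ref{item:51}. Both points follow directly from the recursive construction in Definition~\ref{def:1}, but require careful bookkeeping of degree shifts. The second identity is proved by the same induction transported to $\oDb(Y)$: Definition~\ref{def:5} provides the analogous tightly distinguished b\^ete column-triangles relating $[\ov{\Rd f_\ast\varepsilon^\bullet}]$, $[\ov{\Rd f_\ast\mu^\bullet}]$, and $\ov{\Rd f_\ast\mathcal{F}_m^\bullet}[m]$, and Lemma~\ref{lemm:3bis} ensures that $[\ov{\Rd f_\ast\mu}']-[\ov{\Rd f_\ast\mu}]+[\ov{\Rd f_\ast\mu}'']$ is the class of the distinguished triangle in $\oDb(Y)$ induced by the short exact sequence $0\to\Rd f_\ast\mu'\to\Rd f_\ast\mu\to\Rd f_\ast\mu''\to 0$.
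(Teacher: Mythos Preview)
Your approach is the same as the paper's: both proceed by induction on the length of the rows, invoking Definitions~\ref{def:1}, \ref{def:5} and the $3\times 3$ relation of Theorem~\ref{thm:10}~\ref{item:49}. The paper's proof is a one-line sketch and you have supplied the details.

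One point deserves to be made explicit. The b\^ete-truncated rows $\varepsilon^\bullet=\sigma_{<m}\mu^\bullet$ are no longer exact, so the statement you are actually proving by induction is the more general identity $[\tau_\mu]=\sum_j(-1)^j[\ov{\xi}_j]$ for \emph{arbitrary} bounded complexes $\mu',\mu,\mu''$ sitting in a short exact sequence with exact columns $\xi_j$; your opening reinterpretation of the right-hand side as the triangle class $[\tau_\mu]$ already sets this up, but you should say so, since otherwise the reader may try to apply the proposition itself as the induction hypothesis and find that its hypotheses fail. You then need a separate sentence recording that, when the rows \emph{are} acyclic, the triangle class $[\tau_\mu]$ coincides with the literal alternating sum $[\ov{\mu}']-[\ov{\mu}]+[\ov{\mu}'']$ in $\KA(X)$; this follows at once by comparing $\tau_\mu$ to the zero triangle via Theorem~\ref{thm:10}~\ref{item:48} together with~\ref{item:47} (or~\ref{item:28}).
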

\begin{proof}
  The lemma follows inductively taking into account definitions
  \ref{def:1} and \ref{def:5} and Theorem \ref{thm:10}~\ref{item:49}.
\end{proof}
\begin{corollary}\label{cor:5}
  Let $\ov{\varepsilon}\to\ov{\mu} $ be a morphism of exact sequences
  of metrized coherent sheaves. Let $f\colon X\to Y$ be a morphism of
  smooth complex varieties and fix families of metrics $\ov{\Rd
    f_{\ast}\varepsilon}$ and $\ov{\Rd f_{\ast}\mu}$. Then the following
  equalities in $\KA(X)$ and $\KA(Y)$, respectively, hold
  \begin{align}
    &[\ov{\cone(\varepsilon,\mu)}]=[\ov{\mu}]-[\ov{\varepsilon}],\\
    &[\ov{\Rd f_{\ast}\cone(\varepsilon,\mu})]=[\ov{\Rd
      f_{\ast}\mu}]-[\ov{\Rd f_{\ast}\varepsilon}].
  \end{align}
\end{corollary}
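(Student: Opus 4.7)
The plan is to reduce everything to the acyclic calculus of Section~\ref{sec:meager-complexes} by invoking Lemmas \ref{lemm:3} and \ref{lemm:3bis}, which already identify the classes $[\ov{\cone(\varepsilon,\mu)}]$ and $[\ov{\Rd f_{\ast}\cone(\varepsilon,\mu)}]$ (as objects of $\oDb(X)$ and $\oDb(Y)$) with hermitian cones in the derived category. Since $\varepsilon$ and $\mu$ are exact, $\cone(\varepsilon,\mu)$ is acyclic, so that $[\ov\varepsilon]$, $[\ov\mu]$, and $[\ov{\cone(\varepsilon,\mu)}]$ lie in the fiber $\mathfrak{F}^{-1}(0)=\KA(X)$, and tightly isomorphic classes coincide in $\KA(X)$. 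The same remark applies on $Y$ because $\Rd f_{\ast}$ sends zero objects of $\Db(X)$ to zero objects of $\Db(Y)$.

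For the first equality it therefore suffices to prove that whenever $\ov{\mathcal{A}}\dra\ov{\mathcal{B}}$ is a morphism in $\oDb(X)$ with $\ov{\mathcal{A}},\ov{\mathcal{B}}\in\KA(X)$, one has $[\ocone(\ov{\mathcal{A}},\ov{\mathcal{B}})]=[\ov{\mathcal{B}}]-[\ov{\mathcal{A}}]$. I would prove this by unwinding Definition~\ref{def:her_cone}: choose representatives $\ov E$ of the hermitian structure on $\ov{\mathcal{A}}$ and $\ov F$ of the hermitian structure on $\ov{\mathcal{B}}$ (both automatically acyclic objects of $\oV(X)$), together with a roof $\ov E\xleftarrow{\sim}\ov E''\to\ov F$ of complexes of hermitian vector bundles representing the morphism, where $\ov E''$ is given any hermitian metric. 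Then a representative of the hermitian cone is
\begin{displaymath}
\ov C=\ocone(\ov E'',\ov E)[1]\oplus\ocone(\ov E'',\ov F).
\end{displaymath}
Because $\ov E''$ is quasi-isomorphic to the acyclic complex $\ov E$, it is itself acyclic, so Theorem~\ref{thm:7}~\ref{item:35} gives $[\ocone(\ov E'',\ov E)]=[\ov E]-[\ov E'']$ and $[\ocone(\ov E'',\ov F)]=[\ov F]-[\ov E'']$. Using Theorem~\ref{thm:7}~\ref{item:39} to rewrite $[\ov E[1]]=-[\ov E]$ and $[\ocone(\ov E'',\ov E)[1]]=-[\ocone(\ov E'',\ov E)]$, one obtains
\begin{displaymath}
[\ov C]=-([\ov E]-[\ov E''])+([\ov F]-[\ov E''])=[\ov F]-[\ov E]=[\ov{\mathcal{B}}]-[\ov{\mathcal{A}}],
\end{displaymath}
as desired. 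Applying this to $\ov{\mathcal{A}}=[\ov\varepsilon]$ and $\ov{\mathcal{B}}=[\ov\mu]$ and combining with Lemma~\ref{lemm:3} yields the first equality.

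The second equality is proved by exactly the same procedure, replacing Lemma~\ref{lemm:3} by Lemma~\ref{lemm:3bis} and working in $\KA(Y)$ with the chosen representatives of $[\ov{\Rd f_{\ast}\varepsilon}]$ and $[\ov{\Rd f_{\ast}\mu}]$. The only point requiring some care is the book-keeping of signs and shifts in the computation of $[\ov C]$, but since every complex entering the cone formula is acyclic, the acyclic calculus of Theorem~\ref{thm:7} makes this mechanical; I do not anticipate any serious obstacle beyond verifying that the representatives chosen really produce the hermitian structure of Definition~\ref{def:her_cone}, which is automatic by Theorem~\ref{thm:6}.
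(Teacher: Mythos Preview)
Your proof is correct. Both you and the paper begin by invoking Lemmas~\ref{lemm:3} and~\ref{lemm:3bis} to identify $[\ov{\cone(\varepsilon,\mu)}]$ with the class of a hermitian cone. The paper then finishes by appealing to Proposition~\ref{prop:4}, applied to the diagram whose rows are $\mu$, $\cone(\varepsilon,\mu)$, $\varepsilon[1]$ and whose columns are the orthogonally split short exact sequences $0\to\ov\mu_j\to\ov\mu_j\oplus\ov\varepsilon_{j-1}\to\ov\varepsilon_{j-1}\to 0$ (whose classes vanish). You instead unfold Definition~\ref{def:her_cone} and compute $[\ov C]$ directly with the acyclic calculus of Theorem~\ref{thm:7}, bypassing Proposition~\ref{prop:4} and Theorem~\ref{thm:10} entirely. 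Your route is a bit more elementary in that it stays within Section~\ref{sec:meager-complexes}; the paper's route is terser because the bookkeeping has already been packaged into Proposition~\ref{prop:4}. Either way the content is the same identity $[\ocone(\ov{\mathcal A},\ov{\mathcal B})]=[\ov{\mathcal B}]-[\ov{\mathcal A}]$ for acyclic $\ov{\mathcal A},\ov{\mathcal B}$.
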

\begin{proof}
  The result readily follows from lemmas \ref{lemm:3}, \ref{lemm:3bis}
  and Proposition \ref{prop:4}.
\end{proof}

\noindent\textbf{Hermitian structures on cohomology.}
Let $\mathcal{F}$ be an object of $\Db(X)$ and denote by $\mathcal{H}$
its cohomology complex. Observe that $\mathcal{H}$ is a bounded
complex with 0 differentials. By the preceding discussion and because
$\KA(X)$ acts transitively on hermitian structures, giving a hermitian
structure on $\mathcal{H}$ amounts to give hermitian structures on the
individual pieces $\mathcal{H}^{i}$. We show that to these data there
is attached a natural hermitian structure on the complex
$\mathcal{F}$. This situation will arise when considering cohomology
sheaves endowed with $L^2$ metric structures. The construction is
recursive. If the cohomology complex is trivial, then we endow
$\mathcal{F}$ with the trivial hermitian structure. Otherwise, let
$\mathcal{H}^{m}$ be the highest non-zero cohomology sheaf. The
canonical filtration $\tau ^{\leq m}$ is given by
\begin{displaymath}
  \tau^{\leq
    m}\mathcal{F}\colon\quad\dots\to\mathcal{F}^{m-2}\to\mathcal{F}^{m-1}
  \to\ker(\dd^{m})\to  0.
\end{displaymath}
By the condition on the highest non vanishing cohomology sheaf, the
natural inclusion is a quasi-isomorphism:
\begin{equation}\label{eq:her_coh_1}
  \tau^{\leq m}\mathcal{F}\overset{\sim}{\longrightarrow}\mathcal{F}.
\end{equation}
We also introduce the subcomplex
\begin{displaymath}
  \widetilde{\mathcal{F}}\colon\quad\dots\to\mathcal{F}^{m-2}\to
  \mathcal{F}^{m-1}\to\Im(\dd^{m-1})\to 0.
\end{displaymath}
Observe that the cohomology complex of $\widetilde{\mathcal{F}}$ is
the b\^ete truncation $\mathcal{H}/\sigma^{\ge m}\mathcal{H}$. By induction,
$\widetilde{\mathcal{F}}$ carries an induced hermitian
structure. We also have an exact sequence
\begin{equation}\label{eq:her_cor_2}
  0\to\widetilde{\mathcal{F}}\to\tau^{\leq m}\mathcal{F}\to\mathcal{H}^{m}[-m]\to 0.
\end{equation}
Taking into account the quasi-isomorphism \eqref{eq:her_coh_1} and the
exact sequence \eqref{eq:her_cor_2}, we construct a natural
commutative diagram of distinguished triangles in $\Db(X)$
\begin{displaymath}
  \xymatrix{
    \mathcal{H}^{m}[-m-1]\ar@{-->}[r]^{\hspace{0.6cm} 0}\ar[d]^{\Id}	 &\widetilde{\mathcal{F}}\ar@{-->}[r]\ar[d]^{\Id}
    &\mathcal{F}\ar@{-->}[r]\ar@{-->}[d]^{\sim}	&\mathcal{H}^{m}[m]\ar[d]^{\Id}\\
    \mathcal{H}^{m}[-m-1]\ar@{-->}[r]^{\hspace{0.6cm} 0}	&\widetilde{\mathcal{F}}\ar@{-->}[r]
    &\cone(\mathcal{H}^{m}[-m-1],\widetilde{\mathcal{F}})\ar@{-->}[r]	&\mathcal{H}^{m}[m].
  }
\end{displaymath}
By the hermitian cone construction and Theorem \ref{thm:6bis}, we see
that hermitian structures on $\widetilde{\mathcal{F}}$ and
$\mathcal{H}^{m}$ induce a well defined hermitian structure on
$\mathcal{F}$.

\begin{definition}\label{def:her_coh}
  Let $\mathcal{F}$ be an object of $\Db(X)$ with cohomology complex
  $\mathcal{H}$. Assume the pieces $\mathcal{H}^{i}$ are endowed with
  hermitian structures. The hermitian structure on $\mathcal{F}$
  constructed above will be called the \emph{hermitian structure induced by
  the hermitian structure on the cohomology complex} and will be
denoted $(\mathcal{F},\ov{\mathcal{H}})$.
\end{definition}

The following proposition is a direct consequence of the definitions.

\begin{proposition}\label{prop:her_coh}
  Let $\varphi\colon\mathcal{F}_{1}\dra\mathcal{F}_{2}$ be an
  isomorphism in $\Db(X)$. Assume the pieces of the cohomology
  complexes $\mathcal{H}_{1}$, $\mathcal{H}_{2}$ of $\mathcal{F}_{1}$,
  $\mathcal{F}_{2}$ are endowed with hermitian structures. If the
  induced isomorphism in cohomology
  $\varphi_{\ast}\colon\mathcal{H}_{1}\to\mathcal{H}_{2}$ is tight,
  then $\varphi$ is tight for the induced hermitian structures on
  $\mathcal{F}_{1}$ and $\mathcal{F}_{2}$.
\end{proposition}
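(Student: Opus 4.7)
My plan is to proceed by induction on the number of non-vanishing cohomology sheaves of $\mathcal{F}_{1}$ (equivalently, of $\mathcal{F}_{2}$, since $\varphi$ is an isomorphism). The base case is when all cohomology vanishes: both $\mathcal{F}_{i}$ are then zero in $\Db(X)$ carrying the trivial hermitian structure, and any isomorphism between them is trivially tight.

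For the inductive step, let $m$ be the largest integer with $\mathcal{H}_{1}^{m}\neq 0$. The projection $\mathcal{F}_{i}\dra\mathcal{H}_{i}^{m}[-m]$ appearing in \eqref{eq:her_cor_2} is functorial in $\mathcal{F}_{i}$, so the square built from $\varphi$ and $\varphi_{\ast}^{m}[-m]$ commutes in $\Db(X)$. Applying axiom TR3 to the distinguished triangles $\widetilde{\mathcal{F}}_{i}\dra\mathcal{F}_{i}\dra\mathcal{H}_{i}^{m}[-m]\dra\widetilde{\mathcal{F}}_{i}[1]$, I obtain an isomorphism $\widetilde{\varphi}\colon\widetilde{\mathcal{F}}_{1}\dra\widetilde{\mathcal{F}}_{2}$ completing a morphism of distinguished triangles. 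The cohomology of $\widetilde{\mathcal{F}}_{i}$ agrees with that of $\mathcal{F}_{i}$ in degrees strictly less than $m$, so the action of $\widetilde{\varphi}$ on cohomology is, piece by piece, part of $\varphi_{\ast}$; it is therefore tight in each degree by hypothesis. The induction hypothesis (applied to $\widetilde{\varphi}$, whose cohomology has one fewer non-zero piece) thus gives that $\widetilde{\varphi}$ is a tight isomorphism.

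By Definition \ref{def:her_coh} the hermitian structure on $\mathcal{F}_{i}$ is exactly designed so that $\mathcal{F}_{i}$ is tightly identified with $\ocone(\mathcal{H}_{i}^{m}[-m-1],\widetilde{\mathcal{F}}_{i})$; together with Theorem \ref{thm:10}~\ref{item:17}, this says both rows of the commutative diagram above are tightly distinguished. Theorem \ref{thm:10}~\ref{item:48} then yields
\begin{displaymath}
0=[\widetilde{\varphi}]-[\varphi]+[\varphi_{\ast}^{m}[-m]].
\end{displaymath}
Tightness is preserved under shifts (by Theorem \ref{thm:7}~\ref{item:39}, since a meager complex is acyclic and the shift of an acyclic complex satisfies $[\ov E[1]]=-[\ov E]$), so $[\varphi_{\ast}^{m}[-m]]=0$; by induction $[\widetilde{\varphi}]=0$; and we conclude $[\varphi]=0$, i.e., $\varphi$ is tight by Theorem \ref{thm:10}~\ref{item:51}. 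The only delicate point is that the lift $\widetilde{\varphi}$ produced by TR3 is not unique, but this causes no issue because Theorem \ref{thm:6bis} guarantees that the hermitian structure on the cone—and hence the class equation above—depends only on the data of the distinguished triangle up to tight isomorphism.
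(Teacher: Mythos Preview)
Your proof is correct and is precisely the inductive unpacking of the recursive Definition \ref{def:her_coh} that the paper has in mind when it declares the proposition ``a direct consequence of the definitions'' without further argument. The only point worth tightening is your justification that $\widetilde{\varphi}$ acts on cohomology as $\varphi_{\ast}$ does: this is forced by the commutative square $\widetilde{\mathcal{F}}_{i}\to\mathcal{F}_{i}$ (which is an isomorphism on $H^{j}$ for $j<m$), so any TR3 lift works here, and your appeal to Theorem \ref{thm:6bis} for independence of the lift is exactly right.
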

\hfill $\square$

\section{Bott-Chern classes for
isomorphisms and distinguished triangles in $\oDb(X)$}
\label{sec:bott-chern-classes}

In this section we will define Bott-Chern classes for
isomorphisms and distinguished triangles in $\oDb(X)$.
The natural context where one can define the Bott-Chern
classes is that of Deligne
complexes. For details about Deligne complexes the reader is referred
to \cite{Burgos:CDB} and \cite{BurgosKramerKuehn:cacg}. In this
section we will use the same notations as in
\cite{BurgosLitcanu:SingularBC} \S1. In particular,
the \emph{Deligne algebra of differential forms} on $X$
is denoted by
\begin{math}
  \mathcal{D}^{\ast}(X,\ast).
\end{math}
and we use the notation
\begin{displaymath}
  \widetilde
  {\mathcal{D}}^{n}(X,p)=\left. \mathcal{D}^{n}(X,p)\right/
  \dd_{\mathcal{D}}\mathcal{D}^{n-1}(X,p).
\end{displaymath}

When characterizing axiomatically Bott-Chern classes, the
basic tool to exploit the functoriality axiom is to use a
deformation  parametrized by $\PP^{1}$. This argument leads to the
following lemma that will be used to prove the uniqueness of the
Bott-Chern classes introduced in this section.

\begin{lemma} \label{lemm:1}
  Let $X$ be a smooth complex variety. Let
  $\widetilde \varphi$ be an assignment that, to each smooth morphism
  of complex
  varieties $g\colon X'\to X$ and each acyclic complex $\ov A$ of
  hermitian vector bundles on $X'$
  assigns a class
  \begin{displaymath}
   \widetilde \varphi(\ov A)\in \bigoplus
  _{n,p}\widetilde{\mathcal{D}}^{n}(X',p)
  \end{displaymath}
  fulfilling the following properties:
  \begin{enumerate}
  \item (Differential equation) the equality
    \begin{math}
      \dd_{\mathcal{D}}\widetilde \varphi(\ov A)=0
    \end{math}
    holds;
  \item (Functoriality) for each morphism of smooth complex varieties
    $h\colon X''\to X'$ with $g\circ h $ smooth, we have
    \begin{math}
      h^{\ast} \widetilde \varphi(\ov A)=\widetilde \varphi(h^{\ast}\ov A)
    \end{math};
  \item (Normalization) if $\ov A$ is orthogonally split, then $\widetilde
    \varphi(\ov A)=0$.
  \end{enumerate}
  Then $\widetilde \varphi=0$.
\end{lemma}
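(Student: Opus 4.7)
The plan is a deformation argument through $\PP^{1}$, in the spirit of standard axiomatic characterizations of Bott-Chern secondary classes. Given a smooth morphism $g\colon X'\to X$ and an acyclic complex $\ov A$ of hermitian vector bundles on $X'$, I would first produce an acyclic complex $\ov{\mathcal{A}}$ of hermitian vector bundles on $X''=X'\times\PP^{1}$ whose underlying complex of bundles is $p_{1}^{\ast}A$, but whose hermitian metric is a holomorphic deformation of the pullback metric such that $i_{0}^{\ast}\ov{\mathcal{A}}$ is isometric to $\ov A$ while $i_{\infty}^{\ast}\ov{\mathcal{A}}$ is orthogonally split. Such a deformation can be assembled inductively on the length of $A$ by iterating the classical one-step construction that, given a short exact sequence of hermitian vector bundles, yields a family over $\PP^{1}$ interpolating between the original sequence and its orthogonal direct sum.

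Having this deformation, I would apply the functoriality axiom to the inclusions $i_{0},i_{\infty}\colon X'\to X''$. The composite $g'=g\circ p_{1}\colon X''\to X$ is smooth, and $g'\circ i_{a}=g$ is smooth for $a\in\{0,\infty\}$, so the hypothesis of functoriality is satisfied. This gives
\begin{displaymath}
i_{0}^{\ast}\widetilde{\varphi}(\ov{\mathcal{A}})=\widetilde{\varphi}(\ov A),\qquad
i_{\infty}^{\ast}\widetilde{\varphi}(\ov{\mathcal{A}})=\widetilde{\varphi}(i_{\infty}^{\ast}\ov{\mathcal{A}})=0,
\end{displaymath}
the second equality by normalization.

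It then remains to show that $i_{0}^{\ast}\widetilde{\varphi}(\ov{\mathcal{A}})=i_{\infty}^{\ast}\widetilde{\varphi}(\ov{\mathcal{A}})$ in $\widetilde{\mathcal{D}}^{\ast}(X',\ast)$. By the differential equation, $\widetilde{\varphi}(\ov{\mathcal{A}})$ is $\dd_{\mathcal{D}}$-closed and thus represents a Deligne cohomology class on $X''$. A Künneth-type decomposition of $H^{\ast}_{\mathcal{D}}(X'\times\PP^{1},\ast)$ combined with the explicit description of $H^{\ast}_{\mathcal{D}}(\PP^{1},\ast)$ shows that the two fibre restriction maps coincide on closed Deligne classes, so the displayed equalities yield $\widetilde{\varphi}(\ov A)=0$.

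The main obstacle is the first step: constructing a genuine holomorphic deformation on $X'\times\PP^{1}$ whose fibre at $\infty$ is orthogonally split as an acyclic complex of hermitian bundles, rather than merely split as a complex. A secondary point is the verification that on Deligne cohomology of $X'\times\PP^{1}$ the two fibre restrictions really do agree on closed classes, which is a standard fact in this framework but must be invoked with care for the particular truncated Deligne complex $\mathcal{D}^{\ast}(X',\ast)$ used here.
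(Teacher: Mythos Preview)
Your approach is correct and is precisely the standard $\PP^{1}$-deformation argument that the paper invokes by reference to \cite[Thm.~2.3]{BurgosLitcanu:SingularBC}. One minor technical point: in the usual construction the deformed complex on $X'\times\PP^{1}$ is not literally $p_{1}^{\ast}A$ with only a varying metric---the differentials (or the bundles themselves via transgression) must also be deformed in order for the fibre at $\infty$ to be orthogonally split as a complex---but you already flag this construction as the step requiring care, and once it is carried out your use of functoriality, normalization, and the homotopy-invariance of Deligne cohomology along $\PP^{1}$ completes the argument exactly as intended.
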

\begin{proof}
  The argument of the proof of \cite[Thm.
  2.3]{BurgosLitcanu:SingularBC} applies \emph{mutatis mutandis} to
  the present situation.
\end{proof}

\begin{definition} \label{def:24}
 An \emph{additive genus in Deligne cohomology} is a
characteristic class $\varphi$ for vector bundles of any rank in the sense of
\cite[Def. 1.5]{BurgosLitcanu:SingularBC} that satisfies the
equation
\begin{equation}
  \label{eq:75}
  \varphi(E_{1}\oplus E_{2})=\varphi(E_{1})+\varphi(E_{2}).
\end{equation}

\end{definition}
Let $\DD$ denote the base ring for Deligne cohomology (see
\cite{BurgosLitcanu:SingularBC} before Definition 1.5). A consequence
of \cite[Thm. 1.8]{BurgosLitcanu:SingularBC} is that there is a
bijection between the set of additive genera in Deligne cohomology and
the set of power series in one variable $\DD[[x]]$. To each power
series $\varphi \in \DD[[x]]$ it corresponds the unique additive genus
such that
\begin{displaymath}
  \varphi(L)=\varphi (c_{1}(L))
\end{displaymath}
for every line bundle $L$.

\begin{definition} \label{def:25}
  A \emph{real additive genus} is an additive genus such that the
  corresponding power series belong to $\RR[[x]]$.
\end{definition}

\begin{remark}\label{rem:7}
  It is clear that, if $\varphi$ is a real additive genus, then for each
  vector bundle $E$ we have
  \begin{displaymath}
    \varphi(E)\in \bigoplus_{p}H_{\mathcal{D}}^{2p}(X,\RR(p))
  \end{displaymath}

\end{remark}

We now focus on additive genera, for instance the Chern character is a
real additive genus. Let
$\varphi $ be such a genus. Using Chern-Weil theory, to each hermitian
vector bundle
$\overline E$
on $X$ we can attach a closed characteristic form
\begin{displaymath}
  \varphi (\overline E)\in \bigoplus_{n,p}\mathcal{D}^{n}(X,p).
\end{displaymath}
If $\ov E$ is an object of
$\oV(X)$, then
we define
$$\varphi (\overline E)=\sum_{i}
(-1)^{i} \varphi (\overline E^{i}).$$
If $\overline E$ is acyclic, following \cite[Sec.
2]{BurgosLitcanu:SingularBC},
we associate to it a Bott-Chern characteristic class
\begin{displaymath}
  \widetilde \varphi (\overline E)\in
  \bigoplus_{n,p}\widetilde {\mathcal{D}}^{n-1}(X,p)
\end{displaymath}
that satisfies the differential equation
\begin{displaymath}
  \dd_{\mathcal{D}}\widetilde \varphi (\overline E)=\varphi
  (\overline E).
\end{displaymath}

In fact, \cite[Thm. 2.3]{BurgosLitcanu:SingularBC} for additive
genera can be restated as follows.

\begin{proposition} \label{prop:9}
  Let $\varphi $ be an additive genus. Then there
  is a unique group homomorphism
  \begin{displaymath}
    \widetilde \varphi \colon \KA(X)\to \bigoplus_{n,p}\widetilde
    {\mathcal{D}}^{n-1}(X,p)
  \end{displaymath}
  satisfying the properties:
  \begin{enumerate}
  \item (Differential equation) $\dd_{\mathcal{D}}
    \widetilde\varphi(\ov E)= \varphi(\ov E).$
  \item (Functoriality) If $f\colon X\to Y$ is a morphism of smooth
    complex varieties, then
    $
    \widetilde{\varphi}(\Ld f^{\ast}(\ov{E}))=f^{\ast}(\widetilde{\varphi}(\ov{E})).
    $
  \end{enumerate}
\end{proposition}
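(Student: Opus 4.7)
The plan is to invoke the universal property of $\KA(X)$ given by Theorem~\ref{thm:8} together with the existing construction of Bott--Chern classes for acyclic complexes of hermitian vector bundles from \cite[Sec.~2]{BurgosLitcanu:SingularBC}. That reference already attaches, to an additive genus $\varphi$ and any acyclic $\ov E\in\oVo(X)$, a class $\widetilde{\varphi}(\ov E)\in\bigoplus_{n,p}\widetilde{\mathcal{D}}^{n-1}(X,p)$ satisfying the differential equation, functoriality under arbitrary morphisms, additivity on orthogonally split short exact sequences of acyclic complexes, and vanishing on orthogonally split complexes. What remains is to descend this assignment through the quotient by meager complexes.

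To apply Theorem~\ref{thm:8} I would verify its two hypotheses for the assignment $\ov E\mapsto \widetilde{\varphi}(\ov E)$ viewed as a map $\Ob\oVo(X)\to \bigoplus_{n,p}\widetilde{\mathcal{D}}^{n-1}(X,p)$. The normalization axiom asks that the complex $0\to\ov A\xrightarrow{\Id}\ov A\to 0$ be sent to zero; but this complex is orthogonally split as an acyclic complex of hermitian vector bundles, so its Bott--Chern class vanishes by the normalization property recalled above. The additivity axiom for orthogonally split short exact sequences in $\oVo(X)$ is exactly the additivity property enjoyed by the classical Bott--Chern class of acyclic complexes, combined with the fact that $\varphi$ is additive in the sense of \eqref{eq:75}. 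Theorem~\ref{thm:8} then produces the required group homomorphism $\widetilde{\varphi}\colon \KA(X)\to\bigoplus_{n,p}\widetilde{\mathcal{D}}^{n-1}(X,p)$. The differential equation $d_{\mathcal{D}}\widetilde{\varphi}(\ov E)=\varphi(\ov E)$ is immediate on representatives. Functoriality under a morphism $f\colon X\to Y$ follows from the analogous property of the classical Bott--Chern class together with Theorem~\ref{thm:9}, which guarantees that $\Ld f^{\ast}$ is well defined on hermitian structures and on $\KA$, so that both sides of $\widetilde{\varphi}(\Ld f^{\ast}\ov E)=f^{\ast}\widetilde{\varphi}(\ov E)$ can be computed on the same representative.

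For uniqueness, suppose $\widetilde{\varphi}_{1}$ and $\widetilde{\varphi}_{2}$ are two homomorphisms satisfying the differential equation and functoriality. Their difference $\widetilde{\psi}:=\widetilde{\varphi}_{1}-\widetilde{\varphi}_{2}$ is $d_{\mathcal{D}}$-closed on each $\KA(X')$ and compatible with pullback under arbitrary morphisms of smooth complex varieties; in particular it is compatible under smooth pullbacks, as required by Lemma~\ref{lemm:1}. Any orthogonally split acyclic complex belongs to $\mathscr{M}_{0}\subset\mathscr{M}$ and therefore represents $0$ in $\KA(X')$ by Theorem~\ref{thm:7}~\ref{item:34}, so $\widetilde{\psi}$ automatically satisfies the normalization hypothesis of Lemma~\ref{lemm:1}. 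That lemma forces $\widetilde{\psi}=0$, proving uniqueness.

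The main obstacle is a bookkeeping one: ensuring that the additivity and normalization satisfied by the Bott--Chern class constructed in \cite{BurgosLitcanu:SingularBC} match exactly the axioms of Theorem~\ref{thm:8}, in particular that the splitting conventions (orthogonally split short exact sequences of \emph{complexes} versus orthogonally split short exact sequences of vector bundles in each degree) coincide. Once this is checked, the whole proof is essentially a formal application of Theorem~\ref{thm:8} for existence and of Lemma~\ref{lemm:1} for uniqueness.
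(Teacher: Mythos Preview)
Your argument is correct and follows essentially the same approach as the paper: existence via Theorem~\ref{thm:8} applied to the classical Bott--Chern assignment, and uniqueness by reducing to the characterization of Bott--Chern classes for acyclic complexes. The only cosmetic difference is that for uniqueness the paper invokes \cite[Thm.~2.3]{BurgosLitcanu:SingularBC} directly (noting that any group homomorphism kills meager, hence orthogonally split, complexes), whereas you invoke Lemma~\ref{lemm:1}, which is precisely the internal restatement of that argument; these are the same proof.
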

\begin{proof}
  For the uniqueness, we observe that, if $\widetilde \varphi$ is a group
  homomorphism then $\widetilde\varphi (\ov 0)=0$. Hence, if $\ov E$
  is a orthogonally split complex, then it is meager and therefore
  $\widetilde\varphi (\ov E)=0$. Thus, the assignment that, to each acyclic
  complex bounded $\ov E$, associates the class $\widetilde
  \varphi([\ov E])$ satisfies the
  conditions of \cite[Thm. 2.3]{BurgosLitcanu:SingularBC}, hence is
  unique. For the existence, we note that
  Bott-Chern classes for additive genera satisfy the
  hypothesis of Theorem \ref{thm:8}. Hence the result follows.
\end{proof}

\begin{remark} \label{rem:6}
If
\begin{displaymath}
  \overline{\varepsilon }:\quad
  0\to \overline{\mathcal{F}}_{m}\to \dots \to
  \overline{\mathcal{F}}_{l}\to 0
\end{displaymath}
is an acyclic complex of coherent sheaves on $X$ provided with
hermitian structures
$\ov {\mathcal{F}}_{i}=(\mathcal{F}_{i},\ov F_{i}\dra
\mathcal{F}_{i})$, by Definition \ref{def:1} we have an object $[\ov
\varepsilon ]\in \KA(X)$, hence a class $\widetilde \varphi([\ov
\varepsilon ])$. In the case of the Chern character, in \cite[Thm.
2.24]{BurgosLitcanu:SingularBC}, a class $\widetilde
{\ch}(\ov \varepsilon )$ is defined. It follows from \cite[Thm.
2.24]{BurgosLitcanu:SingularBC} that both classes agree. That is,
$\widetilde{\ch}([\ov \varepsilon ])=\widetilde{\ch}(\ov \varepsilon
)$. For this reason we will denote $\widetilde \varphi([\ov
\varepsilon ])$ by $\widetilde \varphi(\ov
\varepsilon)$.
\end{remark}

\begin{definition}\label{definition:forms_complexes}
  Let
  $\overline{\mathcal{F}}=(\ov E\overset{\sim}{\dashrightarrow}\mathcal{F})$
  be an object of $\oDb(X)$. Let $\varphi$  denote
  an additive genus. We denote the form
  \begin{equation*}
    \varphi(\overline{\mathcal{F}})=\varphi(\ov E)\in \bigoplus_{n,p}\mathcal{D}^{n}(X,p)
  \end{equation*}
  and the class
  \begin{equation*}
    \varphi(\mathcal{F})=[\varphi(\ov E)]\in
    \bigoplus_{n,p}H_{\mathcal{D}}^{n}(X,\RR(p)).
  \end{equation*}
  Note that the form $\varphi(\overline{\mathcal{F}})$ only depends on the hermitian structure
  and not on a particular representative thanks to Proposition
  \ref{prop:7} and Proposition \ref{prop:9}. The class
  $\varphi(\mathcal{F})$ only depends on the object $\mathcal{F}$ and
  not on the hermitian structure.
\end{definition}
\begin{remark}
  The reason to restrict to additive genera when working with the
  derived category is now clear: there is no
  canonical way to attach a rank to $\oplus_{i\even}\mathcal{F}^{i}$
  (respectively $\oplus_{i\odd} \mathcal{F}^{i}$). The naive choice
  $\rk(\oplus_{i\even} E^{i})$ (respectively $\rk(\oplus_{i\odd} E^{i})$)
  does depend on $E\dashrightarrow\mathcal{F}$. Thus we can not define
  Bott-Chern classes by the general rule from
  \cite{BurgosLitcanu:SingularBC}. The case of a multiplicative genus
  such as the Todd genus will be considered later.
\end{remark}

Next we will construct Bott-Chern classes for isomorphisms
in $\oDb(X)$.
\begin{definition}
  Let $f\colon \overline{\mathcal{F}}\dashrightarrow\overline{\mathcal{G}}$
  be a morphism in $\oDb(X)$ and $\varphi$ an additive
  genus. We define the differential form
  \begin{equation*}
    \varphi(f)=\varphi(\overline{\mathcal{G}})-
    \varphi(\overline{\mathcal{F}}).
  \end{equation*}
\end{definition}

\begin{theorem}\label{theorem:ch_tilde_qiso}
  Let $\varphi$ be an additive genus. There is a unique way to attach to
  every isomorphism in $\oDb(X)$
  \begin{math}
    f\colon (\overline{F}\dashrightarrow\mathcal{F})
    \overset{\sim}{\dashrightarrow}
    (\overline{G}\dashrightarrow\mathcal{G})
  \end{math}
  a Bott-Chern class
  \begin{displaymath}
    \widetilde{\varphi}(f)\in\bigoplus_{n,p}\widetilde{\mathcal{D}}^{n-1}(X,p)
  \end{displaymath}
  such that the following axioms are satisfied:
  \begin{enumerate}
  \item (Differential equation)
    \begin{math}
      \dd_{\mathcal{D}}\widetilde{\varphi}(f)=\varphi(f).
    \end{math}
  \item (Functoriality) If $g\colon X'\rightarrow X$ is a morphism of smooth
    noetherian schemes over $\CC$, then
    \begin{displaymath}
      \widetilde{\varphi}(\Ld g^{\ast}(f))=g^{\ast}(\widetilde{\varphi}(f)).
    \end{displaymath}
  \item (Normalization) If $f$ is a tight isomorphism, then
    \begin{math}
      \widetilde{\varphi}(f)=0.
    \end{math}
  \end{enumerate}
\end{theorem}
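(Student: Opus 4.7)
The plan is to define $\widetilde{\varphi}(f):=\widetilde{\varphi}([f])$, where $[f]=[\ocone(f)]\in\KA(X)$ is the class from \eqref{eq:57} and $\widetilde{\varphi}$ on the right is the group homomorphism constructed in Proposition \ref{prop:9}. This is well defined because $\ocone(f)$ is acyclic (the underlying cone of an isomorphism vanishes in $\Db(X)$) and its hermitian structure is unique up to tight isomorphism, so the class in $\KA(X)$ depends only on $f$.

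To verify the axioms, I pick a representative $\ov{C}(f)=\ocone(\ov{E}'',\ov{E})[1]\oplus\ocone(\ov{E}'',\ov{E}')$ of the hermitian cone as in Definition \ref{def:her_cone}. Its Chern--Weil form computes to
\begin{displaymath}
\varphi(\ov{C}(f))=(\varphi(\ov{E}'')-\varphi(\ov{E}))+(\varphi(\ov{E}')-\varphi(\ov{E}''))=\varphi(\ov{E}')-\varphi(\ov{E})=\varphi(f),
\end{displaymath}
so the differential equation in Proposition \ref{prop:9} yields $\dd_{\mathcal{D}}\widetilde{\varphi}(f)=\varphi(f)$. Functoriality follows from Theorem \ref{thm:10}\ref{item:50} (giving $\Ld g^{\ast}[f]=[\Ld g^{\ast}f]$) together with the functoriality of $\widetilde{\varphi}$ on $\KA$ in Proposition \ref{prop:9}. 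Normalization is immediate from Theorem \ref{thm:10}\ref{item:51}, since a tight isomorphism has $[f]=0$.

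For uniqueness, let $\widetilde{\varphi}'$ be a second assignment satisfying the axioms and set $\delta=\widetilde{\varphi}-\widetilde{\varphi}'$. Then $\delta$ is $\dd_{\mathcal{D}}$-closed, commutes with smooth pullback, and vanishes on tight isomorphisms. The strategy is to reduce the vanishing of $\delta$ to Lemma \ref{lemm:1}: to each acyclic complex $\ov{A}$ on a smooth variety $g\colon X'\to X$, attach the identity isomorphism $f_{\ov{A}}\colon \ov{0}\dra \ov{0}+\ov{A}$ in $\oDb(X')$ between the trivial hermitian structure on the zero object and the one obtained by the $\KA(X')$-action of Theorem \ref{thm:13}\ref{item:44}, so that $[f_{\ov{A}}]=[\ov{A}]$. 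Setting $\widetilde{\delta}(\ov{A}):=\delta(f_{\ov{A}})$ produces an assignment that is $\dd_{\mathcal{D}}$-closed, functorial under smooth pullback, and vanishes on orthogonally split $\ov{A}$ (as $f_{\ov{A}}$ is then tight); Lemma \ref{lemm:1} therefore forces $\widetilde{\delta}\equiv 0$.

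The hard part will be bootstrapping from this vanishing on the specific isomorphisms $f_{\ov{A}}$ to arbitrary $f\colon \ov{\mathcal{F}}\dra \ov{\mathcal{G}}$. Using parallel transport (Theorem \ref{thm:13}\ref{item:45}), $f$ factors as $\Id_{\mathcal{G}}\circ (f\colon \ov{\mathcal{F}}\dra \mathfrak{t}_{f}\ov{\mathcal{F}})$ with the second factor tight, and the first an identity isomorphism between hermitian structures on $\mathcal{G}$ differing by $[f]=[\ocone(f)]\in\KA(X)$. I expect a $\PP^{1}$-deformation argument in the spirit of Lemma \ref{lemm:1}, applied to families over $X\times\PP^{1}$ interpolating hermitian structures, to yield the composition additivity $\delta(g\circ h)=\delta(g)+\delta(h)$ from the three axioms and to reduce the identity isomorphism $\Id_{\mathcal{G}}$ to an $f_{\ov{A}}$-type isomorphism with $\ov{A}=\ocone(f)$ by forming orthogonal sums within the $\KA(X)$-torsor $\mathfrak{F}^{-1}(\mathcal{G})$. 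With composition additivity in hand, $\delta(f)=\delta(\Id_{\mathcal{G}})=\widetilde{\delta}(\ocone(f))=0$, completing the uniqueness.
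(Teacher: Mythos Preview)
Your existence argument is correct and matches the paper's: define $\widetilde{\varphi}(f)=\widetilde{\varphi}([f])$ and read off the axioms from Proposition~\ref{prop:9} and Theorem~\ref{thm:10}.

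Your uniqueness argument, however, has a genuine gap. You reduce to showing $\delta$ vanishes on the special isomorphisms $f_{\ov A}\colon \ov 0\dra \ov 0+\ov A$ via Lemma~\ref{lemm:1}, and then try to bootstrap to arbitrary $f$. But the bootstrapping hinges on two unproved steps: (a) composition additivity $\delta(g\circ h)=\delta(g)+\delta(h)$ for an \emph{arbitrary} assignment satisfying the three axioms, and (b) reducing the identity morphism $\Id_{\mathcal{G}}\colon\mathfrak t_{f}\ov{\mathcal F}\dra\ov{\mathcal G}$ to an $f_{\ov A}$-type morphism on the zero object. You write ``I expect a $\PP^{1}$-deformation argument\ldots to yield the composition additivity'' --- but that is exactly what needs proving, and there is no obvious $\PP^{1}$-family of isomorphisms in $\oDb$ interpolating $g\circ h$ with something split. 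Composition additivity (Proposition~\ref{proposition:ch_tilde_comp}) is proved in the paper \emph{after} Theorem~\ref{theorem:ch_tilde_qiso}, using the explicit construction, so you cannot invoke it here.

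The paper sidesteps both issues by never leaving the object $\ov G$. After replacing the representative $\ov F$ by the tightly related $\ov G\oplus\ocone(\ov F,\ov G)[-1]$ (which does not change the object of $\oDb(X)$, hence does not change $f$), the isomorphism $f$ is literally represented by the projection $\ov G\oplus\ov A\to\ov G$ with $\ov A$ acyclic. Then one applies Lemma~\ref{lemm:1} to the assignment
\[
\ov E\ \longmapsto\ \widetilde{\varphi}_{0}\bigl(g^{\ast}\ov G\oplus\ov E\to g^{\ast}\ov G\bigr)+\widetilde{\varphi}(\ov E)
\]
on smooth $g\colon X'\to X$. The point is that carrying $g^{\ast}\ov G$ along costs nothing for the hypotheses of Lemma~\ref{lemm:1}, and this yields $\widetilde{\varphi}_{0}(f)=-\widetilde{\varphi}(\ov A)$ directly, with no appeal to composition additivity or to the zero object.
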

\begin{proof}
For the existence we define
\begin{equation}
  \label{eq:56}
  \widetilde \varphi(f)=\widetilde \varphi([f]),
\end{equation}
where $[f]\in\KA(X)$ is the class of $f$ given by equation
\eqref{eq:57}.
That $\widetilde \varphi$ satisfies the axioms
follows from Proposition \ref{prop:9} and Theorem
\ref{thm:9}.

We now focus on the uniqueness. Assume such a theory
$f\mapsto\widetilde{\varphi}_{0}(f)$ exists. Fix $f$ as in the statement.
Since $\widetilde \varphi_{0}$ is well defined, by replacing $\ov F$
by one that is tightly related, we may assume that $f$ is
realized by a morphism of complexes
\begin{displaymath}
  f\colon \ov F\longrightarrow \ov G.
\end{displaymath}
We factorize $f$ as
\begin{displaymath}
  \ov F\overset{\alpha }{\longrightarrow }
  \ov G\oplus \ocone(\ov F,\ov G)[-1]
  \overset{\beta }{\longrightarrow}
  \ov G,
\end{displaymath}
where both arrows are zero on the second factor of the middle
complex. Since $\alpha $ is a tight morphism and $\ocone(\ov F,\ov
G)[-1]$ is acyclic, we are reduced to the case when $\ov F=\ov
G\oplus \ov A$, with $\ov A$ an acyclic complex and $f$ is the
projection onto the first factor.

For each smooth
morphism $g\colon X'\to X$ and each acyclic complex of vector bundles
$\ov E$ on $X'$, we denote
\begin{displaymath}
 \widetilde \varphi_{1}(\ov E)=\widetilde \varphi_{0} (g^{\ast}\ov G\oplus \ov
E\to g^{\ast}\ov G)+\widetilde \varphi(\ov E),
\end{displaymath}
where $\widetilde \varphi$ is the usual Bott-Chern  form for acyclic
complexes of hermitian vector bundles associated to
$\varphi$.
Then $\widetilde \varphi_{1}$ satisfies the
hypothesis of Lemma \ref{lemm:1}, so $\widetilde \varphi_{1}=0$. Therefore
\begin{math}
  \widetilde \varphi(f)=-\widetilde \varphi(\ov A).
\end{math}
\end{proof}

\begin{proposition}\label{proposition:ch_tilde_comp}
Let
$f\colon \overline{\mathcal{F}}\dashrightarrow\overline{\mathcal{G}}$
and $g\colon \overline{\mathcal{G}}\dashrightarrow
\overline{\mathcal{H}}$ be two isomorphisms in $\oDb(X)$. Then:
\begin{displaymath}
	\widetilde{\varphi}(g\circ f)=
        \widetilde{\varphi}(g)+\widetilde{\varphi}(f).
\end{displaymath}
In particular, $\widetilde{\varphi}(f^{-1})=-\widetilde{\varphi}(f)$.
\end{proposition}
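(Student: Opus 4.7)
The plan is to reduce the proposition to two results already established in the paper: the class formula $[h\circ f]=[h]+[f]$ in $\KA(X)$ (Theorem \ref{thm:10}~\ref{item:16}) and the fact that $\widetilde{\varphi}\colon\KA(X)\to\bigoplus_{n,p}\widetilde{\mathcal{D}}^{n-1}(X,p)$ is a group homomorphism (Proposition \ref{prop:9}).

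First I would unpack the definition. By equation \eqref{eq:56}, for any isomorphism $f$ in $\oDb(X)$ we have $\widetilde{\varphi}(f)=\widetilde{\varphi}([f])$, where $[f]=[\ocone(f)]\in\KA(X)$. Thus for the composable isomorphisms $f$ and $g$,
\begin{displaymath}
  \widetilde{\varphi}(g\circ f)=\widetilde{\varphi}([g\circ f]).
\end{displaymath}
Applying Theorem \ref{thm:10}~\ref{item:16} gives $[g\circ f]=[g]+[f]$ in $\KA(X)$, and since $\widetilde{\varphi}$ is additive on $\KA(X)$ we obtain
\begin{displaymath}
  \widetilde{\varphi}(g\circ f)=\widetilde{\varphi}([g])+\widetilde{\varphi}([f])=\widetilde{\varphi}(g)+\widetilde{\varphi}(f),
\end{displaymath}
which is the first claim.

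For the second claim, I would apply the additivity just proved to the pair $f,f^{-1}$:
\begin{displaymath}
  \widetilde{\varphi}(f^{-1}\circ f)=\widetilde{\varphi}(f^{-1})+\widetilde{\varphi}(f).
\end{displaymath}
The identity $\Id_{\overline{\mathcal{F}}}$ is obviously a tight isomorphism, so by the normalization axiom of Theorem \ref{theorem:ch_tilde_qiso}, $\widetilde{\varphi}(\Id_{\overline{\mathcal{F}}})=0$. Hence $\widetilde{\varphi}(f^{-1})=-\widetilde{\varphi}(f)$.

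There is no real obstacle here: all the non-trivial work has already been done in establishing Theorem \ref{thm:10}~\ref{item:16} (which itself relies on the acyclic calculus, in particular Theorem \ref{thm:7}~\ref{item:41}) and in proving that $\widetilde{\varphi}$ factors through $\KA(X)$ as a group homomorphism. The proposition is essentially a direct corollary packaging those two facts through the definition \eqref{eq:56}.
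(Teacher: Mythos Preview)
Your proof is correct and follows exactly the approach of the paper, which simply says ``The statement follows from Theorem~\ref{thm:10}~\ref{item:16}.'' You have merely made explicit the two ingredients (the definition~\eqref{eq:56} and the additivity of $\widetilde\varphi$ on $\KA(X)$) that the paper leaves implicit.
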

\begin{proof}
  The statement follows from Theorem \ref{thm:10}~\ref{item:16}.
\end{proof}

The Bott-Chern classes behave well under shift.

\begin{proposition}
  Let $f\colon \ov{\mathcal{F}}\dra \ov{\mathcal{G}}$ be an
  isomorphism in $\oDb(X)$. Let $f[i]\colon \ov{\mathcal{F}}[i]\dra
  \ov{\mathcal{G}}[i]$ be the shifted isomorphism. Then
  \begin{displaymath}
    (-1)^{i}\widetilde{\varphi}(f[i])=\widetilde{\varphi}(f).
  \end{displaymath}
\end{proposition}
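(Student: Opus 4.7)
The plan is to exploit the identity $\widetilde{\varphi}(f)=\widetilde{\varphi}([f])$ established via formulas \eqref{eq:56} and \eqref{eq:57}, reducing the question to a computation in the group $\KA(X)$. First I would choose a morphism of complexes $f\colon \ov{E}\to\ov{F}$ in $\oV(X)$ that represents the given isomorphism in $\oDb(X)$; then the shifted morphism of complexes $f[i]\colon \ov{E}[i]\to\ov{F}[i]$ represents $f[i]$ in $\oDb(X)$. The standard sign conventions give a canonical isometry
\begin{displaymath}
  \ocone(f[i])\cong \ocone(f)[i],
\end{displaymath}
so that, passing to classes in $\KA(X)$, one obtains $[f[i]]=[\ocone(f)[i]]$.

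Next I would apply Theorem~\ref{thm:7}~\ref{item:39}, which states that $[\ov{G}[1]]=-[\ov{G}]$ for any acyclic complex $\ov{G}$. Iterating this relation $i$ times with $\ov{G}=\ocone(f)$ yields
\begin{displaymath}
  [f[i]]=[\ocone(f)[i]]=(-1)^{i}[\ocone(f)]=(-1)^{i}[f]
\end{displaymath}
in the abelian group $\KA(X)$.

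Finally, since $\widetilde{\varphi}\colon\KA(X)\to\bigoplus_{n,p}\widetilde{\mathcal{D}}^{n-1}(X,p)$ is a group homomorphism by Proposition~\ref{prop:9}, applying it to the previous identity gives
\begin{displaymath}
  \widetilde{\varphi}(f[i])=\widetilde{\varphi}([f[i]])=(-1)^{i}\widetilde{\varphi}([f])=(-1)^{i}\widetilde{\varphi}(f),
\end{displaymath}
and multiplying by $(-1)^{i}$ yields the claimed equality. The only point requiring care is verifying the natural isometry $\ocone(f[i])\cong\ocone(f)[i]$: this is a routine unpacking of the sign conventions in the definition of $\cone$ and of the shift functor, which makes both sides the same underlying graded hermitian vector bundle with identical differentials. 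Once this is granted, the proof is purely formal. Alternatively, one could avoid the explicit isometry by invoking the uniqueness part of Theorem~\ref{theorem:ch_tilde_qiso}: the assignment $f\mapsto(-1)^{i}\widetilde{\varphi}(f[i])$ satisfies the differential equation, functoriality and normalization axioms, hence coincides with $\widetilde{\varphi}(f)$.
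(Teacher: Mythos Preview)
Your proof is correct. Your main line of argument differs from the paper's: you compute directly in $\KA(X)$ by exhibiting the isometry $\ocone(f[i])\cong\ocone(f)[i]$ (which, as you note, requires inserting the sign $(-1)^{i}$ on one summand to make the differentials match) and then invoking Theorem~\ref{thm:7}~\ref{item:39} together with the fact that $\widetilde\varphi$ is a group homomorphism. The paper instead uses only the uniqueness clause of Theorem~\ref{theorem:ch_tilde_qiso}, observing that $f\mapsto(-1)^{i}\widetilde\varphi(f[i])$ satisfies the three characterizing axioms and therefore coincides with $\widetilde\varphi$---exactly the alternative you mention in your last sentence. Your direct route has the virtue of making the identity $[f[i]]=(-1)^{i}[f]$ explicit in $\KA(X)$, which is a slightly stronger statement; the paper's route is shorter and avoids any sign bookkeeping with cones. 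One small remark: when you say ``choose a morphism of complexes $f\colon\ov E\to\ov F$ in $\oV(X)$ that represents the given isomorphism,'' you are implicitly replacing a representative of the hermitian structure by a tightly related one so that the roof collapses to an honest morphism; this is legitimate (cf.\ the proof of Theorem~\ref{theorem:ch_tilde_qiso}) but worth stating.
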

\begin{proof}
  The assignment $f\mapsto (-1)^{i}\widetilde{\varphi}(f[i])$
  satisfies the characterizing properties of Theorem
  \ref{theorem:ch_tilde_qiso}. Hence it agrees with $\widetilde
  \varphi$.
\end{proof}

The following notation will be sometimes used.
\begin{notation}
Let $\mathcal{F}$ be an object of $\Db(X)$ and consider two choices of
hermitian structures $\ov{\mathcal{F}}$ and $\ov{\mathcal{F}}'$. Then
we write
\begin{displaymath}
  \widetilde{\varphi}(\ov{\mathcal{F}},\ov{\mathcal{F}}')=
  \widetilde{\varphi}(\ov{\mathcal{F}}
  \overset{\Id}{\dra}\ov{\mathcal{F}}').
\end{displaymath}
Thus $\dd_{\mathcal{D}}\widetilde{\varphi}
(\ov{\mathcal{F}},\ov{\mathcal{F}}') =
\varphi(\ov{\mathcal{F}}')-\varphi(\ov{\mathcal{F}}).
$
\end{notation}

\begin{example}\label{exm:1}
  Let $\ov
  {\mathcal{F}}=(\mathcal{F},\mathcal{F}\dashrightarrow\overline E)$
  be an object of $\oDb(X)$. Let $\mathcal{H}^{i}$ denote the
  cohomology sheaves of $\mathcal{F}$ and assume that we have chosen
  hermitian structures $\ov{\mathcal{H}}^{i} $ of each
  $\mathcal{H}^{i}$. In the case when the sheaves $\mathcal{H}^{i}$
  are vector bundles and the hermitian structures are hermitian
  metrics, X. Ma, in the paper \cite{Ma:MR1765553}, has associated to
  these data a Bott-Chern class, that we
  denote $M(\ov {\mathcal{F}},\ov {\mathcal{H}})$. By
  the characterization given by Ma of $M(\ov {\mathcal{F}},\ov
  {\mathcal{H}})$, it is immediate that
  \begin{displaymath}
    M(\ov {\mathcal{F}},\ov {\mathcal{H}})=
    \cht(\ov {\mathcal{F}},(\mathcal{F},\ov {\mathcal{H}})),
  \end{displaymath}
  where $(\mathcal{F},\ov {\mathcal{H}})$ is as in Definition
  \ref{def:her_coh}.
\end{example}

Our next aim is to construct Bott-Chern classes for
distinguished triangles.

\begin{definition}
Let $\ov \tau $  be a distinguished triangle in $\oDb(X)$,
\begin{displaymath}
	\overline{\tau}\colon\
        \overline{\mathcal{F}}\overset{u}{\dashrightarrow}\overline{\mathcal{G}}
	\overset{v}{\dashrightarrow}\overline{\mathcal{H}}
        \overset{w}{\dashrightarrow}\overline{\mathcal{F}}[1]
	\overset{u}{\dashrightarrow}\dots
\end{displaymath}
For an additive genus
$\varphi$, we attach the differential form
\begin{displaymath}
	\varphi(\overline{\tau})=\varphi(\overline{\mathcal{F}})-
        \varphi(\overline{\mathcal{G}})+\varphi(\overline{\mathcal{H}}).
\end{displaymath}
\end{definition}

Notice that if $\ov \tau$ is tightly distinguished, then $\varphi(\ov
\tau )=0$. Moreover, for any distinguished triangle $\ov \tau $ as above, the
rotated triangle
\begin{displaymath}
	\overline{\tau}'\colon\
        \overline{\mathcal{G}}\overset{v}{\dashrightarrow}\overline{\mathcal{H}}
	\overset{w}{\dashrightarrow}\overline{\mathcal{F}}[1]
        \overset{-u[1]}{\dashrightarrow}\overline{\mathcal{G}}[1]
	\overset{v[1]}{\dashrightarrow}\dots
\end{displaymath}
satisfies
\begin{math}
  \varphi(\ov \tau ')=-\varphi(\ov \tau).
\end{math}

\begin{theorem}\label{thm:16}
  Let $\varphi$ be an additive genus. There is a unique way to attach to
  every distinguished triangle in $\oDb(X)$
  \begin{displaymath}
    \overline{\tau}:\quad
    \overline{\mathcal{F}}\overset{u}{\dashrightarrow}\overline{\mathcal{G}}
    \overset{v}{\dashrightarrow}\overline{\mathcal{H}}
    \overset{w}{\dashrightarrow}\overline{\mathcal{F}}[1]
    \overset{u[1]}{\dashrightarrow}\dots
  \end{displaymath}
  a Bott-Chern class
  \begin{displaymath}
    \widetilde{\varphi}(\overline{\tau})
    \in\bigoplus_{n,p}\widetilde{\mathcal{D}}^{n-1}(X,p)
  \end{displaymath}
  such that the following axioms are satisfied:
  \begin{enumerate}
  \item (Differential equation)
    \begin{math}
      \dd_{\mathcal{D}}\widetilde{\varphi}(\overline{\tau})=
      \varphi(\overline{\tau}).
    \end{math}
  \item (Functoriality) If $g\colon X^{\prime}\rightarrow X$ is a morphism of smooth noetherian schemes over $\CC$, then
    \begin{displaymath}
      \widetilde{\varphi}(\Ld g^{\ast}(\overline{\tau}))=g^{\ast}\widetilde{\varphi}(\overline{\tau}).
    \end{displaymath}
  \item (Normalization) If $\overline{\tau}$ is tightly distinguished, then
    \begin{math}
      \widetilde{\varphi}(\overline{\tau})=0.
    \end{math}
  \end{enumerate}
\end{theorem}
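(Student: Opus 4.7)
My proposed definition is
\begin{equation*}
\widetilde{\varphi}(\ov{\tau}) := \widetilde{\varphi}([\ov{\tau}]),
\end{equation*}
where $[\ov{\tau}] \in \KA(X)$ is the class attached to $\ov{\tau}$ by \eqref{eq:65} and $\widetilde{\varphi}\colon \KA(X) \to \bigoplus_{n,p}\widetilde{\mathcal{D}}^{n-1}(X,p)$ is the group homomorphism of Proposition \ref{prop:9}.

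To establish existence, I verify the three axioms. For the differential equation, I unwind $[\ov{\tau}] = [\alpha] = [\ocone(\alpha)]$, by \eqref{eq:65} and \eqref{eq:57}, where $\alpha\colon \ocone(\ov{\mathcal{F}},\ov{\mathcal{G}}) \dra \ov{\mathcal{H}}$ is the comparison isomorphism in \eqref{eq:63}. Additivity of $\varphi$ for orthogonal direct sums (which governs $\varphi$ of a hermitian cone as the difference of target and source) gives
\begin{equation*}
\varphi(\ocone(\alpha)) = \varphi(\ov{\mathcal{H}}) - \varphi(\ocone(\ov{\mathcal{F}},\ov{\mathcal{G}})) = \varphi(\ov{\mathcal{F}}) - \varphi(\ov{\mathcal{G}}) + \varphi(\ov{\mathcal{H}}) = \varphi(\ov{\tau}),
\end{equation*}
so the differential equation from Proposition \ref{prop:9} yields $\dd_{\mathcal{D}}\widetilde{\varphi}(\ov{\tau}) = \varphi(\ov{\tau})$. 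Functoriality follows from Theorem \ref{thm:10}\ref{item:50} combined with the functoriality of $\widetilde{\varphi}$ on $\KA(X)$, and normalization is Theorem \ref{thm:10}\ref{item:51}.

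For uniqueness I mimic the proof of Theorem \ref{theorem:ch_tilde_qiso}. Let $\widetilde{\varphi}_0$ be a second theory satisfying the three axioms. Given $\ov{\tau}$, I realize $u$ by a morphism $u\colon \ov F \to \ov G$ in $\oV(X)$; the canonical cone triangle $\ov F \to \ov G \to \ocone(u) \to \ov F[1]$ is tightly distinguished, so axiom (3) annihilates its $\widetilde{\varphi}_0$-class. Using the torsor structure from Theorem \ref{thm:13}\ref{item:44}, I reduce further to the ``split'' triangle $\ov F \to \ov G \to \ocone(u) \oplus \ov A \to \ov F[1]$ for some acyclic $\ov A$, with the obvious inclusion and projection. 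For each smooth $g\colon X' \to X$ and each acyclic complex $\ov E$ on $X'$, set
\begin{equation*}
\widetilde{\varphi}_1(\ov E) := \widetilde{\varphi}_0\bigl(g^\ast\ov F \to g^\ast\ov G \to g^\ast\ocone(u) \oplus \ov E \to g^\ast\ov F[1]\bigr) - \widetilde{\varphi}(\ov E).
\end{equation*}
A routine check shows $\widetilde{\varphi}_1$ is $\dd_{\mathcal{D}}$-closed, functorial in $g$, and vanishes when $\ov E$ is orthogonally split (the triangle then becomes tightly distinguished, killing the first summand by axiom (3), while $\widetilde{\varphi}(\ov E) = 0$ by Chern-Weil normalization). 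Lemma \ref{lemm:1} forces $\widetilde{\varphi}_1 \equiv 0$, whence $\widetilde{\varphi}_0(\ov{\tau}) = \widetilde{\varphi}(\ov A) = \widetilde{\varphi}([\ov{\tau}]) = \widetilde{\varphi}(\ov{\tau})$.

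The main obstacle will be carrying out the reduction to the split form: one must show that replacing the hermitian structure on $\ov{\mathcal{H}}$ by a tightly related representative of the shape $\ocone(u) \oplus \ov A$, and adjusting the connecting morphisms accordingly, does not affect $\widetilde{\varphi}_0(\ov{\tau})$. I expect to accomplish this by embedding the original and modified triangles into a $3 \times 3$ commutative diagram of distinguished triangles in the style of Theorem \ref{thm:10}\ref{item:49}, arranging the auxiliary rows and columns to be tightly distinguished so that their $\widetilde{\varphi}_0$-contributions vanish by axiom (3); the delicate bookkeeping required to ensure that all auxiliary triangles genuinely become tight is the tricky point.
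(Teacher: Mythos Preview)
Your approach matches the paper's exactly: define $\widetilde{\varphi}(\ov{\tau})=\widetilde{\varphi}([\ov{\tau}])$ for existence, and for uniqueness reduce to a triangle of the shape $\ov F\to\ov G\to\ocone(u)\oplus\ov K\to\ov F[1]$ with $\ov K$ acyclic, then invoke Lemma~\ref{lemm:1}.

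The ``main obstacle'' you flag at the end is not actually an obstacle, and your proposed $3\times 3$ workaround is both unnecessary and unworkable. Recall that an object of $\oDb(X)$ is a pair $(\mathcal{F},h)$ with $h$ an \emph{equivalence class} of metrics that fit tight (Definition~\ref{def:category_oDb}). Hence replacing the hermitian metric on $\ov{\mathcal{H}}$ by a tightly related one yields \emph{the same object} of $\oDb(X)$; since morphisms in $\oDb(X)$ are just morphisms in $\Db(X)$, the triangle $\ov{\tau}$ is literally unchanged as a diagram in $\oDb(X)$, and $\widetilde{\varphi}_0(\ov{\tau})$ is automatically preserved. That is all the paper means by ``by replacing representatives of the hermitian structures by tightly related ones, we may assume\dots''. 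By contrast, your planned $3\times 3$ argument would require an additivity relation for $\widetilde{\varphi}_0$ across such a diagram, which is not among the axioms (i)--(iii); Theorem~\ref{thm:10}\ref{item:49} is a statement about the class $[\ov{\tau}]\in\KA(X)$, not about an arbitrary theory $\widetilde{\varphi}_0$, so you cannot invoke it here. Drop the last paragraph and the proof is complete.
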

\begin{proof}
To show the existence we write
\begin{equation}
  \label{eq:58}
 \widetilde \varphi(\ov \tau)=\widetilde \varphi([\ov \tau ]).
\end{equation}
Theorem \ref{thm:10} implies that it satisfies the axioms.

To prove the uniqueness, observe that, by replacing representatives of
the hermitian structures by tightly related ones, we may assume that
the distinguished triangle is represented by
\begin{displaymath}
  \ov F \longrightarrow \ov G \longrightarrow \ocone(\ov F,\ov
  G)\oplus \ov K \longrightarrow \ov F[1],
\end{displaymath}
with $\ov K$ acyclic.
Then Lemma \ref{lemm:1} shows that the axioms imply
\begin{math}
  \widetilde \varphi(\ov \tau )=\widetilde \varphi(\ov K).
\end{math}
\end{proof}

\begin{remark}\label{rem:1} The normalization axiom can be replaced by
  the apparently weaker condition that $\widetilde \varphi(\ov \tau
  )=0$ for all distinguished triangles of the form
  \begin{displaymath}
    \ov {\mathcal{F}}\dra \ov {\mathcal{F}}\overset{\perp}{\oplus}
    \ov {\mathcal{G}}\dra  \ov {\mathcal{G}} \dra
  \end{displaymath}
  where the maps are the natural inclusion and projection.
\end{remark}

Theorem
\ref{thm:10}~\ref{item:17}-\ref{item:49} can be easily translated to Bott-Chern
classes.

\section{Multiplicative genera, the Todd genus and the category
  $\oSm_{\ast/\CC}$}
\label{sec:multiplicative-genera}
Let $\psi $ be a multiplicative genus, such that the piece of degree
zero is $\psi ^{0}=1$, and
\begin{displaymath}
  \varphi=\log(\psi ).
\end{displaymath}
It is a well defined additive genus, because, by the condition above,
the power series $\log (\psi )$ contains only finitely many terms in each degree.

If $\overline \theta $ is either a hermitian vector bundle, a complex
of hermitian vector bundles, a morphism in $\oDb(X)$ or a
distinguished triangle in $\oDb(X)$ we can write
\begin{displaymath}
  \psi (\ov \theta )=\exp(\varphi(\ov \theta )).
\end{displaymath}

All the results of the previous sections can be translated to the
multiplicative genus $\psi $. In particular, if $\ov \theta $ is an
acyclic complex of hermitian vector bundles, an isomorphism in
$\oDb(X)$ or a distinguished triangle in $\oDb(X)$, we define a
Bott-Chern class
\begin{displaymath}
  \widetilde \psi_{m} (\ov \theta)=
  \frac{\exp(\varphi(\ov \theta))-1}{\varphi(\ov \theta)}
  \widetilde\varphi(\ov \theta).
\end{displaymath}

\begin{theorem}\label{thm:11} The characteristic class $\widetilde
  \psi_{m} (\ov \theta)$ satisfies:
  \begin{enumerate}
  \item (Differential equation)
    \begin{math}
      \dd_{\mathcal{D}}\widetilde{\psi}_{m}(\overline{\theta
      })=\psi(\overline{\theta })-1.
    \end{math}
  \item (Functoriality) If $g\colon X^{\prime}\rightarrow X$ is a morphism
    of smooth noetherian schemes over $\CC$, then
    \begin{displaymath}
      \widetilde{\psi}_{m}(\Ld g^{\ast}(\overline{\theta }))
      =g^{\ast}\widetilde{\psi}_{m}(\overline{\theta }).
    \end{displaymath}
  \item (Normalization) If $\overline{\theta }$ is either a meager
    complex, a tight isomorphism or a tightly distinguished triangle,
    then
    \begin{math}
      \widetilde{\psi}_{m}(\overline{\theta })=0.
    \end{math}
  \end{enumerate}
  Moreover $\widetilde \psi_{m} $ is uniquely characterized by these
  properties.
\end{theorem}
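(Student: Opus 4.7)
The plan is to verify the three axioms directly from the definition, and then to establish uniqueness via the same deformation-to-$\PP^{1}$ argument that underlies Lemma \ref{lemm:1} together with the reductions already used in Theorems \ref{theorem:ch_tilde_qiso} and \ref{thm:16}. The existence will be essentially formal once one expands the multiplicative series; the real content of the statement is the uniqueness.

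For the existence, I would write
\begin{displaymath}
  \widetilde{\psi}_{m}(\ov\theta)
  =\sum_{n\ge 0}\frac{\varphi(\ov\theta)^{n}}{(n+1)!}\,\widetilde{\varphi}(\ov\theta),
\end{displaymath}
which is a finite sum in each degree. Since $\varphi(\ov\theta)$ is a characteristic form and therefore $\dd_{\mathcal{D}}$-closed, the class $\varphi(\ov\theta)^{n}\,\widetilde{\varphi}(\ov\theta)$ is well defined in $\widetilde{\mathcal{D}}$, and applying $\dd_{\mathcal{D}}$ termwise together with $\dd_{\mathcal{D}}\widetilde{\varphi}(\ov\theta)=\varphi(\ov\theta)$ gives
\begin{displaymath}
  \dd_{\mathcal{D}}\widetilde{\psi}_{m}(\ov\theta)=\sum_{n\ge 0}\frac{\varphi(\ov\theta)^{n+1}}{(n+1)!}
  =\exp(\varphi(\ov\theta))-1=\psi(\ov\theta)-1,
\end{displaymath}
which is the differential equation. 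Functoriality follows termwise from the functoriality of $\varphi$ and $\widetilde{\varphi}$ (Proposition \ref{prop:9}, Theorem \ref{theorem:ch_tilde_qiso}, Theorem \ref{thm:16}). For the normalization, in each of the three cases (meager complex, tight isomorphism, tightly distinguished triangle) the corresponding result for the additive genus gives $\widetilde{\varphi}(\ov\theta)=0$, so every summand vanishes and $\widetilde{\psi}_{m}(\ov\theta)=0$.

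For the uniqueness, let $\widetilde{\psi}'$ be a second assignment satisfying the three axioms, and set $\Delta=\widetilde{\psi}'-\widetilde{\psi}_{m}$. Then $\Delta$ is $\dd_{\mathcal{D}}$-closed (both have the same differential), functorial, and vanishes on meager complexes, tight isomorphisms and tightly distinguished triangles. When $\ov\theta$ is an acyclic complex of hermitian vector bundles, Lemma \ref{lemm:1} applied to $\Delta$ forces $\Delta(\ov\theta)=0$, since every orthogonally split complex is in $\mathscr{M}_{0}$. When $\ov\theta=f$ is an isomorphism, I would reduce as in the proof of Theorem \ref{theorem:ch_tilde_qiso} to the case of a projection $\ov G\oplus\ov A\to\ov G$ with $\ov A$ acyclic, and then apply Lemma \ref{lemm:1} to the auxiliary assignment $\ov E\mapsto \Delta(\ov G\oplus\ov E\to\ov G)$ to conclude $\Delta(f)=0$. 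The case of a distinguished triangle reduces analogously, following the proof of Theorem \ref{thm:16}, to an acyclic complex via the standard representation $\ov F\to \ov G\to \ocone(\ov F,\ov G)\oplus\ov K\to \ov F[1]$ with $\ov K$ acyclic.

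The only non-mechanical point is checking that the auxiliary assignments built from $\Delta$ in the reduction step are still $\dd_{\mathcal{D}}$-closed: this is not as immediate as in the additive case, because the differential equation now involves $\psi(\ov\theta)-1$ rather than $\varphi(\ov\theta)$. However, $\psi$ is multiplicative on orthogonal sums, so $\psi(\ov G\oplus\ov E)-1=\psi(\ov G)(\psi(\ov E)-1)+(\psi(\ov G)-1)$, and the contributions of $\widetilde{\psi}'(\ov G\oplus\ov E\to\ov G)$ and of a suitable multiple of $\widetilde{\psi}_{m}(\ov E)$ combine to yield a closed, functorial, split-vanishing assignment on acyclic complexes. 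This is the main technical point; once it is in place, Lemma \ref{lemm:1} closes the argument.
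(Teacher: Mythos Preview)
Your argument is correct and is essentially the expanded version of what the paper leaves implicit: the paper gives no proof of Theorem~\ref{thm:11}, treating it as an immediate translation of the additive results via the formula $\widetilde{\psi}_{m}(\ov\theta)=\dfrac{\exp(\varphi(\ov\theta))-1}{\varphi(\ov\theta)}\,\widetilde{\varphi}(\ov\theta)$.

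One simplification: the ``non-mechanical point'' you flag in the last paragraph is not actually a difficulty. You have already set $\Delta=\widetilde{\psi}'-\widetilde{\psi}_{m}$ and observed that $\dd_{\mathcal{D}}\Delta=0$ on every input, because both theories satisfy the \emph{same} differential equation $\dd_{\mathcal{D}}(\cdot)=\psi(\ov\theta)-1$. Hence the auxiliary assignment $\ov E\mapsto \Delta(g^{\ast}\ov G\oplus\ov E\to g^{\ast}\ov G)$ is automatically $\dd_{\mathcal{D}}$-closed, with no need to unpack $\psi(\ov G\oplus\ov E)-1$ or invoke multiplicativity. The only thing to check for Lemma~\ref{lemm:1} is that this assignment vanishes when $\ov E$ is orthogonally split; this holds because then the projection is tight (its cone $\ocone(\Id_{\ov G})\oplus\ov E[1]$ is meager), so $\Delta$ vanishes on it by normalization. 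The same remark applies to the distinguished-triangle case. With this observation your uniqueness proof is entirely mechanical, matching the paper's implicit stance.
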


\begin{remark}
  For an acyclic complex of vector bundles $\ov E$, using the general
  procedure for arbitrary symmetric power series, we can associate a
  Bott-Chern class $\widetilde \psi (\ov E)$ (see for instance
  \cite[Thm. 2.3]{BurgosLitcanu:SingularBC}) that satisfies the
  differential equation
  \begin{displaymath}
    \dd_{\mathcal{D}} \widetilde \psi (\ov E)= \prod _{k \text{ even }}\psi (\ov
    E^{k})-\prod _{k \text{ odd }}\psi (\ov
    E^{k}),
  \end{displaymath}
  whereas $\widetilde \psi _{m}$ satisfies the differential equation
  \begin{equation} \label{eq:62}
    \dd_{\mathcal{D}} \widetilde \psi _{m}(\ov E)= \prod _{k}\psi (\ov
    E^{k})^{(-1)^{k}}-1.
  \end{equation}
  In fact both Bott-Chern classes are related by
  \begin{equation}\label{eq:76}
    \widetilde \psi _{m}(\ov E)=
    \widetilde \psi (\ov E)\prod_{k \text{ odd}}\psi (\ov E^{k})^{-1}.
  \end{equation}
\end{remark}

The main example of a multiplicative genus with the above properties
is the Todd genus $\Td$. From now on we will treat only this
case. Following the above procedure, to  the Todd genus we can
associate two Bott-Chern classes for acyclic complexes
of vector bundles: the one given by the
general theory, denoted by $\widetilde {\Td}$, and the one given by the
theory of multiplicative genera, denoted $\widetilde {\Td}_{m}$. Both
are related by the equation \eqref{eq:76}. Note however that, for
isomorphisms and distinguished triangles in $\oDb(X)$, we only
have the multiplicative version.

We now consider morphisms between smooth complex varieties and
relative hermitian structures.

\begin{definition}
Let $f\colon X\rightarrow Y$ be a morphism of smooth complex varieties. The
\textit{tangent complex} of $f$ is the complex
\begin{displaymath}
  T_{f}\colon \quad 0\longrightarrow T_{X}
  \overset{df}{\longrightarrow} f^{\ast}T_{Y}\longrightarrow 0
\end{displaymath}
where $T_{X}$ is placed in degree 0 and $f^{\ast}T_{Y}$ is placed in
degree 1. It defines an object $T_{f}\in \Ob \Db(X)$. A
\emph{relative hermitian structure on} $f$ is the choice of an object $\ov T_{f}\in
\oDb(X)$ over $T_{f}$.
\end{definition}

The
following particular situations are of special interest:

\begin{enumerate}
\item[--] suppose $f\colon X\hookrightarrow Y$ is a closed immersion. Let
  $N_{X/Y}[-1]$ be the normal bundle to $X$ in $Y$, considered as a
  complex concentrated in degree 1. By definition, there is a natural
  quasi-isomorphism $p\colon T_{f}\overset{\sim}{\rightarrow} N_{X/Y}[-1]$ in
  $\Cb(X)$, and hence an isomorphism $p^{-1}\colon N_{X/Y}[-1]\overset{\sim}
  {\dashrightarrow} T_{f}$ in $\Db(X)$. Therefore, a hermitian metric $h$
  on the vector bundle $N_{X/Y}$ naturally induces a hermitian
  structure $p^{-1}\colon (N_{X/Y}[-1],h) \dashrightarrow T_{f}$ on $T_{f}$. Let
  $\overline{T}_{f}$ be the corresponding object in $\oDb(X)$. Then we have
  \begin{displaymath}
    \Td(\overline{T}_{f})=\Td(N_{X/Y}[-1],h)=\Td(N_{X/Y},h)^{-1};
  \end{displaymath}
\item[--] suppose $f\colon X\rightarrow Y$ is a smooth morphism. Let
  $T_{X/Y}$ be the relative tangent bundle on $X$, considered as a
  complex concentrated in degree $0$. By definition, there is a
  natural quasi-isomorphism $\iota\colon T_{X/Y}\overset{\sim}{\rightarrow}
  T_{f}$ in $\Cb(X)$. Any choice of hermitian metric $h$ on $T_{X/Y}$
  naturally induces a hermitian structure
  $\iota\colon (T_{X/Y},h)\dashrightarrow T_{f}$. If $\overline{T}_{f}$ denotes
  the corresponding object in $\oDb(X)$, then we find
  \begin{displaymath}
    \Td(\overline{T}_{f})=\Td(T_{X/Y},h).
  \end{displaymath}
\end{enumerate}
Let now $g\colon Y\rightarrow Z$ be another morphism of smooth
varieties over $\CC$. The tangent complexes $T_{f}$, $T_{g}$ and
$T_{g\circ f}$ fit into a distinguished triangle in $\Db(X)$
\begin{displaymath}
	\mathcal{T}\colon T_f\dra T_{g\circ f}\dra
        \Ld f^{\ast}T_g\dra T_f[1].
\end{displaymath}

\begin{definition}\label{def:16}
We denote $\oSm_{\ast/\CC}$ the following data:

(i) The class $\rm{Ob}\,\oSm_{\ast/\CC}$ of smooth complex varieties.

(ii) For each $X,Y\in \rm{Ob}\,\oSm_{\ast/\CC}$, a set of morphisms $\oSm_{\ast/\CC}(X,Y)$ whose elements are
pairs $\ov f=(f,\ov T_{f})$, where $f\colon X\to Y$ is a projective morphism and $\ov T_{f}$
is a hermitian structure on $T_{f}$. When $\ov f$ is given we will
denote the hermitian structure by $T_{\ov f}$. A hermitian
structure on $T_{f}$ will also be called a hermitian structure on
$f$.

(iii) For each pair of morphisms $\ov f\colon X\to Y$
and $\ov g\colon Y\to Z$, the composition defined as
\begin{displaymath}
    \ov g\circ \ov f=(g\circ f,\ocone(\Ld f^{\ast} T_{\ov g}[-1], T_{\ov f})).
\end{displaymath}
\end{definition}

We shall prove (Theorem \ref{thm:17}) that $\oSm_{\ast/\CC}$ is a
category. Before this, we proceed with some examples emphasizing some
properties of the composition rule.

\begin{example}\label{exm:3}
  Let $f\colon X\to Y$
  and $g\colon Y\to Z$, be projective morphisms of smooth complex
  varieties. Assume
  that we have chosen hermitian metrics on the tangent vector bundles
  $T_{X}$, $T_{Y}$ and $T_{Z}$. Denote by $\ov f$, $\ov g$ and
  $\ov{g\circ f}$ the morphism of $\oSm_{\ast/\CC}$  determined by
  these metrics. Then
  \begin{displaymath}
    \ov g\circ \ov f=\ov{g\circ f}.
  \end{displaymath}
  This is seen as follows. By the choice of metrics, there is a tight
  isomorphism
  \begin{displaymath}
 \ocone(T_{\ov f},T_{\ov{g\circ f}})\to \Ld
  f^{\ast}T_{\ov g}.   
  \end{displaymath}
  Then the natural maps
  \begin{displaymath}
    T_{\ov g\circ \ov f}\to \ocone(\Ld
      f^{\ast}T_{\ov g}[-1],T_{\ov f})\to
    \ocone(\ocone(T_{\ov f},T_{\ov{g\circ f}})[-1],T_{\ov f})\to
    T_{\ov{g\circ f}}
  \end{displaymath}
  are tight isomorphisms.
\end{example}

\begin{example} \label{exm:4}
    Let $f\colon X\to Y$
  and $g\colon Y\to Z$, be smooth projective morphisms of smooth complex
  varieties.  Choose hermitian metrics on the relative tangent
  vector bundles
  $T_{f}$, $T_{g}$ and $T_{g\circ f}$. Denote by $\ov f$, $\ov g$ and
  $\ov{g\circ f}$ the morphism of $\oSm_{\ast/\CC}$  determined by
  these metrics. There is a short exact sequence of hermitian vector
  bundles
  \begin{displaymath}
    \ov \varepsilon \colon
    0\longrightarrow \ov T_{f}
    \longrightarrow \ov T_{g\circ f}
    \longrightarrow f^{\ast} \ov T_{g}
    \longrightarrow 0,
  \end{displaymath}
  that we consider as an acyclic complex declaring $f^{\ast} \ov T_{g}$
  of degree $0$. The morphism $f^{\ast}T_{\ov g}[-1]\dra T_{\ov f}$ is
  represented by the diagram
  \begin{displaymath}
    \xymatrix{
      & \ocone(T_{\ov f},T_{\ov{g\circ f}})[-1]
      \ar[dl]_{\sim} \ar[rd] &\\
      f^{\ast}T_{\ov g}[-1] && T_{\ov f}.
    }
  \end{displaymath}
Thus, by the definition of a composition we have
\begin{displaymath}
  T_{\ov g \circ \ov f}=
  \ocone(\ocone(T_{\ov f},T_{\ov{g\circ f}})[-1],f^{\ast}T_{\ov
    g}[-1])[1]\oplus
  \ocone(\ocone(T_{\ov f},T_{\ov{g\circ f}})[-1],T_{\ov f}).
\end{displaymath}
In general this hermitian structure is different to $T_{\ov{g\circ
    f}}$.

\emph{Claim.} The equality of hermitian structures
\begin{equation}
  \label{eq:84}
  T_{\ov g \circ \ov f}=
  T_{\ov{g\circ f}}+ [\ov \varepsilon ]
\end{equation}
holds.
\begin{proof}[Proof of the claim]
  We have a commutative diagram of distinguished triangles
  \begin{displaymath}
    \xymatrix{
      \ov{\varepsilon} &T_{\ov{f}}\ar[r]\ar[d]_{\Id} &T_{\ov{g\circ f}}\ar[r]\ar[d]
      &f^{\ast}T_{\ov{g}}\ar[d]_{\Id}\ar@{-->}[r] &T_{\ov{f}}[1]\ar[d]_{\Id}\\
      \ov{\tau} &T_{\ov{f}}\ar[r] &T_{\ov{g}\circ \ov{f}}\ar[r]
      &f^{\ast}T_{\ov{g}}\ar@{-->}[r] &T_{\ov{f}}[1].
      }
      \end{displaymath}
By construction the triangle $\ov{\tau}$ is tightly distinguished,
hence $[\ov{\tau}]=0$. Therefore, according to Theorem \ref{thm:10}
\ref{item:48}, we have
\begin{displaymath}
  [T_{\ov{g\circ f}}\rightarrow T_{\ov{g}\circ\ov{f}}]=[\ov{\varepsilon}].
\end{displaymath}
The claim follows.
\end{proof}
\end{example}

\begin{theorem}\label{thm:17}
$\oSm_{\ast/\CC}$ is a category.
\end{theorem}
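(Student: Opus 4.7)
The plan is to verify the three defining properties of a category: (a) composition produces a well-defined morphism, (b) each object admits an identity, and (c) composition is associative. Well-definedness (a) is essentially built into the definition: the composite of two projective morphisms is projective, the distinguished triangle $T_f \dra T_{g\circ f} \dra \Ld f^{\ast}T_g \dra T_f[1]$ identifies $T_{g\circ f}$ with the cone of $\Ld f^{\ast}T_g[-1] \dra T_f$ in $\Db(X)$, and by Theorem \ref{thm:6}, Theorem \ref{thm:6bis}, and Theorem \ref{thm:9} the resulting hermitian structure on $T_{g\circ f}$ is well-defined up to tight isomorphism.

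For identities (b), set $\ov{\Id}_X = (\Id_X, T_{\ov{\Id}_X})$, where $T_{\Id_X}$ is the acyclic complex $0 \to T_X \xrightarrow{\Id} T_X \to 0$ (representing a zero object of $\Db(X)$) and $T_{\ov{\Id}_X}$ is the trivial hermitian structure on it. For any $\ov f \colon X \to Y$, unwinding the definition yields
\[\ov f \circ \ov{\Id}_X = (f, \ocone(T_{\ov f}[-1], \ov 0)), \qquad \ov{\Id}_Y \circ \ov f = (f, \ocone(\Ld f^{\ast}\ov 0[-1], T_{\ov f})),\]
and both second entries are tightly isomorphic to $T_{\ov f}$ by Proposition \ref{prop:10}, together with Theorem \ref{thm:9} which ensures $\Ld f^{\ast}\ov 0$ is again meager.

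The associativity (c) is the substantive part. Introducing $A = \Ld f^{\ast}\Ld g^{\ast} T_{\ov h}$, $B = \Ld f^{\ast}T_{\ov g}$, $C = T_{\ov f}$, together with the canonical morphisms $\alpha \colon A[-1] \dra B$ and $\beta \colon B[-1] \dra C$ arising from the rotated tangent triangles for $g$ (pulled back by $f$) and for $f$, and using Theorem \ref{thm:9} to commute $\Ld f^{\ast}$ past $\ocone$, one obtains
\[T_{(\ov h \circ \ov g) \circ \ov f} = \ocone\bigl(\ocone(\alpha)[-1] \dra C\bigr), \qquad T_{\ov h \circ (\ov g \circ \ov f)} = \ocone\bigl(A[-1] \dra \ocone(\beta)\bigr).\]
The maps $\ocone(\alpha)[-1] \dra C$ and $A[-1] \dra \ocone(\beta)$ are the rotated connecting morphisms of the tangent triangles of $f$ and of $g\circ f$ applied to the two bracketings $h\circ g \circ f = (h\circ g) \circ f = h \circ (g\circ f)$. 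Their compatibility is captured by the $3\times 3$ grid of distinguished triangles naturally associated with the triple $(f, g, h)$; the desired tight isomorphism between the two iterated hermitian cones then follows from Lemma \ref{lemm:13}, in the form of equation \eqref{eq:53}, applied to the appropriate commutative-up-to-homotopy square extracted from this grid, after passing to strict representatives of $\alpha$ and $\beta$ via Lemma \ref{lemma:morphisms_Db}.

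The main obstacle is the associativity step. One must extract from the $3\times 3$ grid of tangent triangles the precise commutative square to which Lemma \ref{lemm:13} applies, and verify that the resulting tight isomorphism matches the two natural connecting morphisms into $C$ and out of $A[-1]$. The careful bookkeeping of shifts, together with the passage from morphisms in $\Db(X)$ to strict morphisms of complexes of hermitian vector bundles, forms the delicate technical content of the proof.
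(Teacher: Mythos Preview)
Your treatment of well-definedness and identities is fine; the paper dismisses these as trivial and you have filled them in reasonably.

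For associativity, however, the paper takes a completely different and much simpler route than your direct cone-manipulation sketch. The paper argues in two steps: first, if the hermitian structures on $f$, $g$, $h$ all arise from fixed hermitian metrics on the tangent bundles $T_X$, $T_Y$, $T_Z$, $T_W$, then Example~\ref{exm:3} gives $\ov g\circ\ov f=\ov{g\circ f}$ (and likewise for the other compositions), so associativity is immediate. Second, the paper observes that by the $\KA$-torsor structure of Theorem~\ref{thm:13}, changing the hermitian structure on any one of $f$, $g$, $h$ by some $\varepsilon\in\KA$ shifts \emph{both} $T_{\ov h\circ(\ov g\circ\ov f)}$ and $T_{(\ov h\circ\ov g)\circ\ov f}$ by the \emph{same} pullback of $\varepsilon$. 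Hence associativity propagates from the special case to all hermitian structures.

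Your approach, by contrast, attempts a direct tight isomorphism between the two iterated hermitian cones via Lemma~\ref{lemm:13}. This is plausible in spirit but incomplete as written: the hermitian cone of a morphism in $\oDb(X)$ (Definition~\ref{def:her_cone}) is not simply the cone of a chosen representative, but carries the correction term $\ocone(\ov E'',\ov E)[1]$, and each of your nested $\ocone$'s hides such a term. You would need to check that these correction terms match up under your proposed isomorphism and that the isomorphism you produce actually represents the identity of $T_{h\circ g\circ f}$ in $\Db(X)$ (cf.\ Remark~\ref{rem:5}); Lemma~\ref{lemm:13} alone, which concerns a single commutative square, does not obviously deliver this octahedral-type identity. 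You acknowledge this as ``the delicate technical content'' without carrying it out. The paper's torsor argument bypasses all of this bookkeeping entirely.
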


\noindent\emph{Proof}
The only non-trivial fact to prove is the associativity of the composition, given by the
following lemma:
\begin{lemma}\label{lem:Sm_cat}
  Let $\ov{f}:X\to Y$, $\ov{g}:Y\to Z$ and $\ov{h}:Z\to W$ be
  projective morphisms together with hermitian structures. Then
  $\ov{h}\circ(\ov{g}\circ\ov{f})=(\ov{h}\circ\ov{g})\circ\ov{f}$.
\end{lemma}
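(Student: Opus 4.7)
The only non-trivial statement is associativity. Since both $\ov h\circ(\ov g\circ\ov f)$ and $(\ov h\circ\ov g)\circ\ov f$ have underlying morphism $h\circ g\circ f$, what I must show is that the two hermitian structures they induce on $T_{hgf}$ coincide, i.e., that $\Id_{T_{hgf}}$ is a tight isomorphism between them.

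Write $A=\Ld f^{\ast}\Ld g^{\ast}T_{\ov h}$, $B=\Ld f^{\ast}T_{\ov g}$ and $C=T_{\ov f}$ in $\oDb(X)$. Unfolding Definition~\ref{def:16} and using Theorem~\ref{thm:9} to commute $\Ld f^{\ast}$ with hermitian cones, one has
\begin{align*}
T_{\ov h\circ(\ov g\circ\ov f)} &= \ocone\bigl(A[-1]\dra \ocone(B[-1]\dra C)\bigr),\\
T_{(\ov h\circ\ov g)\circ\ov f} &= \ocone\bigl(\ocone(A[-1]\dra B)[-1]\dra C\bigr),
\end{align*}
where the inner morphisms $A[-1]\dra B$ and $B[-1]\dra C$ in $\Db(X)$ come from the tangent distinguished triangles of $hg$ (pulled back along $f$) and of $gf$, while the outer morphisms are the ones that, under the canonical identifications $\ocone(B[-1]\dra C)\simeq T_{gf}$ and $\ocone(A[-1]\dra B)\simeq f^{\ast}T_{hg}$, correspond to the arrows in the tangent distinguished triangle of $hgf$.

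My plan is to present both sides as iterated hermitian cones built from a single configuration of chain maps and then rearrange them via Lemma~\ref{lemm:13}. Choose representatives $\ov E_A,\ov E_B,\ov E_C\in\oV(X)$, and use Lemma~\ref{lemma:morphisms_Db} (possibly after tight enlargements) to realise $A[-1]\dra B$ and $B[-1]\dra C$ by honest chain maps $a$ and $b$. The octahedral compatibility of the tangent triangles of $gf$, $hg$ and $hgf$ forces the composite $b[1]\circ a$ to vanish in $\Db(X)$, hence to be null-homotopic on representatives; fix such a null-homotopy $s$. The datum $(a,b,s)$ then realises, on the chain level, both of the derived-category lifts appearing as outer arrows in the two formulas above. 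Applying the rearrangement isometry~\eqref{eq:53} of Lemma~\ref{lemm:13} to the homotopy-commutative square
$$
\xymatrix{\ov E_A[-1]\ar[r]^{a}\ar[d] & \ov E_B\ar[d] \\ 0\ar[r] & \ocone(b)}
$$
with homotopy $s$ yields a natural isometry of iterated cones; after cancelling the trivial summands $\ocone(\ov E_A[-1]\to 0)=\ov E_A$ and $\ocone(0\to\ocone(b))=\ocone(b)$, one obtains a chain map $T_{\ov h\circ(\ov g\circ\ov f)}\to T_{(\ov h\circ\ov g)\circ\ov f}$. By the acyclic calculus of Theorem~\ref{thm:7} its hermitian cone is meager, and by construction it represents $\Id_{T_{hgf}}$ in $\Db(X)$; hence $\Id_{T_{hgf}}$ is a tight isomorphism by Theorem~\ref{thm:10}\ref{item:51}, completing the proof.

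The principal obstacle is in the preparatory step: choosing $s$ so that it serves simultaneously for both octahedral lifts of the outer arrows, and verifying that the leftover summands produced by the Lemma~\ref{lemm:13} rearrangement cancel up to meager complexes rather than merely up to tight isomorphism. Once this set-up is fixed, the remaining verifications reduce to routine manipulations in the semigroup $\oV(X)/\mathscr{M}$.
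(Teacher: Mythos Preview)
Your direct chain-level approach is genuinely different from the paper's, and the obstacle you flag at the end is real and unresolved; as written this is a sketch, not a proof. The paper sidesteps the entire cone-rearrangement computation: it first checks associativity in the special case where the hermitian structures on $f,g,h$ are induced by fixed metrics on $T_X,T_Y,T_Z,T_W$ (this is Example~\ref{exm:3}, so both sides equal $\ov{h\circ g\circ f}$), and then uses the $\KA$-torsor structure of Theorem~\ref{thm:13} to propagate from that special case to arbitrary hermitian structures. Concretely, if associativity holds for $(\ov f,\ov g,\ov h)$ and one replaces, say, $T_{\ov g}$ by $T_{\ov g}+\varepsilon$ with $\varepsilon\in\KA(Y)$, then each side of the associativity identity shifts by exactly $f^{\ast}\varepsilon$, so associativity persists. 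This two-step reduction requires no choice of homotopy $s$, no octahedral bookkeeping, and no cancellation of leftover summands.

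The specific gap in your argument is twofold. First, the homotopy $s$ must be chosen so that the two outer morphisms it produces really are the ones coming from the tangent octahedron of $hgf$; you assert this but do not check it, and different choices of $s$ can give different morphisms in $\Db(X)$ (Lemma~\ref{lemm:13} only controls this up to higher homotopy). Second, and more seriously, even if the acyclic calculus shows the hermitian cone of your comparison map is meager, that alone does not prove tightness of $\Id_{T_{hgf}}$: as Remark~\ref{rem:5} warns, one must verify that the chain map produced by the rearrangement actually represents $\Id_{T_{hgf}}$ in $\Db(X)$, not merely some isomorphism. You say ``by construction'' it does, but this is exactly the content that needs to be supplied, and it is where the interaction with the octahedral axiom becomes delicate.
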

\begin{proof}
  First of all we observe that if the hermitian structures on
  $\ov{f}$, $\ov{g}$ and $\ov{h}$ come from fixed hermitian metrics on
  $T_{X}$, $T_{Y}$, $T_{Z}$ and $T_{W}$, Example \ref{exm:3} ensures
  that the proposition holds. For the general case, it is enough to
  see that if the proposition holds for a fixed choice of hermitian
  structures $\ov{f}$, $\ov{g}$, $\ov{h}$, and we change the metric on
  $f$, $g$ or $h$, then the proposition holds for the new choice of
  metrics. We treat, for instance, the case when we change the
  hermitian structure on $g$, the proof of the other cases being analogous. 
  Denote by $\ov{g}'$ the new hermitian structure on
  $g$. Then there exists a unique class $\varepsilon\in\KA(Y)$ such
  that $T_{\ov{g}'}=T_{\ov{g}}+\varepsilon$. According to the
  definitions, we have
  \begin{displaymath}
    T_{\ov{h}\circ(\ov{g}'\circ\ov{f})}=
    \ocone((g\circ
    f)^{\ast}T_{\ov{h}}[-1],\ocone(f^{\ast}(T_{\ov{g}}+\varepsilon)[-1],T_{\ov{f}}))
    =T_{\ov{h}\circ(\ov{g}\circ\ov{f})}+f^{\ast}\varepsilon.
  \end{displaymath}
  Similarly, we find
  \begin{displaymath}
    T_{(\ov{h}\circ\ov{g}')\circ\ov{f}}=
    \ocone(f^{\ast}\ocone(g^{\ast}T_{\ov{h}}[-1],T_{\ov{g}})[-1]+
    f^{\ast}(-\varepsilon),
    T_{\ov{f}})
    =T_{(\ov{h}\circ\ov{g})\circ\ov{f}}+f^{\ast}\varepsilon.
  \end{displaymath}
  By assumption,
  $T_{\ov{h}\circ(\ov{g}\circ\ov{f})}=T_{(\ov{h}\circ\ov{g})\circ\ov{f}}$. Hence
  the relations above show
  \begin{displaymath}
    T_{\ov{h}\circ(\ov{g}'\circ\ov{f})}=T_{(\ov{h}\circ\ov{g}')\circ\ov{f}}.
  \end{displaymath}
  This concludes the proofs of Lemma \ref{lem:Sm_cat} and of Theorem \ref{thm:17}.
\end{proof}

Let $f\colon X\to Y$ and $g\colon Y\to Z$ be projective morphisms of
smooth complex varieties. By the definition of composition, hermitian
structures on $f$ and $g$ determine a hermitian structure on $g\circ
f$. Conversely we have the following result.

\begin{lemma}\label{lemm:4}
  Let $\ov {g}$ and $\ov {g\circ f}$ be hermitian structures on $g$
  and $g\circ f$. Then there is a unique hermitian structure $\ov f$ on
  $f$ such that
  \begin{equation}
    \label{eq:85}
    \ov{g\circ f}=\ov g\circ \ov f.
  \end{equation}
\end{lemma}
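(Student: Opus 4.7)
The plan is to exploit the distinguished triangle
\begin{displaymath}
\mathcal{T}\colon\ T_f \dra T_{g\circ f} \overset{v}{\dra} \Ld f^{\ast} T_g \dra T_f[1]
\end{displaymath}
recalled before Definition \ref{def:16}, and to recover the desired hermitian structure $T_{\ov f}$ as a shifted hermitian cone of $v$. For existence, I would view $v$ as a morphism $\ov v\colon T_{\ov{g\circ f}} \dra \Ld f^{\ast} T_{\ov g}$ in $\oDb(X)$ (automatic, since morphisms in $\oDb(X)$ are just morphisms in $\Db(X)$), and set $T_{\ov f} := \ocone(\ov v)[-1]$, which is a hermitian structure on $T_f$ well defined up to tight isomorphism by Theorem \ref{thm:6}. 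Taking $\ov f := (f, T_{\ov f})$, one has by definition $\ov g \circ \ov f = (g\circ f,\ \ocone(\Ld f^{\ast} T_{\ov g}[-1], T_{\ov f}))$. Applying the first tight isomorphism of Proposition \ref{prop:10} to $\ov v$, together with the identity $\ocone(\ov v) = T_{\ov f}[1]$, yields a tight isomorphism $\ocone(\Ld f^{\ast} T_{\ov g}, T_{\ov f}[1]) \dra T_{\ov{g\circ f}}[1]$. Shifting by $[-1]$ and invoking the sign insensitivity of $\ocone$ noted after Lemma \ref{lemm:13} gives a tight isomorphism $\ocone(\Ld f^{\ast} T_{\ov g}[-1], T_{\ov f}) \dra T_{\ov{g\circ f}}$, so \eqref{eq:85} holds.

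For uniqueness, suppose $\ov f_1$ and $\ov f_2$ are both hermitian structures on $f$ satisfying \eqref{eq:85}. By Theorem \ref{thm:13}, $\KA(X)$ acts freely and transitively on the fiber of the forgetful functor over $T_f$, so there is a unique class $A \in \KA(X)$ with $T_{\ov f_2} = T_{\ov f_1} + A$. Representing $A$ by an acyclic complex $\ov M$ and inspecting the explicit formula \eqref{eq:her_cone} for the hermitian cone, one sees that the hermitian cone is additive in its target entry, in the sense that
\begin{displaymath}
\ocone(\Ld f^{\ast} T_{\ov g}[-1], T_{\ov f_1} + A) = \ocone(\Ld f^{\ast} T_{\ov g}[-1], T_{\ov f_1}) + A
\end{displaymath}
as hermitian structures on $T_{g\circ f}$. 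Both sides must equal $T_{\ov{g\circ f}}$ by hypothesis, so freeness of the $\KA(X)$-action forces $A = 0$, that is, $T_{\ov f_1} = T_{\ov f_2}$.

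The main subtlety is that Proposition \ref{prop:10} alone would only recover $T_{\ov f}$ from $T_{\ov{g\circ f}}$ and $T_{\ov g}$ up to a tight isomorphism, which a priori need not be the identity on the underlying complex $T_f$, and hence would not yield uniqueness of the hermitian structure itself. The compatibility of the hermitian cone with the free $\KA(X)$-action converts this potential ambiguity into a single element of $\KA(X)$, which is then forced to vanish by comparing cones; this is the step where existence and uniqueness become genuinely distinct assertions.
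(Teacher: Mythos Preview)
Your proposal is correct and follows the same route as the paper: you identify $T_{\ov f}=\ocone(T_{\ov{g\circ f}},\Ld f^{\ast}T_{\ov g})[-1]$ exactly as the paper does, and the paper's proof is in fact just this one-line assertion without further justification. Your verification of existence via Proposition~\ref{prop:10} and of uniqueness via the free $\KA(X)$-action on hermitian structures simply fills in the details the paper leaves implicit.
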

\begin{proof}
  We have the distinguished triangle
  \begin{displaymath}
    T_{f}\dra T_{g\circ f}\dra f^{\ast}T_{g}\dra T_{f}[1].
  \end{displaymath}
  The unique hermitian structure that satisfies equation
  \eqref{eq:85} is $\ocone(T_{\ov{g\circ
      f}},f^{\ast}T_{\ov{g}})[-1]$.
\end{proof}

\begin{remark}
  By contrast with the preceding result, it is not true in general that
  hermitian structures $\ov f$ and $\ov{g\circ f}$ determine a
  unique hermitian structure $\ov g$ that satisfies equation
  \eqref{eq:85}. For instance, if $X=\emptyset$, then any hermitian
  structure on $g$ will satisfy this equation.
\end{remark}

If $\Sm_{\ast/\CC}$ denotes the category of smooth complex varieties and
projective morphisms and $\mathfrak{F}\colon \oSm_{\ast/\CC}\to
\Sm_{\ast/\CC}$ is the forgetful functor, for any object $X$ we have
that
\begin{align*}
  \Ob \mathfrak{F}^{-1}(X)&=\{X\},\\
  \Hom_{\mathfrak{F}^{-1}(X)}(X,X)&=\KA(X).
\end{align*}

To any arrow $\ov f\colon X\to Y$ in $\oSm_{\ast/\CC}$ we associate a
Todd form
\begin{equation}\label{eq:1}
  \Td(\ov f):=\Td(T_{\ov f})\in \bigoplus_{p}\mathcal{D}^{2p}(X,p).
\end{equation}

The following simple properties of $\Td(\ov f)$ follow directly from
the definitions.

\begin{proposition}
  \begin{enumerate}
  \item Let $\ov f\colon X\to Y$ and $\ov{g}\colon Y\to Z$ be
    morphisms in $\oSm_{\ast/\CC}$. Then
    \begin{displaymath}
      \Td(\ov g\circ\ov f)=f^{\ast}\Td(\ov{g})\bullet\Td(\ov{f}).
    \end{displaymath}
  \item Let $f,f'\colon X\to Y$ be two morphisms in $\oSm_{\ast/\CC}$
    with the same underlying algebraic morphism. There is an
    isomorphism $\ov \theta \colon T_{\ov f}\to T_{\ov f'}$ whose
    Bott-Chern class $\widetilde{\Td}_{m}(\ov\theta)$ satisfies
    \begin{displaymath}
      \dd_{\mathcal{D}}\widetilde {\Td}_{m}(\ov \theta )=
      \Td(T_{\ov f'}) \Td(T_{\ov f})^{-1}-1.
    \end{displaymath}
  \end{enumerate}
\end{proposition}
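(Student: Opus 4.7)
The plan is to set $\varphi := \log \Td$, a well-defined additive genus in the sense of Definition \ref{def:24}, and derive both parts from the behavior of $\varphi$ on tightly distinguished triangles and isomorphisms in $\oDb(X)$, followed by exponentiation.

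For part (i), I would exploit the fact that, by the very definition of composition in $\oSm_{\ast/\CC}$, the hermitian structure $T_{\ov g \circ \ov f}$ is the hermitian cone $\ocone(\Ld f^{\ast} T_{\ov g}[-1], T_{\ov f})$. Consequently, the natural distinguished triangle
$$\ov\tau \colon\quad T_{\ov f} \dra T_{\ov g \circ \ov f} \dra \Ld f^{\ast} T_{\ov g} \dra T_{\ov f}[1]$$
is tightly distinguished by construction. The normalization axiom of Theorem \ref{thm:16} gives $\widetilde{\varphi}(\ov\tau)=0$, and applying $\dd_{\mathcal{D}}$ to any lift (and using $\dd_{\mathcal{D}}^{2}=0$) yields the linear identity $\varphi(T_{\ov g \circ \ov f}) = \varphi(T_{\ov f}) + f^{\ast}\varphi(T_{\ov g})$. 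Should the passage $\widetilde{\varphi}(\ov\tau)=0 \Rightarrow \varphi(\ov\tau)=0$ need reinforcement, the same relation follows from a direct component-wise computation on the explicit representative of the hermitian cone prescribed by Definition \ref{def:her_cone}, in which the contributions of the auxiliary complex cancel. Exponentiating produces $\Td(\ov g \circ \ov f) = f^{\ast}\Td(\ov g) \bullet \Td(\ov f)$.

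For part (ii), since $f$ and $f'$ share the same underlying algebraic morphism, the tangent complexes $T_{f}$ and $T_{f'}$ coincide as objects of $\Db(X)$, and the identity on $T_{f}$ provides a canonical isomorphism $\ov \theta \colon T_{\ov f} \dra T_{\ov f'}$ in $\oDb(X)$. In direct analogy with Theorem \ref{thm:11}, I would define
$$\widetilde{\Td}_{m}(\ov \theta) \;=\; \frac{\exp(\varphi(\ov\theta))-1}{\varphi(\ov \theta)}\, \widetilde{\varphi}(\ov \theta), \qquad \varphi(\ov \theta) := \varphi(T_{\ov f'})-\varphi(T_{\ov f}),$$
where $\widetilde{\varphi}(\ov \theta)$ is the additive Bott-Chern class of an isomorphism supplied by Theorem \ref{theorem:ch_tilde_qiso}. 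Since $\varphi(\ov \theta)$ is a closed form, applying $\dd_{\mathcal{D}}$ and using the differential equation $\dd_{\mathcal{D}}\widetilde{\varphi}(\ov\theta)=\varphi(\ov\theta)$ of Theorem \ref{theorem:ch_tilde_qiso} gives
$$\dd_{\mathcal{D}} \widetilde{\Td}_{m}(\ov \theta) = \exp(\varphi(\ov\theta)) - 1 = \Td(T_{\ov f'})\Td(T_{\ov f})^{-1} - 1,$$
as required.

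I do not anticipate any serious obstacle: the whole argument is a formal transfer of the Bott-Chern calculus for additive genera (Theorems \ref{theorem:ch_tilde_qiso} and \ref{thm:16}) to the multiplicative genus $\Td$ through the exponential. The only mildly delicate point is the implication $\widetilde{\varphi}(\ov \tau)=0 \Rightarrow \varphi(\ov \tau)=0$ in part (i), which is either an immediate consequence of $\dd_{\mathcal{D}}^{2}=0$ or, safely, handled by representative-level computation.
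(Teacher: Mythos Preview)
Your proposal is correct and amounts to the explicit verification the paper leaves to the reader (the paper merely states that both items ``follow directly from the definitions''). Your worry about the implication $\widetilde{\varphi}(\ov\tau)=0\Rightarrow\varphi(\ov\tau)=0$ is unfounded: since $\widetilde{\varphi}(\ov\tau)$ lives in $\widetilde{\mathcal D}^{\,n-1}(X,p)=\mathcal D^{n-1}/\dd_{\mathcal D}\mathcal D^{n-2}$ and the differential equation reads $\dd_{\mathcal D}\widetilde{\varphi}(\ov\tau)=\varphi(\ov\tau)$, the vanishing of the \emph{class} $\widetilde{\varphi}(\ov\tau)$ forces the \emph{form} $\varphi(\ov\tau)$ to vanish, exactly by $\dd_{\mathcal D}^{2}=0$; no representative-level backup is needed.

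One small remark on economy: for part (i) you can bypass Theorem~\ref{thm:16} entirely and compute $\varphi(T_{\ov g\circ\ov f})$ directly on the explicit representative $\ov C(f)$ of Definition~\ref{def:her_cone}. Using $\varphi(\ocone(A,B))=\varphi(B)-\varphi(A)$ and $\varphi(\,\cdot\,[1])=-\varphi(\,\cdot\,)$, the auxiliary terms $\varphi(\ov E'')$ cancel and one is left with $\varphi(T_{\ov f})+f^{\ast}\varphi(T_{\ov g})$. Exponentiating gives the claim. This is presumably what the paper has in mind by ``directly from the definitions''; your route through the tightly distinguished triangle is equally valid but slightly less elementary.
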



\newcommand{\etalchar}[1]{$^{#1}$}
\newcommand{\noopsort}[1]{} \newcommand{\printfirst}[2]{#1}
  \newcommand{\singleletter}[1]{#1} \newcommand{\switchargs}[2]{#2#1}
\providecommand{\bysame}{\leavevmode\hbox to3em{\hrulefill}\thinspace}
\providecommand{\MR}{\relax\ifhmode\unskip\space\fi MR }
\providecommand{\MRhref}[2]{%
  \href{http://www.ams.org/mathscinet-getitem?mr=#1}{#2}
}
\providecommand{\href}[2]{#2}

\end{document}